\newtheorem{thm}{Theorem}[section]
\newtheorem{lem}[thm]{Lemma}
\newtheorem{cor}[thm]{Corollary}
\newtheorem{prop}[thm]{Proposition}
\newtheorem{rem}[thm]{Remark}
\newtheorem{defi}[thm]{Definition}
\theoremstyle{definition}
\newcommand{\bd}[1]{\mathbf{#1}}
\newcommand{\ud}{\mathrm{d}}
\newcommand{\norm}[1]{\left\lVert#1\right\rVert}
\newcommand{\N}{\mathbb{N}}
\newcommand{\T}{\mathbb{T}}
\newcommand{\R}{\mathbb{R}}
\newcommand{\Z}{\mathbb{Z}}
\newcommand{\Q}{\mathbb{Q}}
\newcommand{\MT}{\mathcal{T}}
\newcommand{\MM}{\mathcal{M}}
\newcommand{\MY}{\mathcal{Y}}
\newcommand{\MMM}{\mathfrak{M}}
\makeatletter\@addtoreset{equation}{section}\makeatother %
\titleformat{\section}{\centering\normalsize}{\textsc{\thesection.}}{0.5em}{\textsc}%
\titleformat{\subsection}[runin]{\normalsize}{\textbf{\thesubsection.}}{0.3em}{\textbf}
\begin{document}
\begin{CJK*}{GBK}{song}
\author{Wen-Long Li}
\address{Wen-Long Li, School of Sciences, Hangzhou Dianzi University, Hangzhou 310018, P. R. China}
\email{liwenlongchn@gmail.com}

\title{Variational construction of basic heteroclinic solutions for an Allen--Cahn equation}

\begin{abstract}
We establish a variational construction of minimal and without self-intersection solutions
for an Allen--Cahn equation,
especially for those corresponding to irrational rotation vectors.
These consequences generalize the results of rational cases of Rabinowitz and Stredulinsky 
to irrational cases.
\end{abstract}

\subjclass[2020]{35J20, 37C29, 58E30, 49J99}

\keywords{Moser--Bangert theory; minimization method; heteroclinic solution; Allen--Cahn equation}

\date{\today}

\maketitle



\section{Introduction}\label{sec:1}

In \cite{RS}, Rabinowitz and Stredulinsky studied the following Allen--Cahn equation:
\begin{equation}\label{eq:PDE}
  -\Delta u +F_u(x,u)=0, \quad x\in\R^n, \tag{PDE}
\end{equation}
where $u:\R^n\to \R$ and $F\in C^2(\T^{n+1})$, i.e.,
\begin{equation}\label{eq:F1-F2}
\left\{
  \begin{array}{l}
    F\in C^2 (\R^{n}\times \R),  \\
      \textrm{$F$ is $1$-periodic in $x_1,\cdots,x_n$ and in $u$.}
  \end{array}
\right.
\end{equation}
The study on equation like \eqref{eq:PDE} was first initialed by Moser \cite{moser},
who wanted to obtain a PDE version of Aubry--Mather theory.
Among other things, 
Aubry--Mather theory studies minimal solutions of some $1$ dimensional variational problems.
For \eqref{eq:PDE},
a function $u\in W^{1,2}_{loc}(\R^n)$ is said to be \emph{minimal} 
if for any $\phi\in W^{1,2}_{loc}(\R^n)$ with compact support, 
it holds 
\begin{equation*}
  \int_{\R^n}[L(u+\phi)-L(u)]\ud x\geq 0,
\end{equation*}
where
\begin{equation*}
  L(u):=\frac{1}{2}|\nabla u|^2 +F(x,u).
\end{equation*}
Moser noted that different from Aubry--Mather theory, 
where a minimal solution does not cross with its $\Z^2$ translations,
for higher dimensional spaces, this property fails.
So Moser asked functions to satisfy the property of without self-intersection (WSI for short),
that is
for $u\in C^0(\R^n)$, $u$
satisfies
\begin{equation*}
u> \MT_{\bar{\bd{k}}}u, \quad \textrm{ or } \quad u= \MT_{\bar{\bd{k}}}u, \quad\textrm{ or } \quad
 u< \MT_{\bar{\bd{k}}}u,
\end{equation*}
for any
$\bar{\bd{k}}=(\bd{k},\bar{\bd{k}}_{n+1})\in\Z^{n}\times \Z$ with
$\MT_{\bar{\bd{k}}}u(\cdot):=u(\cdot-\bd{k})+\bar{\bd{k}}_{n+1}$.
In this paper,
we shall denote the elements of $\Z^k$ with $2\leq k\leq n$ 
by bold $\bd{i}, \bd{j}, \bd{k}, \bd{p}, \bd{q}$, etc.,
and the components of $\bd{i}$ by $\bd{i}_1, \cdots, \bd{i}_k$.
Elements of $\Z^{n+1}$ will be denoted by, e.g., $\bar{\bd{i}}=(\bd{i}, \bar{\bd{i}}_{n+1})\in \Z^n\times \Z$.

Similar to Aubry--Mather theory,
Moser defined a rotation vector $\alpha=\alpha(u)\in\R^n$
for any minimal and WSI solution $u$.
\begin{lem}[{cf. \cite[Theorem 1.2]{RS}}]\label{lem:mm1.2}
If $F$ satisfies \eqref{eq:F1-F2}
and $u$ is a solution of \eqref{eq:PDE}
that is minimal and WSI,
then there is an $\alpha=\alpha(u)\in\R^n$ such that
\begin{equation*}
  |u(x)-\alpha\cdot x|
\end{equation*}
is bounded on $\R^n$.
\end{lem}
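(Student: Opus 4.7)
The plan is to extract the vector $\alpha$ from the WSI property by tracking integer translations in the last variable. For each $\bd{k} \in \Z^n$, consider the comparison of $u$ with $\MT_{(\bd{k}, m)} u = u(\cdot - \bd{k}) + m$ as $m$ varies over $\Z$. Since $u$ is everywhere finite, the set $\{m \in \Z : u \geq u(\cdot - \bd{k}) + m \text{ on } \R^n\}$ is neither empty nor all of $\Z$; a short argument (using WSI to promote any local failure of the inequality into a strict global reverse inequality) shows it has the form $(-\infty, m_0(\bd{k})]$ for a unique integer $m_0(\bd{k})$. Maximality of $m_0(\bd{k})$ together with WSI at level $m_0(\bd{k})+1$ then force the pointwise estimate
\[
0 \leq u(x) - u(x - \bd{k}) - m_0(\bd{k}) < 1, \qquad \forall\, x \in \R^n.
\]

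Next, I would verify that $m_0 : \Z^n \to \Z$ is a quasi-morphism. Telescoping the displayed bound once with $\bd{k}$ at $x$ and once with $\bd{j}$ at the shifted point $x - \bd{k}$ yields
\[
u(x) - u(x - \bd{k} - \bd{j}) - m_0(\bd{k}) - m_0(\bd{j}) \in [0, 2)
\]
for all $x$, and comparing with the same bound applied at $\bd{k} + \bd{j}$ produces the integer-valued relation $m_0(\bd{k} + \bd{j}) - m_0(\bd{k}) - m_0(\bd{j}) \in \{0, 1\}$. A standard linearization argument — first applying Fekete's subadditive lemma along each coordinate axis of $\Z^n$ to define $\alpha_i := \lim_{N \to \infty} m_0(N \bd{e}_i)/N$, then iterating the quasi-additive identity at most $n$ times to compare $m_0(\bd{k})$ with $\sum_{i=1}^n m_0(\bd{k}_i \bd{e}_i)$ — produces $\alpha \in \R^n$ with $|m_0(\bd{k}) - \alpha \cdot \bd{k}| \leq C$ for a constant $C$ depending only on $n$.

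Combining this with the pointwise estimate gives $|u(x) - u(x - \bd{k}) - \alpha \cdot \bd{k}| \leq C + 1$ uniformly in $x \in \R^n$ and $\bd{k} \in \Z^n$. Writing an arbitrary $x \in \R^n$ as $x = y + \bd{k}$ with $y \in [0,1)^n$ and $\bd{k} \in \Z^n$, and using that $u$, being a classical solution of \eqref{eq:PDE}, is bounded on the compact cube $[0,1]^n$, I conclude that $|u(x) - \alpha \cdot x|$ is bounded on $\R^n$ as claimed.

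The hardest step is the quasi-morphism linearization: promoting the defect-$1$ additivity of $m_0$ to a genuine linear function on $\Z^n$. For $n = 1$ this is essentially Fekete's lemma, but for $n \geq 2$ one has to check that the coordinate-wise limits $\alpha_i$ fit together coherently, which reduces to the commutativity of $\Z^n$ and a careful iteration of the quasi-additive identity. Everything else — the construction of $m_0$ directly from WSI, and the reduction from integer shifts to arbitrary $x \in \R^n$ via interior bounds — is fairly routine once the rotation vector is in hand.
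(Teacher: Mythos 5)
Your proof is correct and reproduces the standard argument going back to Moser: the cited \cite[Theorem 1.2]{RS} records Moser's result from \cite{moser}, and the present paper does not reprove this lemma but merely quotes it. It is worth noting that your argument uses only the WSI property and continuity of $u$ --- neither minimality nor the equation itself enters --- which is consistent with the way Moser and Bangert attach a rotation vector to every continuous WSI function.
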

Denote by $\mathfrak{M}_{\alpha}$ the set of all minimal and WSI solutions with rotation vector $\alpha$.
Moser showed for any $\alpha\in\R^n$, $\mathfrak{M}_{\alpha}\neq \emptyset$.
For $\alpha\in\Q^n$, this is proved by minimization method.
Other cases are shown by an approximation argument.

Later Bangert \cite{Bangert} studied Moser's problem 
and made a classification for minimal and WSI solutions.
He introduced the so-called second invariants for this purpose.
For a WSI function $u\in C^0 (\R^n)$,
using the property of WSI and the property of $\Z^n$,
Bangert defined an integer $t=t(u)\in\N$ and unit vectors
$\bar{a}^1(u),\cdots, \bar{a}^t(u)\in\R^{n+1}$ satisfying
\begin{equation}\label{eq:lkdkl22d}
\left\{
  \begin{array}{l}
    \bar{a}^1(u)\cdot \bar{\bd{e}}^{n+1}>0,  \\
    \bar{a}^s(u)\in span(\bar{\Lambda}_s) \textrm{ for $2\leq s\leq t$},
  \end{array}
\right.
\end{equation}
where
$\bar{\bd{e}}^{n+1}=(0,\cdots,0,1)\in\R^{n+1}$
and
$$\bar{\Lambda}_s:=\bar{\Lambda}_s(\bar{a}^1(u),\cdots, \bar{a}^{s-1}(u)):=\Z^{n+1}\cap \langle \bar{a}^1(u), \cdots, \bar{a}^{s-1}(u) \rangle ^{\perp}$$
for $2\leq s\leq t+1$.
Here $\langle \bar{a}^1(u), \cdots, \bar{a}^{s-1}(u) \rangle ^{\perp}\subset \R^{n+1}$
is the orthogonal complement of the linear subspace generated by $\bar{a}^1(u), \cdots, \bar{a}^{s-1}(u)$.
The integer $t(u)$ and vectors $\bar{a}^1(u),\cdots, \bar{a}^t(u)$ are called by Bangert the second invariants of $u$.
Set $\bar{\Lambda}_1:=\Z^{n+1}$,
then $t(u), \bar{a}^1(u),\cdots, \bar{a}^t (u)$ are characterized by the following lemma.
\begin{lem}[{cf. \cite[(3.6)-(3.7)]{Bangert}}]\label{prop:dk68956}
\
\begin{enumerate}
  \item $\MT_{\bar{\bd{k}}}u>u$
  if and only if there exists $1\leq s\leq t$ such that $\bar{\bd{k}}\in \bar{\Lambda}_s$ and $\bar{\bd{k}}\cdot \bar{a}^s>0$.
  \item $\MT_{\bar{\bd{k}}}u=u$
  if and only if $\bar{\bd{k}}\in\bar{\Lambda}_{t+1}$.
\end{enumerate}
\end{lem}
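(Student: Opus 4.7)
The plan is to derive the invariants $t(u)$ and $\bar{a}^1(u),\dots,\bar{a}^t(u)$ directly from the order-theoretic content of the WSI condition on $\Z^{n+1}$. Define
\[
P:=\{\bar{\bd{k}}\in\Z^{n+1}:\MT_{\bar{\bd{k}}}u>u\},\quad E:=\{\bar{\bd{k}}:\MT_{\bar{\bd{k}}}u=u\},\quad N:=\{\bar{\bd{k}}:\MT_{\bar{\bd{k}}}u<u\},
\]
so that WSI says $\Z^{n+1}=P\sqcup E\sqcup N$. Using that each shift $\MT_{\bar{\bd{l}}}$ is order-preserving and that $\MT_{\bar{\bd{k}}+\bar{\bd{l}}}=\MT_{\bar{\bd{k}}}\MT_{\bar{\bd{l}}}$, I first verify $P+P\subseteq P$, $E+P\subseteq P$, $N=-P$, and that $E$ is a subgroup of $\Z^{n+1}$. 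Since $\bar{\bd{e}}^{n+1}\in P$ (because $\MT_{\bar{\bd{e}}^{n+1}}u=u+1$), and since $m\bar{\bd{k}}\in E$ with $\bar{\bd{k}}\in P$ would iterate to $m\bar{\bd{k}}\in P$, the quotient $\Z^{n+1}/E$ is torsion-free, and $P$ descends to a translation-invariant strict total order on this free abelian group.

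The invariants are then built by finite induction on the rank of $\bar{\Lambda}_s$. Set $\bar{\Lambda}_1:=\Z^{n+1}$. At step $s$, assume $E\cap\bar{\Lambda}_s\neq\bar{\Lambda}_s$; the central geometric step is to produce a unit vector $\bar{a}^s\in\mathrm{span}_{\R}(\bar{\Lambda}_s)$ with
\[
P\cap\bar{\Lambda}_s=\{\bar{\bd{k}}\in\bar{\Lambda}_s:\bar{a}^s\cdot\bar{\bd{k}}>0\}.
\]
The idea is to let $\bar{a}^s$ be the unit inward normal to a supporting hyperplane at the origin of the closed convex cone $\overline{C_s}\subseteq\mathrm{span}_{\R}(\bar{\Lambda}_s)$ generated by $P\cap\bar{\Lambda}_s$, then verify the description by trichotomy: no lattice point $\bar{\bd{k}}$ on the open positive side can lie in $-P$ (which sits in the opposite half-space), while elements of $E\cap\bar{\Lambda}_s$ must lie in the hyperplane if $\bar{a}^s$ is chosen as the normal to a maximal supporting one. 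Setting $\bar{\Lambda}_{s+1}:=\Z^{n+1}\cap\langle\bar{a}^1,\dots,\bar{a}^s\rangle^{\perp}$, the rank of $\bar{\Lambda}_s$ strictly decreases, so the procedure terminates at some $s=t+1$ with $\bar{\Lambda}_{t+1}\subseteq E$.

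Granted this construction, both conclusions of the lemma are immediate. For (1): if $\MT_{\bar{\bd{k}}}u>u$, take the smallest $s$ with $\bar{\bd{k}}\notin\bar{\Lambda}_{s+1}$; then $s\le t$ because $\bar{\Lambda}_{t+1}\subseteq E$, and $\bar{a}^s\cdot\bar{\bd{k}}>0$ by the step-$s$ description; the converse direction is the same description read backwards. For (2): the terminating condition yields $\bar{\Lambda}_{t+1}\subseteq E$, while any $\bar{\bd{k}}\in E$ must satisfy $\bar{a}^s\cdot\bar{\bd{k}}=0$ for every $1\le s\le t$, since otherwise the first index with $\bar{a}^s\cdot\bar{\bd{k}}\neq 0$ would place $\bar{\bd{k}}$ into $P$ or $-P$. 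The normalisation $\bar{a}^1\cdot\bar{\bd{e}}^{n+1}>0$ follows from $\bar{\bd{e}}^{n+1}\in P\cap\bar{\Lambda}_1$.

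The main obstacle is the half-space claim in the inductive step: this is the only place where WSI is used non-trivially, everything else being algebraic bookkeeping for the induced total order on the finitely generated torsion-free abelian group $\Z^{n+1}/E$. The delicate point is that the vectors $\bar{a}^s$ need not be rational, so one cannot simply invoke a polyhedral argument; instead one argues by a compactness/approximation step on the unit sphere of $\mathrm{span}_{\R}(\bar{\Lambda}_s)$, closely following Bangert's original construction in \cite{Bangert}.
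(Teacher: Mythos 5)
First, note that the paper does not prove this lemma at all: it is quoted verbatim from Bangert (\cite[(3.6)--(3.7)]{Bangert}), so the only possible comparison is with Bangert's construction, which your outline (order structure of the WSI trichotomy on $\Z^{n+1}$, supporting hyperplanes of the cone generated by the positive set, induction on $rank(\bar{\Lambda}_s)$) does follow in spirit. The algebraic bookkeeping ($P+P\subseteq P$, $E$ a subgroup, $N=-P$, torsion-freeness of $\Z^{n+1}/E$, strict decrease of $rank(\bar{\Lambda}_s)$) is correct.

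However, your central claim at step $s$, namely the \emph{equality} $P\cap\bar{\Lambda}_s=\{\bar{\bd{k}}\in\bar{\Lambda}_s:\bar{a}^s\cdot\bar{\bd{k}}>0\}$, is false whenever $s<t$, and if it held at $s=1$ it would force $t=1$ always: indeed it would give $P\cap\bar{\Lambda}_{s+1}=\emptyset$, hence $\bar{\Lambda}_{s+1}\subseteq E$ and termination, contradicting the very existence of heteroclinic solutions with $t\geq 2$ (e.g.\ for $n=1$ a heteroclinic between periodic minimizers has $\bar{a}^1=(0,1)$ and $(\pm 1,0)\in P$, yet these lie in the hyperplane $\{\bar{a}^1\cdot\bar{\bd{k}}=0\}$; no single half-space describes a lexicographic-type order). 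Your own verification sketch in fact only addresses the inclusion $\{\bar{\bd{k}}\in\bar{\Lambda}_s:\bar{a}^s\cdot\bar{\bd{k}}>0\}\subseteq P$, and the forward implication of (1) in your last step ("$\bar{a}^s\cdot\bar{\bd{k}}>0$ by the step-$s$ description") rests on the false reverse inclusion. The repair is short but must be made: only claim and prove the one-sided inclusion (open positive side consists of $P$-points; the $E$-points lie in the hyperplane because $\bar{\bd{p}}+m\bar{\bd{e}}\in P$ for all $m\in\Z$ when $\bar{\bd{p}}\in P\cap\bar{\Lambda}_s$, $\bar{\bd{e}}\in E\cap\bar{\Lambda}_s$, so $\bar{a}^s\cdot\bar{\bd{e}}=0$ by letting $m\to\pm\infty$), and then obtain the forward direction of (1) by symmetry: if $\bar{\bd{k}}\in P\cap(\bar{\Lambda}_s\setminus\bar{\Lambda}_{s+1})$ had $\bar{a}^s\cdot\bar{\bd{k}}<0$, then $-\bar{\bd{k}}$ would lie on the open positive side, hence in $P$, contradicting trichotomy. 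Two smaller points: you should verify that the closed convex cone generated by $P\cap\bar{\Lambda}_s$ is proper in $span(\bar{\Lambda}_s)$ (otherwise no supporting hyperplane exists; this follows because $0$ would otherwise be a positive integer combination of elements of $P$, hence in $P$), and the normalization $\bar{a}^1\cdot\bar{\bd{e}}^{n+1}>0$ does not follow merely from $\bar{\bd{e}}^{n+1}\in P$ (that gives only $\geq 0$); you need the observation that for every $\bd{k}\in\Z^n$ one has $(\bd{k},j)\in P$ for $j$ large, which is incompatible with $\bar{a}^1\perp\bar{\bd{e}}^{n+1}$.
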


Denote all the minimal and WSI solutions 
with the same second invariants $t$ and $\bar{a}^1,\cdots, \bar{a}^t$ by
$\mathfrak{M}(\bar{a}^1,\cdots, \bar{a}^t)$.
For a minimal and WSI solution $u$,
Moser's rotation vector $\alpha=\alpha(u)$ and
Bangert's invariant $\bar{a}^1=\bar{a}^1(u)$ satisfy the following relation
$$\bar{a}^1=\frac{(-\alpha,1)}{(\alpha_1 ^2+\cdots \alpha_n ^2 +1)^{\frac{1}{2}}}.$$
If $t\geq 2$, we shall call the elements in $\mathfrak{M}(\bar{a}^1,\cdots, \bar{a}^t)$ 
basic heteroclinic solutions.
Motivated by the above facts, Bangert introduced the following definition.
\begin{defi}[cf. {\cite[Definition 3.9]{Bangert}}]\label{def:admissible}
A system of $(\bar{a}^1,\cdots, \bar{a}^t)$ of unit vectors in $\R^{n+1}$ is admissible
if $\bar{a}^1,\cdots, \bar{a}^t\in\R^{n+1}$ satisfy \eqref{eq:lkdkl22d}.
\end{defi}
It is obvious that if $(\bar{a}^1,\cdots, \bar{a}^t)$ is admissible, then for all $1\leq s\leq t$,
$(\bar{a}^1,\cdots, \bar{a}^s)$ is admissible.
Assuming some gap conditions,
Bangert showed that for any admissible system $\bar{a}^1,\cdots, \bar{a}^t$,
$\mathfrak{M}(\bar{a}^1,\cdots, \bar{a}^t)\neq \emptyset$.
Moreover, if $\bar{a}^1,\cdots, \bar{a}^t$ is admissible,
then
\begin{equation}\label{eq:ldffdfddf}
  \mathfrak{M}(\bar{a}^1)\cup \mathfrak{M}(\bar{a}^1, \bar{a}^2) \cup
  \cdots\cup \mathfrak{M}(\bar{a}^1,\cdots, \bar{a}^t)(\subset \mathfrak{M}_{\alpha}) \textrm{ is ordered.}
\end{equation}
Bangert's proofs of these results depend on careful studies of the structure of
$\mathfrak{M}(\bar{a}^1,\cdots, \bar{a}^t)$,
in which ordered relation plays an important role.

For the rational rotation vector $\alpha\in\Q^n$,
Rabinowitz and Stredulinsky \cite{RS} developed a minimization method and gave the variational
construction for Bangert's basic heteroclinic solutions.
Then by this variational structure and
using these minimal and WSI solutions as building blocks,
they constructed a large family of complicated homoclinic and heterocolinic solutions for \eqref{eq:PDE}.
So naturally we ask:
can one generalize Rabinowitz and Stredulinsky's results to the case of $\alpha\in\R^n\setminus \Q^n$?

The answer is affirmative and it is the aim of the present paper.
More precisely, in this paper
we shall use variational method to give a variational construction for Bangert's basic heteroclinic solutions,
especially for those 
corresponding to irrational rotational vectors.
Then Rabinowitz and Stredulinsky's arguments can be applied 
and more solutions of \eqref{eq:PDE} will be obtained 
(cf. \cite{RS}, see also \cite{Rabi2006, Rabi2007, Rabi2011, Rabi2014}).
The idea of our method is similar to that of Rabinowitz and Stredulinsky,
and is also inspired by \cite{Mather, dev2007}.
The key point is 
we observe that the functional $G_{\omega}$ (cf. \cite[p. 190]{Mather}, see also \cite[(28)]{dev2007})  
can be used to generalize the renormalized functionals of Rabinowitz and Stredulinsky.

In \cite[Section 7]{moser}, 
Moser proposed an alternate variational principle to obtain minimal and WSI solutions of
\eqref{eq:PDE}.
This approach is based on Mather's method (\cite{Mather1985}).
Bessi (\cite{bessi}) completed this construction.
But it seems not clear how to establish 
more complicated solutions of Rabinowitz and Stredulinsky 
by Bessi's result.


\subsection*{Sketch of the results}
\

Let us briefly state our results of the present paper.
The main results will be stated in Section \ref{sec:3}.
The main results concern the variational construction of Bangert's basic heteroclinic solutions.
Unlike Bangert's method,
our approach is a combination of the methods of Moser (\cite{moser}) and Rabinowitz--Stredulinsky (\cite{RS}),
and we shall ``build-up'' all the minimal and WSI solutions of \eqref{eq:PDE}.
We outline our approach and construction here. 

\noindent
\textbf{Step 1:} the construction of $\MMM(\bar{a}_1)$
\begin{enumerate}[(I)]
\item If $\alpha\in\Q^n$,
by Moser's minimization method,
one have a periodic function $u\in C^2(\R^n)$ such that
$u+\alpha\cdot x$ is a solution of \eqref{eq:PDE}.
In Bangert's notation,
$u+\alpha\cdot x\in\MMM(\bar{a}^1)$ with
$\bar{a}^1=(-\alpha,1)/\norm{(-\alpha,1)}$ and $\norm{(-\alpha,1)}:=(\alpha_1 ^2 + \cdots \alpha_n ^2 +1)^{1/2}$.
Moreover, if $\alpha\in\Q^n$, 
all the elements of $\MMM(\bar{a}^1)$ are periodic and are minimizers of the corresponding functional.
See \cite[Theorem 3.60]{RS} for the details.
\item \label{step1-II}
If $\alpha\not\in\Q^n$,
Moser showed there is some $u\in\MMM(\bar{a}^1)$ by approximation method, and $u$ can be chosen such that $\MT_{\bar{\bd{k}}}u=u$ for all $\bar{\bd{k}}\in \bar{\Lambda}_2:=\bar{\Lambda}_2(\bar{a}^1):=\Z^{n+1}\cap \langle \bar{a}^1 \rangle ^{\perp}$.
Define
$$\MMM_1(u):=\textrm{closure of }\{\MT_{\bar{\bd{k}}}u\,|\, \bar{\bd{k}}\in \bar{\Lambda}_1\},$$
where the closure is taken with respect to $C^1$-convergence on compact sets.
Then there is a unique minimal set, denoted by $\MMM^{rec}(\bar{a}^1)$, contained in $\MMM_1(u)$.
Bangert's uniqueness theorem (\cite{Bangert}) tells us that
$\MMM^{rec}(\bar{a}^1)$ is independent of $u$.
In fact,
\begin{equation*}
  \MMM^{rec}(\bar{a}^1)
  =\{u\in \MMM(\bar{a}^1)\,|\, u=\sup_{\bar{\bd{k}}\in\bar{\Lambda}_1 \atop \bar{\bd{k}}\cdot \bar{a}^1<0}\MT_{\bar{\bd{k}}}u \,\,\textrm{ or }\,\, u=\inf_{\bar{\bd{k}}\in\bar{\Lambda}_1 \atop \bar{\bd{k}}\cdot \bar{a}^1>0}\MT_{\bar{\bd{k}}}u\}.
\end{equation*}

Two cases may occur:
either (a) $\{u(\bd{0})\,|\, u\in\MMM^{rec}(\bar{a}^1)\}=\R$
or (b) $\{u(\bd{0})\,|\, u\in\MMM^{rec}(\bar{a}^1)\}\subset\R$ is a Cantor set.
If (a) (resp. (b)) holds, we say $\MMM^{rec}(\bar{a}^1)$ is a foliation (resp. lamination).
If (a) occurs, $\MMM^{rec}(\bar{a}^1)=\MMM(\bar{a}^1)$.
Then there does not exist basic heteroclinic solution with invariants $t\geq 2$
and $\bar{a}^1, \cdots, \bar{a}^t$ for any admissible $(\bar{a}^1, \cdots, \bar{a}^t)$.
Next we focus on case (b).

In case (b),
as Bangert pointed out at the end of Section 9 in \cite{dynarep},
(for $n=1$) 
there may be some element in $\MMM(\bar{a}^1)$
which is non-recurrent.
To obtain all the minimal and WSI solutions,
we need to construct the elements in $\MMM(\bar{a}^1)\setminus\MMM^{rec}(\bar{a}^1)$.
Note that $\MMM^{rec}(\bar{a}^1)$ is an ordered set.
Now assume the following gap condition:
\begin{equation}\label{eq:*00}
\textrm{$v_0<w_0\in \mathfrak{M}^{rec}(\bar{a}^1)$ are an adjacent pair}. \tag{$*_0$}
\end{equation}
Here $v_0<w_0$ are adjacent in $\MMM^{rec}(\bar{a}^1)$ means there
does not exist $u\in\MMM^{rec}(\bar{a}^1)\setminus\{v_0, w_0\}$ such that $v_0\leq u\leq w_0$.
With the gap condition \eqref{eq:*00},
we can construct $u\in\MMM(\bar{a}^1)\setminus\MMM^{rec}(\bar{a}^1)$ 
lying between $v_0$, $w_0$ as follows.
For $u\in \Gamma_1(v_0, w_0):=\{\bar{u}\in W^{1,2}_{loc}(\R^n)\,|\, v_0\leq \bar{u}\leq w_0, \textrm{ and } \MT_{\bar{\bd{k}}}\bar{u}=\bar{u} \textrm{ for } \bar{\bd{k}}\in \bar{\Lambda}_2\}$,
set $$J_1(u)=\int_{\tilde{\T}}[L(u)-L(v_0)]\ud x,$$
where $\tilde{\T}$ is a fundamental domain of $\R^{n}/\Lambda_2$, and
$\Lambda_2:=\pi(\bar{\Lambda}_2)$ with $\pi : \R^{n+1}\to \R^n$ is defined by $\pi((x,x_{n+1}))=x$.
It can be proved (see Section \ref{sec:3}) that
$\min_{u\in \Gamma_1(v_0, w_0)}J_1(u)=0$ is attained and
$$\{u\in \Gamma_1(v_0, w_0)\,|\, J_1(u)=0\}=\MMM(\bar{a}^1)\cap \Gamma_1(v_0, w_0).$$
Thus the elements of $\MMM(\bar{a}^1)\cap \Gamma_1(v_0, w_0)$
have a variational construction.

Now for any fixed $u\in\MMM(\bar{a}^1)$, 
does it lie in some adjacent pair of $\MMM^{rec}(\bar{a}^1)$ provided $u\not\in\MMM^{rec}(\bar{a}^1)$? Yes!
Since $u\in\MMM(\bar{a}^1)$ implies $u$ is WSI,
we have $\MMM_1(u)$ is ordered.
Thus $\MMM^{rec}(\bar{a}^1)\cup \{u\}$ is also ordered since $\MMM^{rec}(\bar{a}^1)\cup \{u\}\subset \MMM_1(u)$.
Hence either $u\in \MMM^{rec}(\bar{a}^1)$ or there exist
adjacent pair $v_0<w_0\in \MMM^{rec}(\bar{a}^1)$,
such that
$u$ lies in $\Gamma_1(v_0,w_0)$.
\end{enumerate}

\begin{rem}
Note that we have used Bangert's uniqueness theorem (i.e., $\MMM^{rec}(\bar{a}^1)$ 
is independent of $u\in\MMM(\bar{a}^1)$),
and we do not need ``$\MMM(\bar{a}^1)$ is ordered''.
In fact, our construction provides another proof for the fact ``$\MMM(\bar{a}^1)$ is ordered''.
\end{rem}

\noindent
\textbf{Step 2:} the construction of $\MMM(\bar{a}^1,\bar{a}^2)$

Assuming $\bar{a}^1, \bar{a}^2$ are admissible,
let us consider the basic heteroclinic solutions in $\MMM(\bar{a}^1,\bar{a}^2)$.
Note here we do not require $\alpha\in\Q^n$.

If $rank(\bar{\Lambda}_3)=rank(\bar{\Lambda}_2)-1$,
using the variational construction in Step 1,
Rabinowitz and Stredulinsky's method can be generalized to this case and obtain basic heteroclinic solutions.
The key point is that the definition of the second functional only depends on the first functional.

If $rank(\bar{\Lambda}_3)<rank(\bar{\Lambda}_2)-1$,
we can repeat the method of step 1 \eqref{step1-II} to obtain $\MMM^{rec}(\bar{a}^1, \bar{a}^2)$.
Then using minimization method we get elements in $\MMM(\bar{a}^1, \bar{a}^2)\setminus\MMM^{rec}(\bar{a}^1, \bar{a}^2)$.

The above program can be repeated to obtain all the basic heteroclinic solutions of \eqref{eq:PDE}.

\bigskip

The rest of the paper is organized as follows.
In Section \ref{sec:2} some preliminaries are provided.
In Section \ref{sec:3},
we define the first functional whose minimizers are elements in $\MMM(\bar{a}^1)$.
In Section \ref{sec:4} basic heteroclinic solutions in $\MMM(\bar{a}^1,\bar{a}^2)$ are constructed by minimization method.


\section{Preliminary}\label{sec:2}

We recall some results of Bangert \cite{Bangert, bangert-uniq} and Rabinowitz--Stredulinsky \cite{RS}, which are fundamental for our analysis.

Assume $(\bar{a}^1,\cdots, \bar{a}^t)$ are admissible.
Recall that $\bar{\Lambda}_1:=\Z^{n+1}$ and
$\bar{\Lambda}_s$ ($2\leq s\leq t+1$) are defined as in \eqref{eq:lkdkl22d}, i.e.,
$\bar{\Lambda}_s:=\bar{\Lambda}_s(\bar{a}^1,\cdots, \bar{a}^s):=\Z^{n+1}\cap \langle \bar{a}^1, \cdots, \bar{a}^{s-1} \rangle ^{\perp}$.
For $1\leq s \leq t$,
set
\begin{equation}\label{eq:133434134}
  \MMM^{rec}(\bar{a}^1,\cdots,\bar{a}^s)
  :=\{u\in \MMM(\bar{a}^1, \cdots,\bar{a}^s)\,|\, u=\sup_{\bar{\bd{k}}\in\bar{\Lambda}_s \atop \bar{\bd{k}}\cdot \bar{a}^s<0}\MT_{\bar{\bd{k}}}u \,\,\textrm{ or }\,\, u=\inf_{\bar{\bd{k}}\in\bar{\Lambda}_s \atop \bar{\bd{k}}\cdot \bar{a}^s>0}\MT_{\bar{\bd{k}}}u\}.
\end{equation}
The remark following \cite[Definition 5.2]{Bangert} shows that 
if $rank(\bar{\Lambda}_{s+1})= rank(\bar{\Lambda}_{s})-1$, 
then $\MMM^{rec}(\bar{a}^1,\cdots,\bar{a}^s)=\emptyset$.
For $1\leq s \leq t$,
assume $u\in \MMM(\bar{a}^1,\cdots,\bar{a}^{t})$.
Define
$$\MMM_t(u):=\textrm{closure of }\{\MT_{\bar{\bd{k}}}u\,|\, \bar{\bd{k}}\in \bar{\Lambda}_t\},$$
where the closure is taken with respect to $C^1$-convergence on compact sets.
For any functions $v<w$, denote by $(v,w)$ as the set
$\{(x,x_{n+1})\,|\, v(x)< x_{n+1}< w(x)\}$.
Bangert showed the following deep results.

\begin{lem}[{cf. \cite[Theorem 5.1, (3.8)-(3.9)]{bangert-uniq}, \cite[Lemma 6.14, Proposition 5.5, Lemma 5.1]{Bangert}}]\label{lem:659712}
\
\begin{enumerate}
  \item If $0\leq rank(\bar{\Lambda}_{2})\leq rank(\bar{\Lambda}_{1})-2$, $\MMM^{rec}(\bar{a}^1)$ is totally ordered. The set $\{u(\bd{0})\,|\, u\in\MMM^{rec}(\bar{a}^1)\}$ is either a Cantor set, or $\R$.
  \item Suppose $\MMM(\bar{a}^1,\cdots,\bar{a}^{t-1})$ is totally ordered and $t>1$. Then $\MMM^{rec}(\bar{a}^1,\cdots,\bar{a}^t)$ is totally ordered. And for any adjacent $v<w\in \MMM(\bar{a}^1,\cdots,\bar{a}^{t-1})$,
      either $\{u(\bd{0})\,|\, u\in\MMM^{rec}(\bar{a}^1,\cdots,\bar{a}^t)\cap (v,w)\}$ is a Cantor set in $(v(\bd{0}),w(\bd{0}))$, or is equal to $(v(\bd{0}),w(\bd{0}))$.
      If $ rank(\bar{\Lambda}_{t+1})= rank(\bar{\Lambda}_{t})-1$, 
      $\MMM_t(u)\cap (v,w)$ is discrete in $(v,w)$ for any $u\in \MMM(\bar{a}^1,\cdots,\bar{a}^t)$.
\end{enumerate}
\end{lem}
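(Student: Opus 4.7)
The plan is to exploit the structure of the subgroup $\{\bar{\bd{k}} \cdot \bar{a}^1 : \bar{\bd{k}} \in \Z^{n+1}\} \subset \R$ (respectively $\{\bar{\bd{k}} \cdot \bar{a}^t : \bar{\bd{k}} \in \bar{\Lambda}_t\}$) and to combine this with the WSI property and the strong maximum principle for \eqref{eq:PDE}. I split the argument into the two parts of the lemma.

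For part (1), I first observe that the rank hypothesis gives $\text{rank}(\bar{\Lambda}_1/\bar{\Lambda}_2) \geq 2$. The linear functional $\bar{\bd{k}} \mapsto \bar{\bd{k}} \cdot \bar{a}^1$ vanishes precisely on $\bar{\Lambda}_2$, so it injects this quotient into $\R$ as an additive subgroup of rank $\geq 2$, which is therefore dense in $\R$ by Kronecker's theorem. Next I show that $\MMM^{rec}(\bar{a}^1)$ is totally ordered. Given $u_1, u_2 \in \MMM^{rec}(\bar{a}^1)$ that cross, the sup/inf characterization \eqref{eq:133434134} allows me to approximate, say, $u_1$ by a sequence of translates $\MT_{\bar{\bd{k}}_j} u_1$ with $\bar{\bd{k}}_j \cdot \bar{a}^1 \to 0^{-}$. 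By WSI each such translate lies strictly below $u_1$ and converges to $u_1$ in $C^1_{\mathrm{loc}}$, so in the limit it touches $u_1$ without crossing. Combining this with the postulated crossing between $u_1$ and $u_2$, and invoking the strong maximum principle for the elliptic equation \eqref{eq:PDE}, one forces a contradiction.

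For the Cantor-or-$\R$ dichotomy, let $\Sigma := \{u(\bd{0}) : u \in \MMM^{rec}(\bar{a}^1)\}$. This set is closed because the class defined by \eqref{eq:133434134} is closed under $C^1_{\mathrm{loc}}$-limits, and it is invariant under translation by the dense additive subgroup $\{\bar{\bd{k}}\cdot\bar{a}^1 : \bar{\bd{k}}\in\Z^{n+1}\} \subset \R$ via the action $\MT_{\bar{\bd{k}}}$. Any closed subset of $\R$ invariant under a dense subgroup is either $\R$ itself or is nowhere dense; in the latter case, the sup/inf characterization in \eqref{eq:133434134} guarantees that every point of $\Sigma$ is a non-isolated limit of translates along $\bar{a}^1$, so $\Sigma$ is perfect and hence Cantor.

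For part (2), the argument mirrors (1) but is localized to the strip $(v,w)$ between an adjacent pair $v<w$ in $\MMM(\bar{a}^1,\dots,\bar{a}^{t-1})$. The hypothesis that $\MMM(\bar{a}^1,\dots,\bar{a}^{t-1})$ is totally ordered allows one to confine the analysis of $\MMM^{rec}(\bar{a}^1,\dots,\bar{a}^t)$ to such a gap; the same WSI plus strong maximum principle technique then yields total ordering within each gap, with translates now taken in $\bar{\Lambda}_t$ and the projection $\bar{\bd{k}}\mapsto\bar{\bd{k}}\cdot\bar{a}^t$. When $\text{rank}(\bar{\Lambda}_{t+1}) = \text{rank}(\bar{\Lambda}_t) - 1$, the quotient $\bar{\Lambda}_t/\bar{\Lambda}_{t+1}$ has rank $1$, so the image of $\bar{\Lambda}_t$ under this projection is a discrete subgroup of $\R$, which immediately yields discreteness of $\MMM_t(u)\cap(v,w)$. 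The main obstacle throughout is the passage from the WSI/maximum-principle comparison of a solution with its own translates to a comparison between two different solutions; this requires a careful compactness/approximation argument whose success hinges on the density (or discreteness) of the relevant projected lattice, together with $C^1_{\mathrm{loc}}$-compactness of bounded families of solutions to \eqref{eq:PDE}.
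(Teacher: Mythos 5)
The paper does not prove this lemma; it is recalled verbatim from Bangert's work (\cite[Theorem 5.1, (3.8)--(3.9)]{bangert-uniq} and \cite[Lemma 6.14, Proposition 5.5, Lemma 5.1]{Bangert}), so there is no in-paper proof to compare against. Judged on its own merits, your sketch has a genuine gap in the Cantor-or-$\R$ dichotomy.

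You assert that $\Sigma := \{u(\bd{0}) : u\in\MMM^{rec}(\bar{a}^1)\}$ is invariant under additive translation by the dense subgroup $G := \{\bar{\bd{k}}\cdot\bar{a}^1 : \bar{\bd{k}}\in\Z^{n+1}\}$. This is false. The induced map on $\Sigma$ is $s\mapsto (\MT_{\bar{\bd{k}}}u_s)(\bd{0}) = u_s(-\bd{k}) + \bar{\bd{k}}_{n+1}$, where $u_s$ is the element of $\MMM^{rec}(\bar{a}^1)$ with $u_s(\bd{0})=s$. This is an order-preserving bijection of $\Sigma$, but it is \emph{not} the rigid translation $s\mapsto s+\bar{\bd{k}}\cdot\bar{a}^1$ (the quantity $u_s(-\bd{k})+\bar{\bd{k}}_{n+1}-u_s(\bd{0})$ depends on $s$). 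Indeed, if $\Sigma$ really were invariant under the dense additive group $G$, then for any $s\in\Sigma$ the dense set $s+G$ would lie in $\Sigma$, and closedness would force $\Sigma=\R$, making the Cantor alternative impossible. So the Kronecker-type "closed set invariant under a dense subgroup" argument cannot give the dichotomy. What is true is that the $\Z^{n+1}$-action on $\MMM^{rec}(\bar{a}^1)$ (equivalently, on $\Sigma$) is by order-preserving homeomorphisms semi-conjugate to the linear action $s\mapsto s+\bar{\bd{k}}\cdot\bar{a}^1$; the dichotomy is then a Denjoy-type statement about minimal sets for such monotone $\Z^{n+1}$-actions, and that is the substantial content of Bangert's theorem, not something that falls out of a density-of-the-lattice observation.

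Two further remarks. Your total-ordering argument (approximating $u_1$ by translates with $\bar{\bd{k}}_j\cdot\bar{a}^1\to 0^-$, then combining with a crossing between $u_1$ and $u_2$ via the strong maximum principle) is only a sketch of one ingredient; the actual step of converting "comparison of $u_1$ with its own translates" into "comparison of $u_1$ with $u_2$" is precisely the uniqueness theorem of \cite{bangert-uniq}, which requires a genuinely separate argument. By contrast, your treatment of the discreteness claim in part (2) (using that $\bar{\Lambda}_t/\bar{\Lambda}_{t+1}$ has rank $1$, so the image of $\bar{\Lambda}_t$ under $\bar{\bd{k}}\mapsto\bar{\bd{k}}\cdot\bar{a}^t$ is a discrete subgroup of $\R$) is the correct reasoning; it still needs the asymptotic information of Lemma~\ref{lem:bangertdkd} to rule out accumulation of translates inside $(v,w)$, but the structural idea is right.
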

\begin{lem}[cf. {\cite[Corollary 6.21]{Bangert}}]\label{lem:dlffdf}
For $1\leq s \leq t$ and any $u\in \MMM(\bar{a}^1,\cdots,\bar{a}^{t})$,
$$\MMM^{rec}(\bar{a}^1,\cdots,\bar{a}^t)\subset \MMM_t(u).$$
Since $u$ is WSI, $\MMM^{rec}(\bar{a}^1,\cdots,\bar{a}^t)\cup \{u\}$ is ordered.
\end{lem}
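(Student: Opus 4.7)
The plan is to establish $\MMM_t(u)$ as a closed, $\bar{\Lambda}_t$-invariant, totally ordered subset of $\MMM(\bar{a}^1,\ldots,\bar{a}^t)$, and then identify $\MMM^{rec}(\bar{a}^1,\ldots,\bar{a}^t)$ as a $\bar{\Lambda}_t$-minimal set sitting inside it. Preparatorily I would verify: (i) $\MMM_t(u)\subset\MMM(\bar{a}^1,\ldots,\bar{a}^t)$, because translations by $\bar{\bd{k}}\in\bar{\Lambda}_t$ preserve minimality, WSI, and all second invariants, and $C^1_{loc}$-limits of minimal WSI solutions with prescribed invariants remain in this class (elliptic regularity applied to the bounded perturbation $u-\alpha\cdot x$ from Lemma \ref{lem:mm1.2}, whose boundedness is preserved under $\bar{\Lambda}_t$-shifts since $\bar{\bd{k}}\cdot\bar{a}^1=0$) combined with preservation of the trichotomy under non-strict limits; and (ii) $\MMM_t(u)$ is totally ordered, because for any two limit points $v_i=\lim_n\MT_{\bar{\bd{k}}_n^{(i)}}u$ the translates $\MT_{\bar{\bd{k}}_n^{(1)}}u$ and $\MT_{\bar{\bd{k}}_n^{(2)}}u$ are ordered by WSI applied to $\MT_{\bar{\bd{k}}_n^{(1)}-\bar{\bd{k}}_n^{(2)}}u$ vs.\ $u$, and this ordering persists in the limit.

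The key structural fact is that $\MMM^{rec}(\bar{a}^1,\ldots,\bar{a}^t)$ is a $\bar{\Lambda}_t$-minimal set in $C^1_{loc}$, i.e., the $C^1_{loc}$-closure of the $\bar{\Lambda}_t$-orbit of any single element of $\MMM^{rec}(\bar{a}^1,\ldots,\bar{a}^t)$ equals all of $\MMM^{rec}(\bar{a}^1,\ldots,\bar{a}^t)$. Granting this, I would first exhibit a recurrent element inside $\MMM_t(u)$ via Zorn's lemma on non-empty closed $\bar{\Lambda}_t$-invariant subsets of $\MMM_t(u)$ (which is compact in $C^1_{loc}$): a minimal such subset consists of points satisfying the sup or inf characterization \eqref{eq:133434134}, hence lies in $\MMM^{rec}(\bar{a}^1,\ldots,\bar{a}^t)$. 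Pick any such $v_*\in\MMM^{rec}(\bar{a}^1,\ldots,\bar{a}^t)\cap\MMM_t(u)$; for an arbitrary $v\in\MMM^{rec}(\bar{a}^1,\ldots,\bar{a}^t)$, $\bar{\Lambda}_t$-minimality supplies $\bar{\bd{k}}_n\in\bar{\Lambda}_t$ with $\MT_{\bar{\bd{k}}_n}v_*\to v$ in $C^1_{loc}$; since $\MT_{\bar{\bd{k}}_n}v_*\in\MMM_t(u)$ by $\bar{\Lambda}_t$-invariance and $\MMM_t(u)$ is closed, $v\in\MMM_t(u)$.

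The main obstacle is the $\bar{\Lambda}_t$-minimality of $\MMM^{rec}(\bar{a}^1,\ldots,\bar{a}^t)$, which is essentially Bangert's uniqueness theorem for the recurrent set: its proof uses the total ordering of $\MMM^{rec}(\bar{a}^1,\ldots,\bar{a}^t)$ (Lemma \ref{lem:659712}), the sup/inf characterization of recurrence, and a delicate adjacency/approximation argument to preclude two disjoint orbit closures. With the main inclusion in hand, the final assertion that $\MMM^{rec}(\bar{a}^1,\ldots,\bar{a}^t)\cup\{u\}$ is ordered is immediate, since $u=\MT_{\bar{\bd{0}}}u\in\MMM_t(u)$ and $\MMM_t(u)$ is totally ordered by (ii).
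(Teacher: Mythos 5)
There is a genuine gap, and in fact two of the concrete steps you do spell out are wrong. First, your preparatory claim (i) that $\MMM_t(u)\subset\MMM(\bar{a}^1,\cdots,\bar{a}^t)$ fails: by Bangert's structure result quoted in the paper as Lemma \ref{lem:bangertdkd}, translates $\MT_{\bar{\bd{k}}^i}u$ with $\bar{\bd{k}}^i\in\bar{\Lambda}_t$ and $\bar{\bd{k}}^i\cdot\bar{a}^t\to\pm\infty$ converge to limits $u^{\pm}$ which lie in $\MMM(\bar{a}^1,\cdots,\bar{a}^{t-1})$, i.e.\ have only $t-1$ second invariants; only limits along sequences with $\bar{\bd{k}}^i\cdot\bar{a}^t$ bounded stay in $\MMM(\bar{a}^1,\cdots,\bar{a}^t)$. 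Second, and more damaging, your Zorn's lemma step does not deliver a point of $\MMM^{rec}(\bar{a}^1,\cdots,\bar{a}^t)$: since $t(u^{\pm})=t-1$, Lemma \ref{prop:dk68956} gives $\MT_{\bar{\bd{k}}}u^{\pm}=u^{\pm}$ for every $\bar{\bd{k}}\in\bar{\Lambda}_t$, so $\{u^{+}\}$ and $\{u^{-}\}$ are closed, $\bar{\Lambda}_t$-invariant, minimal singletons inside $\MMM_t(u)$ which are disjoint from $\MMM^{rec}(\bar{a}^1,\cdots,\bar{a}^t)$ (membership in \eqref{eq:133434134} requires belonging to $\MMM(\bar{a}^1,\cdots,\bar{a}^t)$). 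Hence ``a minimal closed invariant subset consists of points satisfying the sup/inf characterization'' is false as stated, and your argument never produces the element $v_*\in\MMM^{rec}(\bar{a}^1,\cdots,\bar{a}^t)\cap\MMM_t(u)$ on which everything else rests.

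Beyond these two errors, the ingredient you label the ``key structural fact'' --- that $\MMM^{rec}(\bar{a}^1,\cdots,\bar{a}^t)$ is exactly the $\bar{\Lambda}_t$-orbit closure of any one of its elements --- is not proved in your proposal; you correctly identify it as Bangert's uniqueness theorem, but that theorem (together with the analysis relating the recurrent set to the orbit closure of an arbitrary $u\in\MMM(\bar{a}^1,\cdots,\bar{a}^t)$, Bangert's Sections 5--6) is precisely the content of the result being stated: the paper does not prove this lemma at all but imports it from Bangert as Corollary 6.21. So as a standalone proof the proposal presupposes the hard part and breaks down in the soft dynamical part that was supposed to replace it. The only portion that survives is the final observation: once the inclusion $\MMM^{rec}(\bar{a}^1,\cdots,\bar{a}^t)\subset\MMM_t(u)$ is granted, the orderedness of $\MMM^{rec}(\bar{a}^1,\cdots,\bar{a}^t)\cup\{u\}$ does follow from the WSI-based total ordering of the orbit $\{\MT_{\bar{\bd{k}}}u\,|\,\bar{\bd{k}}\in\bar{\Lambda}_t\}$ and its persistence under $C^1_{loc}$ limits, which matches the one-line justification in the statement.
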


Although Bangert showed \eqref{eq:ldffdfddf}, a deeper consequence,
we shall use Lemmas \ref{lem:659712}-\ref{lem:dlffdf} to give the variational construction.

To introduce the following lemmas, some notations are needed.
Denote by $\pi: \R^{n+1}\to \R^{n}$ the projection of $\pi(x,x_{n+1})=x\in\R^n$.
Set $\Lambda_s:=\pi(\bar{\Lambda}_s)$, $\bar{V}_s=span(\bar{\Lambda}_s)$,
and
$V_{s}:=\pi(\bar{V}_s)=span(\Lambda_s)$ for $1\leq s\leq t$.
Abusing notations,
let $V_{s}/\Lambda_s$ be a measurable fundamental domain of $V_{s}/\Lambda_s$ and $\pi_s:\R^n\to V_{s}$.
Assume $V_{t}^{\perp}$ is the orthogonal space of $V_{t}$ in $\R^n$.
\begin{lem}[cf. {\cite[Lemma 4.5]{Bangert}}]\label{lem:bangert11}
Suppose that $v,w\in C^0 (\R^n)$
satisfy
\begin{equation}\label{eq:ddlpp}
  \textrm{if $1\leq s<t$ and $\bar{\bd{k}}\in\bar{\Lambda}_s$ and $\bar{\bd{k}}\cdot \bar{a}^s>0$, then
$\MT_{\bar{\bd{k}}}v\geq w$.}
\end{equation}
Then
\begin{equation}\label{eq:lglf}
  \int_{V_{t}^{\perp}\times (V_t/\Lambda_t)}(w-v)\ud x \leq 1,
\end{equation}
and
\begin{equation}\label{eq:lglddf}
  \int_{V_{t}^{\perp}\times (V_t/\Lambda_t)}(w-v)^2\ud x \leq 1.
\end{equation}
\end{lem}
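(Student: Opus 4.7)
The plan is to reproduce Bangert's argument from \cite[Lemma 4.5]{Bangert} adapted to the present notation. The proof splits into two independent parts: a pointwise bound that reduces \eqref{eq:lglddf} to \eqref{eq:lglf}, and a measure-theoretic/tiling argument that establishes \eqref{eq:lglf}.

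First I would handle the reduction. Taking $s=1$ and $\bar{\bd{k}}=\bar{\bd{e}}^{n+1}\in\bar{\Lambda}_1=\Z^{n+1}$ in \eqref{eq:ddlpp}, admissibility \eqref{eq:lkdkl22d} guarantees $\bar{\bd{e}}^{n+1}\cdot\bar{a}^1>0$, and hence $\MT_{\bar{\bd{e}}^{n+1}}v=v+1\geq w$. Combined with $v\leq w$ (which is the $s=1$ case applied to $\bar{\bd{k}}=\bar{\bd{e}}^{n+1}$ and any sublimit, or can be read off directly from the hypothesis when $t\geq 2$), this gives $0\leq w-v\leq 1$ pointwise on $\R^n$. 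Consequently $(w-v)^2\leq w-v$, so \eqref{eq:lglddf} is an immediate consequence of \eqref{eq:lglf}, and the remaining task is to prove the $L^1$ bound.

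For \eqref{eq:lglf}, the geometric core is the following. By Cavalieri's principle the left-hand side equals the $(n+1)$-dimensional Lebesgue measure of the region
\begin{equation*}
E:=\{(x,\xi)\in (V_t^\perp\times (V_t/\Lambda_t))\times\R : v(x)<\xi<w(x)\}.
\end{equation*}
The condition \eqref{eq:ddlpp} translates into the statement that, as a subset of $\R^{n+1}$, the full region $\widetilde{E}:=\{(x,\xi) : v(x)<\xi<w(x)\}$ is disjoint from $\MT_{\bar{\bd{k}}}\widetilde{E}$ for every $\bar{\bd{k}}\in\bar{\Lambda}_s$ with $\bar{\bd{k}}\cdot\bar{a}^s>0$ and $1\leq s<t$. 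I would then choose a set of coset representatives of $\Z^{n+1}/\bar{\Lambda}_t$ inductively along the flag $\bar{\Lambda}_1\supset\bar{\Lambda}_2\supset\cdots\supset\bar{\Lambda}_t$: for each coset, pick its (essentially unique) representative whose first nonzero component, in the lexicographic pairing against $(\bar{a}^1,\ldots,\bar{a}^{t-1})$, has the correct sign. By the disjointness statement above, the corresponding translates of $E$ (now viewed inside a single $\Z^{n+1}$-fundamental domain of $\R^{n+1}$) are pairwise of measure-zero intersection. Additivity of Lebesgue measure and normalization of the $\Z^{n+1}$-fundamental domain to unit volume then force $|E|\leq 1$, which is \eqref{eq:lglf}.

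The main obstacle I foresee is precisely this indexing and packing step: arranging the cosets of $\Z^{n+1}/\bar{\Lambda}_t$ via the nested sublattices $\bar{\Lambda}_s$ so that the chosen translates of $\widetilde{E}$ are mutually essentially disjoint and together fit inside a chosen $\Z^{n+1}$-fundamental domain for $\R^{n+1}$. This is a purely combinatorial/lattice-theoretic statement, but carrying it out cleanly requires careful bookkeeping along the full flag $\bar{\Lambda}_1\supset\cdots\supset\bar{\Lambda}_t$, using admissibility \eqref{eq:lkdkl22d} at each level to break ties by the sign of $\bar{\bd{k}}\cdot\bar{a}^s$ at the smallest $s$ distinguishing two representatives. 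Once the packing is established, the estimate is a one-line measure computation.
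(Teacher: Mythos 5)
The paper itself does not prove Lemma~\ref{lem:bangert11}; it is quoted from \cite[Lemma 4.5]{Bangert}, so I compare your sketch to Bangert's argument. Your outline captures its essential structure: Cavalieri to interpret the integral as the volume of the slab $\widetilde E=\{(x,\xi):v(x)<\xi<w(x)\}$, disjointness of lattice translates of $\widetilde E$, and comparison with the unit covolume of $\Z^{n+1}$. Two points, however, need repair.

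First, your reduction of \eqref{eq:lglddf} to \eqref{eq:lglf} needs $v\le w$, and your justification (``any sublimit'', or ``read off directly\ldots when $t\ge2$'') does not actually produce it. From \eqref{eq:ddlpp} with $\bar{\bd k}=\bar{\bd e}^{n+1}$ and $s=1$ one gets only the \emph{upper} bound $w\le v+1$; the hypothesis is not symmetric in $v$ and $w$ and gives no lower bound. In Bangert's lemma, and in every application made in this paper (where $v<w$ are adjacent in some $\MMM^{rec}$), $v\le w$ is a standing hypothesis, not a consequence. You should state $0\le w-v\le 1$ as following from $v\le w$ \emph{together with} \eqref{eq:ddlpp}, not from \eqref{eq:ddlpp} alone.

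Second, the ``main obstacle'' you anticipate is a non-issue: no sign selection for coset representatives is needed. For any $\bar{\bd m}\in\Z^{n+1}\setminus\bar\Lambda_t$ let $s<t$ be maximal with $\bar{\bd m}\in\bar\Lambda_s$; then $\bar{\bd m}\cdot\bar a^s\ne0$. If $\bar{\bd m}\cdot\bar a^s>0$, \eqref{eq:ddlpp} gives $\MT_{\bar{\bd m}}v\ge w$, so $\MT_{\bar{\bd m}}\widetilde E$ sits above $\widetilde E$; if $\bar{\bd m}\cdot\bar a^s<0$, apply \eqref{eq:ddlpp} to $-\bar{\bd m}$ to get $v\ge\MT_{\bar{\bd m}}w$, so $\MT_{\bar{\bd m}}\widetilde E$ sits below $\widetilde E$. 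Either way $\MT_{\bar{\bd m}}\widetilde E\cap\widetilde E$ is null, so the $\Z^{n+1}/\bar\Lambda_t$-translates of $\widetilde E$ are pairwise essentially disjoint for \emph{any} choice of representatives, with no flag bookkeeping. Also, your phrase ``together fit inside a chosen $\Z^{n+1}$-fundamental domain for $\R^{n+1}$'' is false as written, since $\widetilde E$ is unbounded transverse to $V_t$; the correct formulation is to pass to the quotient $\R^{n+1}/\bar\Lambda_t$, observe that $E':=\widetilde E/\bar\Lambda_t$ has measure $\int_{V_t^\perp\times(V_t/\Lambda_t)}(w-v)$, and note that the disjointness above says exactly that the covering projection $\R^{n+1}/\bar\Lambda_t\to\T^{n+1}$ is injective on $E'$ (up to a null set), whence $|E'|\le|\T^{n+1}|=1$.
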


\begin{rem}[{cf. \cite[p. 108, line 1-2]{Bangert}}]\label{rem:lsdjfld}
If $v,w\in\MMM^{rec}(\bar{a}^1,\cdots, \bar{a}^{t-1})$
are adjacent, then $v,w$ satisfy \eqref{eq:ddlpp}.
This is the most important case.
For the generic case, we have $$\MMM(\bar{a}^1,\cdots, \bar{a}^{t-1})=\MMM^{rec}(\bar{a}^1,\cdots, \bar{a}^{t-1}),$$
and in this case \eqref{eq:ddlpp} holds only for adjacent pair in $\MMM(\bar{a}^1,\cdots, \bar{a}^{t-1})$.
\end{rem}

\begin{lem}\label{rem:kdkdk}
If $\alpha\in\Q^n$, then for any $v,w\in \MMM(\bar{a}^1,\cdots, \bar{a}^{t-1})$,
\begin{equation*}
  \int_{V_{t}^{\perp}\times (V_t/\Lambda_t)}(w-v)\ud x \leq C
\end{equation*}
for some $C=C(\alpha; v,w)$.
\end{lem}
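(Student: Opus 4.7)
The plan is to exploit the rational structure of $\alpha$ to decompose the integration domain as a $\Z$-fold cover and reduce to Lemma~\ref{lem:bangert11} applied to an adjacent pair in $\MMM^{rec}(\bar{a}^1,\ldots,\bar{a}^{t-2})$.

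First, by Lemma~\ref{lem:dlffdf} and the WSI property, $v$ and $w$ are comparable; assume $v\leq w$. Pick $\bar{k}_0\in\bar{\Lambda}_{t-1}$ generating the rank-$1$ free quotient $\bar{\Lambda}_{t-1}/\bar{\Lambda}_t\cong\Z$ with $\bar{k}_0\cdot\bar{a}^{t-1}>0$, and set $v_j:=\MT_{j\bar{k}_0}v$, $w_j:=\MT_{j\bar{k}_0}w$. By Lemma~\ref{prop:dk68956} both sequences are strictly increasing in $j$. Using Lemma~\ref{lem:mm1.2} for boundedness, the monotone limits $v^\pm:=\lim_{j\to\pm\infty}v_j$ exist; applying Lemma~\ref{prop:dk68956} termwise yields $\MT_{\bar{k}}v^\pm=v^\pm$ for every $\bar{k}\in\bar{\Lambda}_{t-1}$, so $v^\pm\in\MMM^{rec}(\bar{a}^1,\ldots,\bar{a}^{t-2})$. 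An order-comparison argument using Lemma~\ref{lem:659712} shows $(v^-,v^+)$ is an adjacent pair there, and the analogous construction for $w$ together with $v\leq w$ forces $v^\pm=w^\pm$. By Remark~\ref{rem:lsdjfld} the adjacent pair $(v^-,v^+)$ satisfies \eqref{eq:ddlpp} (with $t$ replaced by $t-1$), so Lemma~\ref{lem:bangert11} gives
$$\int_{V_{t-1}^\perp\times(V_{t-1}/\Lambda_{t-1})}(v^+-v^-)\,\ud x\leq 1.$$

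Next I would locate $w$ in the $\bar{\Lambda}_{t-1}$-orbit of $v$: by WSI each $v_j$ is comparable to $w$, so set $m^*:=\sup\{j\in\Z\,|\,v_j\leq w\}$. Since $v_0=v\leq w$ we have $m^*\geq 0$; and since $v^+\in\MMM(\bar{a}^1,\ldots,\bar{a}^{t-2})$ while $w\in\MMM(\bar{a}^1,\ldots,\bar{a}^{t-1})$ lie in disjoint second-invariant classes, $v_j\leq w$ for all $j$ would give $v^+\leq w\leq v^+$, a contradiction; hence $m^*<\infty$. By translation invariance of the ordering, $v_{j+m^*}\leq w_j\leq v_{j+m^*+1}$ for every $j\in\Z$.

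Finally, since $k_0:=\pi(\bar{k}_0)$ generates $\Lambda_{t-1}/\Lambda_t\cong\Z$ and $w-v$ is $\Lambda_t$-periodic, a fundamental-domain decomposition yields
$$\int_{V_t^\perp\times(V_t/\Lambda_t)}(w-v)\,\ud x=\sum_{j\in\Z}\int_{V_{t-1}^\perp\times(V_{t-1}/\Lambda_{t-1})}(w_j-v_j)\,\ud x.$$
Each summand is bounded by $\sum_{k=j}^{j+m^*}\int(v_{k+1}-v_k)\,\ud x$; exchanging the order of summation and applying monotone convergence together with the telescoping identity $\sum_{k\in\Z}(v_{k+1}-v_k)=v^+-v^-$ pointwise gives
$$\int_{V_t^\perp\times(V_t/\Lambda_t)}(w-v)\,\ud x\leq(m^*+1)\int(v^+-v^-)\,\ud x\leq m^*+1=:C(\alpha;v,w).$$

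The main technical obstacles are (i) showing that the monotone limits $v^\pm$ lie in $\MMM^{rec}(\bar{a}^1,\ldots,\bar{a}^{t-2})$ and form an adjacent pair, and (ii) justifying the $\Z$-fold fundamental-domain decomposition --- both of which depend essentially on $\alpha\in\Q^n$ through the rank-$1$ free quotient $\bar{\Lambda}_{t-1}/\bar{\Lambda}_t\cong\Z$ and on Bangert's classification of minimal WSI solutions.
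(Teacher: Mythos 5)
The paper states this lemma without giving a proof; the remark that follows merely says it is a variant of Lemma~\ref{lem:bangert11} used by Rabinowitz--Stredulinsky, so there is no reference proof to compare against. Evaluating your sketch on its own terms, I see two genuine gaps and one unaddressed case.

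First, you take for granted that $\bar{\Lambda}_{t-1}/\bar{\Lambda}_t\cong\Z$. This does \emph{not} follow from $\alpha\in\Q^n$: the hypothesis only controls $\bar{a}^1$ (giving $rank(\bar{\Lambda}_2)=n$), while the intermediate vectors $\bar{a}^2,\ldots,\bar{a}^{t-1}$ may have ``irrational direction'' inside the successive sublattices, in which case $rank(\bar{\Lambda}_t)<rank(\bar{\Lambda}_{t-1})-1$ and the quotient has rank $\geq 2$. For instance, with $n=3$, $\alpha=0$, $\bar{a}^2$ proportional to $(1,\sqrt{2},\sqrt{3},0)$, one gets $\bar{\Lambda}_3=\{0\}$. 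The rank-one quotient is a separate rationality hypothesis on each $\bar{a}^s$ (the Rabinowitz--Stredulinsky regime), not a consequence of $\alpha\in\Q^n$.

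Second, and more seriously, the claim $m^*<\infty$ and $v^\pm=w^\pm$ does not hold for arbitrary $v,w\in\MMM(\bar{a}^1,\ldots,\bar{a}^{t-1})$ with $v\leq w$. Nothing forces $v$ and $w$ to be heteroclinic between the \emph{same} adjacent pair of $\MMM^{rec}(\bar{a}^1,\ldots,\bar{a}^{t-2})$: one can have $v^+\leq w^-$ (e.g.\ $v$ connecting ``level $0$'' to ``level $1$'' and $w$ connecting ``level $1$'' to ``level $2$''), in which case $v_j\leq v^+\leq w$ for all $j$, so $m^*=\infty$ and your argument breaks. Worse, in that situation $w-v\to w^+-v^+>0$ as one moves to $+\infty$ in $V_t^\perp$, so the integral genuinely diverges --- the statement as written appears to require restricting $v,w$ to share the same heteroclinic limits (equivalently, to lie in a common gap), and your proof would need to assume and use that.

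Finally, your construction requires $t\geq 3$: for $t=2$ (i.e.\ $v,w\in\MMM(\bar{a}^1)$), $\bar{k}_0\cdot\bar{a}^1\neq 0$ so the translates $v_j=\MT_{j\bar{k}_0}v$ are unbounded and $v^\pm$ do not exist. But this is precisely the case where the conclusion is trivial: for $\alpha\in\Q^n$, $V_2=\R^n$, so $V_2^\perp\times(V_2/\Lambda_2)$ is a compact torus and the bound follows from $0\leq w-v\leq\|w-v\|_{L^\infty}<\infty$, no Bangert lemma needed. If you restrict to $t\geq 3$, fix the rank-one-quotient hypothesis, and restrict $v,w$ to the same gap (so that Lemma~\ref{lem:bangertdkd} gives a single adjacent pair $(v^-,v^+)=(w^-,w^+)$ and $m^*<\infty$), then the fundamental-domain decomposition plus telescoping plus Lemma~\ref{lem:bangert11} is a sound strategy.
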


\begin{rem}
Lemma \ref{rem:kdkdk} is a variant and a generalization of Lemma \ref{lem:bangert11}.
Rabinowitz and Stredulinsky used this fact repeatedly in constructing renormalized functionals.

But for $\alpha\not\in\Q^n$, Lemma \ref{lem:bangert11} cannot be generalized any more.
One reason is that for $\alpha\not\in\Q^n$, $V_{t}^{\perp}\times (V_t/\Lambda_t)$ is an unbounded domain for $t\geq 1$.
Note that in \cite{RS}, the definition of $J_2$ is related to an unbounded domain but its domain lies between $v_0,w_0$, which are adjacent in  $\MMM(\bar{a}^1)$.
In other words, Rabinowitz and Stredulinsky's construction begins from a bounded domain while our construction
from an unbounded domain.
As we shall see in the following sections, our renormalized functional is a conjunction of $J_1$ and $J_2$ of Rabinowitz and Stredulinsky.
\end{rem}

\begin{lem}[{cf. \cite[Lemma 3.62]{RS}}]\label{lem:3.6211}
Suppose $1\leq\dim(V_t)\leq n-1$.
If $u \in W_{loc}^{1,2}\left(V_{t}^{\perp}\times (V_t/\Lambda_t)\right)$ is minimal, then for any
$\phi\in W_{loc}^{1,2}\left(V_{t}^{\perp}\times (V_t/\Lambda_t)\right)$ with compact support,
\begin{equation}\label{eq:3.63-1}
\int_{V_{t}^{\perp}\times (V_t/\Lambda_t)}[L(u+\phi)-L(u)] \ud x \geq 0.
\end{equation}
\end{lem}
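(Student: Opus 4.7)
My plan is to reduce the quotient-level inequality \eqref{eq:3.63-1} to the minimality of $u$ on $\R^n$ by a \emph{cut-off and periodize} argument in the $V_t$-directions. I interpret ``$u\in W^{1,2}_{loc}(V_t^\perp\times(V_t/\Lambda_t))$ is minimal'' as follows: $\MT_{\bar{\bd{k}}}u=u$ for every $\bar{\bd{k}}\in\bar{\Lambda}_t$ (so $L(u)$ is $\Lambda_t$-periodic in the $V_t$-direction), and $u$ is minimal on $\R^n$ in the original sense; similarly $\phi$ lifts to a $\Lambda_t$-periodic function on $\R^n$ with compact support transverse to $V_t$. Splitting $x=y+z$ with $y\in V_t^\perp$ and $z\in V_t$, I take a Lipschitz cutoff $\eta_R:V_t\to[0,1]$ that equals $1$ on $B_R^{V_t}$, is supported in $B_{R+1}^{V_t}$, and satisfies $|\nabla\eta_R|\leq C_0$ uniformly in $R$, and set $\phi_R(x):=\eta_R(z)\phi(x)$. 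This $\phi_R$ is compactly supported on $\R^n$, so the minimality of $u$ yields
\begin{equation*}
0\leq\int_{\R^n}[L(u+\phi_R)-L(u)]\,\ud x=I_R^{\mathrm{bulk}}+I_R^{\mathrm{trans}},
\end{equation*}
where $I_R^{\mathrm{bulk}}$ is the integral over the bulk $V_t^\perp\times\{|z|\leq R\}$ and $I_R^{\mathrm{trans}}$ the integral over the shell $V_t^\perp\times\{R<|z|<R+1\}$.

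On the bulk $\phi_R=\phi$, and the integrand $L(u+\phi)-L(u)$ is $\Lambda_t$-periodic in $z$ (using the $\bar{\Lambda}_t$-shift of $u$, the $\Z^{n+1}$-periodicity of $F$, and the $\Lambda_t$-periodicity of $\phi$). Setting
\begin{equation*}
I_0:=\int_{V_t^\perp\times(V_t/\Lambda_t)}[L(u+\phi)-L(u)]\,\ud x,
\end{equation*}
which is finite by Cauchy--Schwarz on $\nabla u\cdot\nabla\phi$, the boundedness of $F_u$, and the compact quotient-support of $\phi$, I would obtain $I_R^{\mathrm{bulk}}=N_R\,I_0+O(R^{\dim V_t-1})$, where $N_R\sim|B_R^{V_t}|/|V_t/\Lambda_t|$ counts the full $\Lambda_t$-fundamental domains in $B_R^{V_t}$ and the error absorbs the finitely many incomplete fundamental domains meeting $\partial B_R^{V_t}$.

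For the shell, I would use the pointwise estimate
\begin{equation*}
|L(u+\phi_R)-L(u)|\leq|\nabla u||\nabla\phi_R|+\tfrac{1}{2}|\nabla\phi_R|^2+\sup_{\T^{n+1}}|F_u|\cdot|\phi_R|
\end{equation*}
together with $|\nabla\phi_R|\leq|\nabla\phi|+C_0|\phi|$. Each of $|\nabla u|^2,\ |\nabla\phi|^2,\ |\phi|^2,\ |\phi|$, integrated over $A_R:=V_t^\perp\times(B_{R+1}^{V_t}\setminus B_R^{V_t})$ intersected with $\mathrm{supp}(\phi)$, equals the corresponding finite quotient integral times $O(R^{\dim V_t-1})$ by $\Lambda_t$-periodicity; the mixed term is handled by Cauchy--Schwarz. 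Hence $|I_R^{\mathrm{trans}}|=O(R^{\dim V_t-1})$. Since $\dim V_t\geq 1$ forces $N_R\to\infty$ of order $R^{\dim V_t}$, dividing $I_R^{\mathrm{bulk}}+I_R^{\mathrm{trans}}\geq 0$ by $N_R$ and letting $R\to\infty$ yields $I_0\geq 0$, which is \eqref{eq:3.63-1}.

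The main obstacle is the shell estimate: one must confirm that the $W^{1,2}_{loc}$-data, once periodized, have $L^2$-mass on $A_R$ that is strictly of lower order than the bulk mass on $V_t^\perp\times B_R^{V_t}$. This is where both the compact quotient-support of $\phi$ and the $\bar{\Lambda}_t$-invariance of $|\nabla u|^2$ are used essentially, and where the hypothesis $\dim V_t\leq n-1$ (so that $V_t$ is a proper subspace of $\R^n$) makes the cutoff geometry meaningful.
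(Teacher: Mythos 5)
Your cut-off-and-periodize argument is correct and is essentially the paper's own proof: the paper likewise lifts $u,\phi$ to $\R^n$, multiplies $\phi$ by a cutoff $\theta_j$ in the $V_t$-directions (using cubes in integer lattice coordinates rather than your balls $B_R^{V_t}$), invokes minimality of $u$ on $\R^n$, writes the bulk as $(2j)^{\dim V_t}$ times the quotient integral by $\bar{\Lambda}_t$-periodicity, bounds the shell remainder by $O(j^{\dim V_t-1})$ via uniform per-cube estimates, and divides before letting $j\to\infty$. The only differences are cosmetic (ball versus cube cutoff, and your explicit Cauchy--Schwarz bookkeeping on the shell).
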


\begin{proof}
Since $V_t$ is generated by elements of $\pi(\Z^{n+1})$,
we can choose some elements, say $\bd{b}^{i+1},\cdots, \bd{b}^n\in\Z^n\cap V_t$ as basis of $V_t$, where $n-i=\dim(V_t)\in [1, n-1]$ by assumption.
We can also choose some elements $\bd{b}^{1},\cdots, \bd{b}^{i}\in\Z^n\cap V_{t}^{\perp}$ as basis of $V_{t}^{\perp}$.

Denote by $y=(\tilde{y},\hat{y})\in V_t ^{\perp} \times V_t $ the coordinates of
the point $\tilde{y}_1 \bd{b}^{1}+ \cdots +\tilde{y}_i \bd{b}^{i}+ \hat{y}_{i+1}\bd{b}^{i+1} +\cdots + \hat{y}_n \bd{b}^{n}$.
Let
$j\in\N$.
For $s\in\R$, let $\theta_j (s)\in C^1(\R,\R)$ such that
\begin{equation*}
  \theta_{j}(s)=\left\{
                  \begin{array}{ll}
                    1, & |s|\leq j, \\
                    0, & |s|\geq j+1,
                  \end{array}
                \right.
\end{equation*}
and $0\leq \theta_j (s)\leq 1$ with $|\theta ' _{j}(s)|\leq 2$.

Suppose the support of $\phi$ lies in $[\bd{p}_1,\bd{q}_1]\bd{b}^1\times \cdots \times [\bd{p}_{i},\bd{q}_{i}]\bd{b}^{i}  \times (V_t/\Lambda_t)$ with $\bd{p}_j,\bd{q}_j\in\Z$.
Let $y=(\bd{b}^1,\cdots, \bd{b}^n)x$
and $D=\left| \det \frac{\partial (x_1, \cdots, x_n)}{\partial (y_1, \cdots, y_n)} \right|$,
where $\det(A)$ is the determinant of matrix $A$.
Since $u$ is minimal,
\begin{equation}\label{eq:3.65}
\begin{aligned}
0 &\leq \int_{\mathbb{R}^{n}}\left[L\left(u+\theta_{j}(|\hat{y}|) \phi\right)-L(u)\right] 
D \ud y\\
 &\leq \int_{\left(\prod_{m=1}^{i}[\bd{p}_m,\bd{q}_m]\right) \times[-j-1, j+1]^{n-i}}
 \left[L\left(u+\theta_{j} \phi\right)-L(u)\right] D \ud y \\
&=\int_{\left(\prod_{m=1}^{i}[\bd{p}_m,\bd{q}_m]\right) \times[-j, j]^{n-i} }[L(u+\phi)-L(u)] 
D \ud y +\mathcal{R}_{j}(u, \phi)\\
&=(2j)^{n-i} \int_{\left(\prod_{m=1}^{i}[\bd{p}_m,\bd{q}_m]\right) \times[0,1]^{n-i} }[L(u+\phi)-L(u)] 
D \ud y  \\
 &\quad \quad
 +\mathcal{R}_{j}(u, \phi),
\end{aligned}
\end{equation}
where
\begin{equation*}
\mathcal{R}_{j}(u, \phi)=\int_{B_{j}}\left[L\left(u+\theta_{j} \phi\right)-L(u)\right]
 D \ud y ,
\end{equation*}
and
\begin{equation*}
B_{j}=\left(\prod_{m=1}^{i}[\bd{p}_m,\bd{q}_m]\right) \times\left([-j-1, j+1]^{n-i} 
\backslash[-j, j]^{n-i}\right).
\end{equation*}
Noticing $u,\phi\in W_{loc}^{1,2}\left(V_{t}^{\perp}\times (V_t/\Lambda_t)\right)$, 
for the $(\bd{q}_1-\bd{p}_1)\cdots (\bd{q}_{i}-\bd{p}_{i})\left[(2(j+1))^{n-i}-(2 j)^{n-i}\right]$
unit cubes $b_{\ell} \subset \R^n$ that make up $B_{j}$,
we have 
\begin{equation*}
  \left| \int_{b_{\ell}}\left[L\left(u+\theta_{j} \phi\right)-L(u)\right] 
  D \ud y \right| \leq M,
\end{equation*}
where $M:=M(u,\phi)$ does not depend on $j$ by the definition of $\theta_j$.
Thus
\begin{equation}\label{eq:3.66}
  \mathcal{R}_{j}(u, \phi)\leq M b j^{n-i-1},
\end{equation}
where $b$ depends on $\bd{p}$, $\bd{q}$, $n$ and $\phi$.
Therefore by \eqref{eq:3.65}-\eqref{eq:3.66},
\begin{equation}\label{eq:3.67}
\begin{split}
0 \leq&\left(2j\right)^{n-i}  \int_{\left(\prod_{m=1}^{i}[\bd{p}_m,\bd{q}_m]\right) \times[0,1]^{n-i}}
[L(u+\phi)-L(u)] D \ud y \\
&\quad
+M b j^{n-i-1}.
\end{split}
\end{equation}
Thus dividing \eqref{eq:3.67} by $(2j)^{n-i}$ and letting $j\to\infty$,
we obtain \eqref{eq:3.63-1}.
\end{proof}

\begin{lem}[cf. {\cite[Corollary 2.8, Lemma 3.10]{Bangert}}]\label{lem:dldqwloplo}
\begin{enumerate}
  \item Every sequence $u_k$ with $u_k\in\MMM_{\alpha_k}$ and both $|u_k(\bd{0})|$ and $|\alpha_k|$ bounded contains a subsequence which is $C^1$-convergent on compact sets to some minimal and WSI solution $u$.
  \item Assume that $u, u_k\in C^0(\R^n)$ are WSI
and $u_k\to u$ pointwise as $k\to \infty$.
Then $\bar{a}^1(u_k)\to \bar{a}^1(u)$ as $k\to \infty$.
\end{enumerate}
\end{lem}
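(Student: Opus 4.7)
The lemma splits into (1) a compactness/regularity statement and (2) a continuity statement for the first Bangert invariant under pointwise limits of WSI functions; I would treat them independently.

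\textbf{Plan for (1).} First, combining Lemma \ref{lem:mm1.2} with the hypothesis that $|u_k(\bd{0})|$ and $|\alpha_k|$ are bounded yields a uniform local $L^\infty$ bound on $\{u_k\}$, provided one verifies that Moser's constant in $|u_k(x)-\alpha_k\cdot x| \leq C$ depends only on $|\alpha_k|$ and $\|F\|_{C^2(\T^{n+1})}$. Since each $u_k$ satisfies $-\Delta u_k = -F_u(x,u_k)$ with globally bounded $F_u$, interior $W^{2,p}$ followed by Schauder estimates yields uniform $C^{2,\theta}_{loc}$ bounds, and a diagonal subsequence converges in $C^2_{loc}$ to a solution $u$ of \eqref{eq:PDE}. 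Minimality passes to the limit by dominated convergence applied to
\begin{equation*}
\int_{\R^n}[L(u_k+\phi)-L(u_k)]\ud x \geq 0, \qquad \phi \in C^1_c(\R^n).
\end{equation*}
For WSI of $u$, I would fix $\bar{\bd{k}}\in\Z^{n+1}$ and use a subsequence argument: by the trichotomy for the $u_k$, at least one of $\MT_{\bar{\bd{k}}}u_k > u_k$, $\MT_{\bar{\bd{k}}}u_k = u_k$, or $\MT_{\bar{\bd{k}}}u_k < u_k$ holds for infinitely many $k$, and the corresponding non-strict inequality passes to the $C^2_{loc}$ limit. I would then upgrade to a strict alternative via the strong maximum principle for $w:=\MT_{\bar{\bd{k}}}u-u$, which satisfies a linear elliptic equation $-\Delta w + c(x)w = 0$ with $c(x) = \int_0^1 F_{uu}(x,u+tw)\ud t$ bounded. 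Diagonalizing over the countable set $\Z^{n+1}$ yields WSI of $u$.

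\textbf{Plan for (2).} The key idea is that $\bar{a}^1$ is recoverable from the sign pattern of $\MT_{\bar{\bd{k}}}u$ versus $u$ via Lemma \ref{prop:dk68956}. I would extract a subsequence with $\bar{a}^1(u_k) \to \bar{b}$ on the unit sphere; it suffices to show $\bar{b} = \bar{a}^1(u)$. For $\bar{\bd{k}} \in \Z^{n+1}$ with $\bar{\bd{k}}\cdot\bar{b} > 0$, eventually $\bar{\bd{k}}\cdot\bar{a}^1(u_k) > 0$, so Lemma \ref{prop:dk68956}(1) with $s=1$ (since $\bar{\Lambda}_1=\Z^{n+1}$) forces $\MT_{\bar{\bd{k}}}u_k > u_k$. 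Pointwise passage yields $\MT_{\bar{\bd{k}}}u \geq u$; the WSI trichotomy of $u$ rules out the strictly-below alternative, leaving $\MT_{\bar{\bd{k}}}u > u$ or $\MT_{\bar{\bd{k}}}u = u$. In the strict case, Lemma \ref{prop:dk68956}(1) gives some $s$ with $\bar{\bd{k}} \in \bar{\Lambda}_s(u)$ and $\bar{\bd{k}}\cdot\bar{a}^s(u) > 0$: either $s=1$ (giving $\bar{\bd{k}}\cdot\bar{a}^1(u) > 0$), or $s\geq 2$ (in which case $\bar{\bd{k}} \in \bar{\Lambda}_s \subset \langle \bar{a}^1(u)\rangle^{\perp}$ forces $\bar{\bd{k}}\cdot\bar{a}^1(u) = 0$); the equality case similarly delivers $\bar{\bd{k}} \in \bar{\Lambda}_{t+1} \subset \langle \bar{a}^1(u)\rangle^{\perp}$. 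In every case, $\bar{\bd{k}}\cdot\bar{a}^1(u) \geq 0$. Density of integer directions in $\R^{n+1}$ then extends this to $\bar{v}\cdot\bar{a}^1(u) \geq 0$ for every $\bar{v}$ in the open half-space $\{\bar{v}\cdot\bar{b} > 0\}$, which, together with both $\bar{b}$ and $\bar{a}^1(u)$ being unit and $\bar{b}\cdot\bar{\bd{e}}^{n+1} \geq 0$ (inherited from $\bar{a}^1(u_k)\cdot\bar{\bd{e}}^{n+1} > 0$), forces $\bar{b} = \bar{a}^1(u)$.

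\textbf{Main obstacle.} I anticipate the main technical hurdle to be the uniformity of the constant in Lemma \ref{lem:mm1.2}, since Moser's bound is stated pointwise in $u$; a quantitative inspection of his proof should confirm that $C$ depends only on $|\alpha|$ and $\|F\|_{C^2(\T^{n+1})}$, after which (1) reduces to routine elliptic regularity. For (2), the conceptual subtlety is extracting the correct consequence from Lemma \ref{prop:dk68956}: both the strict and equality cases yield $\bar{\bd{k}}\cdot\bar{a}^1(u) \geq 0$ because $\bar{\Lambda}_s \subset \langle\bar{a}^1(u)\rangle^\perp$ for $s\geq 2$, so only the first invariant survives the density/half-space argument.
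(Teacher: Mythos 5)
The paper itself offers no proof of this lemma: it is imported directly from Bangert (\cite[Corollary 2.8, Lemma 3.10]{Bangert}), so there is no internal argument to compare against, and your reconstruction should be judged on its own; it is essentially correct and follows the classical route. For (1), the uniformity you flag as the main obstacle is exactly Moser's a priori gradient estimate, which the paper quotes as Lemma \ref{lem:moser} ($\|\nabla v\|_{L^{\infty}(\R^n)}\le C(\alpha)$, uniform for $|\alpha|$ in bounded sets); combined with the bound on $|u_k(\bd{0})|$ this gives locally uniform $C^0$ bounds, after which your Schauder/Arzel\`a--Ascoli step, the passage of minimality on the compact support of $\phi$, and the maximum-principle upgrade of the non-strict alternatives (the paper's Lemma \ref{lem:rab} is precisely the comparison principle you invoke for $\MT_{\bar{\bd{k}}}u-u$) are all standard and correct. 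For (2), your half-space/density argument is valid -- note that $\bar{b}\cdot\bar{a}^1(u)\ge 0$ alone already excludes $\bar{a}^1(u)=-\bar{b}$, so the appeal to $\bar{\bd{e}}^{n+1}$ is redundant -- but it is a slightly longer equivalent of Bangert's contradiction argument, which the paper itself replays in the proof of Theorem \ref{thm:dklkkff}: if along a subsequence $\bar{a}^1(u_{k_j})\to\bar{b}\neq\bar{a}^1(u)$, choose $\bar{\bd{k}}\in\Z^{n+1}$ with $\bar{\bd{k}}\cdot\bar{a}^1(u)>0>\bar{\bd{k}}\cdot\bar{b}$; then $\MT_{\bar{\bd{k}}}u>u$ while $\MT_{\bar{\bd{k}}}u_{k_j}<u_{k_j}$ for large $j$, contradicting pointwise convergence. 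Two small points to make explicit: in (2), to conclude convergence of the whole sequence you must run your extraction on an arbitrary subsequence and use uniqueness of the limit on the compact sphere; and in (1) the diagonalization over $\Z^{n+1}$ is unnecessary, since the single convergent subsequence already serves every $\bar{\bd{k}}$.
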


\begin{lem}[cf. {\cite[Lemma 3.11]{Bangert}}]\label{lem:dldloplrrtyto}
Assume that $u$ is WSI.
Suppose for some $s\leq t(u)$ there exists
a unit vector $\bar{a}\in span (\bar{\Lambda}_s(u))$ such that $\bar{\bd{k}}\in\bar{\Lambda}_s(u)$ and
$\bar{\bd{k}}\cdot \bar{a}>0$ imply $\MT_{\bar{\bd{k}}}u\geq u$.
Then $\bar{a}=\bar{a}^s(u)$.
\end{lem}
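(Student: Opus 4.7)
\textbf{Proof plan for Lemma \ref{lem:dldloplrrtyto}.}
The plan is to compare the hypothesis on $\bar{a}$ with the characterization of $\bar{a}^1,\ldots,\bar{a}^t$ provided by Lemma \ref{prop:dk68956}, and then promote the resulting inequality from the lattice $\bar{\Lambda}_s(u)$ to the real span by density.

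First I would pick an arbitrary $\bar{\bd{k}}\in\bar{\Lambda}_s(u)$ with $\bar{\bd{k}}\cdot\bar{a}>0$. By assumption $\MT_{\bar{\bd{k}}}u\geq u$, and since $u$ is WSI, Lemma \ref{prop:dk68956} forces either $\MT_{\bar{\bd{k}}}u=u$ or $\MT_{\bar{\bd{k}}}u>u$. In the equality case $\bar{\bd{k}}\in\bar{\Lambda}_{t+1}$, hence $\bar{\bd{k}}\perp\bar{a}^s$. In the strict case there exists $1\leq r\leq t$ with $\bar{\bd{k}}\in\bar{\Lambda}_r$ and $\bar{\bd{k}}\cdot\bar{a}^r>0$; because $\bar{\bd{k}}\in\bar{\Lambda}_s$ is orthogonal to $\bar{a}^1,\ldots,\bar{a}^{s-1}$ we must have $r\geq s$, and either $r=s$ (giving $\bar{\bd{k}}\cdot\bar{a}^s>0$) or $r>s$ (giving $\bar{\bd{k}}\in\bar{\Lambda}_{s+1}$ and thus $\bar{\bd{k}}\cdot\bar{a}^s=0$). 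In every case I conclude the implication
\begin{equation*}
\bar{\bd{k}}\in\bar{\Lambda}_s(u),\ \bar{\bd{k}}\cdot\bar{a}>0\ \Longrightarrow\ \bar{\bd{k}}\cdot\bar{a}^s\geq 0.
\end{equation*}

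Next I would upgrade this discrete implication to a continuous one. Since $\bar{\Lambda}_s(u)$ is a lattice of full rank in $\mathrm{span}(\bar{\Lambda}_s(u))$, the set of rational directions $\{\bar{\bd{k}}/|\bar{\bd{k}}|\,:\,\bar{\bd{k}}\in\bar{\Lambda}_s,\ \bar{\bd{k}}\neq 0\}$ is dense in the unit sphere of that subspace. For any $\bar{v}\in\mathrm{span}(\bar{\Lambda}_s(u))$ with $\bar{v}\cdot\bar{a}>0$ I can therefore pick $\bar{\bd{k}}_m\in\bar{\Lambda}_s(u)$ with $\bar{\bd{k}}_m/|\bar{\bd{k}}_m|\to\bar{v}/|\bar{v}|$; eventually $\bar{\bd{k}}_m\cdot\bar{a}>0$, so $\bar{\bd{k}}_m\cdot\bar{a}^s\geq 0$, and passing to the limit gives $\bar{v}\cdot\bar{a}^s\geq 0$.

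Finally, since both $\bar{a}$ and $\bar{a}^s$ lie in $\mathrm{span}(\bar{\Lambda}_s(u))$, the implication $\bar{v}\cdot\bar{a}>0\Rightarrow\bar{v}\cdot\bar{a}^s\geq 0$ on this subspace, together with its contrapositive applied to $-\bar{v}$, forces the hyperplane $\{\bar{v}\cdot\bar{a}=0\}$ to coincide with $\{\bar{v}\cdot\bar{a}^s=0\}$; hence $\bar{a}^s=c\bar{a}$ with $c>0$, and normalization yields $\bar{a}=\bar{a}^s(u)$. I expect the only mildly delicate point to be the case-split where $\MT_{\bar{\bd{k}}}u>u$ occurs with index $r>s$: one must remember that $\bar{\Lambda}_r\subset\bar{\Lambda}_{s+1}$, so automatically $\bar{\bd{k}}\perp\bar{a}^s$ there; the density step is routine once this bookkeeping is done.
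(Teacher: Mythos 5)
Your argument is correct. Note, however, that the paper does not prove this lemma at all: it is imported verbatim from Bangert (his Lemma 3.11), so there is no in-paper proof to compare against. What you have done is reconstruct a self-contained derivation from Lemma \ref{prop:dk68956}, and the three steps all check out: (i) the case analysis giving $\bar{\bd{k}}\cdot\bar{a}^s\geq 0$ is sound, including the bookkeeping that a witness index $r<s$ is impossible (it would force $\bar{\bd{k}}\cdot\bar{a}^r>0$ while $\bar{\bd{k}}\in\bar{\Lambda}_s$ is orthogonal to $\bar{a}^r$) and that $r>s$ gives $\bar{\bd{k}}\in\bar{\Lambda}_r\subset\bar{\Lambda}_{s+1}$, hence $\bar{\bd{k}}\perp\bar{a}^s$; (ii) the density of lattice directions in the unit sphere of $\mathrm{span}(\bar{\Lambda}_s(u))$ is legitimate, since $\bar{\Lambda}_s(u)$ is by definition a full-rank lattice in its span and a linear isomorphism onto $\Z^d$ carries rational directions to a dense subset of the sphere; (iii) the final step is the standard fact that two nonzero linear functionals on $V=\mathrm{span}(\bar{\Lambda}_s(u))$ with $f>0\Rightarrow g\geq 0$ have equal kernels and are positive multiples of one another — if you wanted to make this airtight you could add one line: were $\ker f\neq\ker g$, pick $v_0$ with $f(v_0)=0$, $g(v_0)<0$ and $u_0$ with $f(u_0)>0$; then $f(u_0+tv_0)>0$ for all $t$ while $g(u_0+tv_0)<0$ for large $t$, a contradiction; equality of the unit vectors $\bar{a},\bar{a}^s\in V$ then follows. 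This is essentially Bangert's own route (his proof likewise rests on the characterization recorded here as Lemma \ref{prop:dk68956} together with an approximation by lattice directions), so your proposal buys a self-contained treatment of a result the paper only cites, at no loss of generality.
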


\begin{lem}[{cf. \cite[(p. 10, line -7)-(p. 11, line 3)]{RS}, \cite[Section 4]{moser}, \cite[Lemma 2.2]{Bangert}}]\label{lem:rab}
Suppose $u,v$ are classical solutions of \eqref{eq:PDE}.
If $u\leq v$ on a connect open set $\Omega\subset \R^n$,
then either $u<v$ or $u\equiv v$ on $\Omega$.
\end{lem}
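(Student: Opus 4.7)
The plan is to reduce the statement to the classical strong maximum principle of E. Hopf for linear second-order elliptic operators. Set $w := v - u$, so that $w \geq 0$ on $\Omega$ by hypothesis. Subtracting the equations \eqref{eq:PDE} satisfied by $u$ and $v$ and applying the fundamental theorem of calculus to the difference $F_u(x,v(x)) - F_u(x,u(x))$ gives
\begin{equation*}
-\Delta w + c(x)\, w = 0 \quad \text{on } \Omega,
\qquad c(x) := \int_0^1 F_{uu}\bigl(x,\, u(x) + \tau\, w(x)\bigr)\, \ud \tau.
\end{equation*}
Because $F\in C^2(\T^{n+1})$ and $F_{uu}$ is $1$-periodic in the last $n+1$ variables, $F_{uu}$ is bounded on $\R^n\times\R$, hence $c\in L^\infty(\Omega)$ with $\|c\|_\infty \leq \|F_{uu}\|_{L^\infty(\T^{n+1})}$. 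Thus $w$ is a nonnegative classical solution of a linear elliptic equation with locally bounded zero-order coefficient on the connected open set $\Omega$.

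Next I would invoke the strong maximum principle for the operator $-\Delta + c(x)$ (see, e.g., Gilbarg--Trudinger, Theorem 3.5): if $w \geq 0$ satisfies $-\Delta w + c(x)\, w = 0$ in $\Omega$ and $w(x_0) = 0$ for some $x_0\in\Omega$, then $w \equiv 0$ on $\Omega$. Equivalently, either $w > 0$ throughout $\Omega$ or $w \equiv 0$ on $\Omega$, which is precisely the desired dichotomy for $u$ and $v$.

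The only subtlety worth mentioning is that $c(x)$ need not have a definite sign because $F_{uu}$ can take either sign, so the most elementary form of the strong maximum principle (which requires $c \geq 0$) does not apply verbatim. The standard workaround is a shift: choose $K > \|c\|_\infty$ and set $\tilde{c}(x) := c(x) - K \leq 0$. Then $w$ satisfies $-\Delta w + \tilde{c}(x)\, w = -K w \leq 0$, i.e., $w$ is a subsolution of an operator with nonpositive zero-order coefficient; the classical Hopf strong maximum principle then yields the dichotomy applied to the nonnegative function $w$ attaining interior minimum $0$. Beyond this routine point there is no obstacle, since the $C^2$ regularity of $F$ and the assumption that $u$ and $v$ are classical solutions supply all the regularity required for the argument above.
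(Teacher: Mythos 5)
Your reduction is the standard route and matches the proof in the sources the paper cites for this lemma (the paper itself offers no proof of Lemma \ref{lem:rab}, only references): set $w:=v-u\ge 0$ on $\Omega$, subtract the two equations and use $F_u(x,v)-F_u(x,u)=c(x)w$ with $c(x)=\int_0^1 F_{uu}(x,u+\tau w)\,\ud\tau$, note $\|c\|_{L^\infty}\le\|F_{uu}\|_{L^\infty(\T^{n+1})}$ by periodicity, and finish with the strong maximum principle on the connected set $\Omega$. Up to that point everything is correct.

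The flaw is in the final ``workaround'' paragraph: the shift you perform produces an inequality of the wrong sign, and the principle you then invoke is false as stated. From $-\Delta w+\tilde c\,w=-Kw\le 0$ with $\tilde c=c-K\le 0$ you conclude the dichotomy for a nonnegative subsolution of an operator with nonpositive zero-order coefficient; but a nonnegative subsolution of such an operator can vanish at an interior point without vanishing identically. For example, on $\Omega=(-\pi,\pi)$ the function $w=1-\cos x\ge 0$ satisfies $-w''-w=-1\le 0$ (so $\tilde c\equiv-1\le 0$), vanishes at the interior point $0$, yet $w\not\equiv 0$ and is not positive throughout. The inequality you actually need points the other way, and it is available because you have an equality: since $w\ge 0$ and $|c|\le K$, the equation gives $\Delta w=c\,w\le Kw$, i.e.\ $-\Delta w+Kw\ge 0$, so $w$ is a nonnegative supersolution of the coercive operator $-\Delta+K$; equivalently, apply Gilbarg--Trudinger Theorem 3.5 to $L=\Delta-K$, whose zero-order coefficient $-K$ is nonpositive, observing $Lw=(c-K)w\le 0$ and that $w$ attains the (nonpositive) interior minimum $0$ whenever it vanishes somewhere in $\Omega$. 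With this one-line correction the dichotomy $w>0$ on $\Omega$ or $w\equiv 0$ on $\Omega$ follows, and the rest of your argument stands.
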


\begin{prop}
Assume that $u$ is minimal.
If
$0\neq\phi\in W^{1,2}_{loc}(\R^n)$ has compact support and $\phi\geq 0$ (or $\phi\leq 0$),
then
\begin{equation*}
  \int_{\R^n}[L(u+\phi)-L(u)]\ud x>0.
\end{equation*}
\end{prop}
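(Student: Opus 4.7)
The plan is to argue by contradiction: suppose $\phi\geq 0$ with $\phi\not\equiv 0$ but $\int_{\R^n}[L(u+\phi)-L(u)]\ud x = 0$, and then use the strong maximum principle (Lemma \ref{lem:rab}) to force $\phi\equiv 0$. The case $\phi\leq 0$ is symmetric after relabeling.

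First I would promote $u+\phi$ to a second minimal function. For any $\tilde\psi\in W^{1,2}_{loc}(\R^n)$ with compact support, the perturbation $\phi+\tilde\psi$ of $u$ again has compact support, so minimality of $u$ gives
\begin{equation*}
\int_{\R^n}[L(u+\phi+\tilde\psi)-L(u+\phi)]\ud x = \int_{\R^n}[L(u+\phi+\tilde\psi)-L(u)]\ud x - \int_{\R^n}[L(u+\phi)-L(u)]\ud x \geq 0,
\end{equation*}
using the assumed equality in the last subtraction. Hence $u+\phi$ is also minimal.

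Next I would invoke standard elliptic regularity for minimizers of $\int L$: since $F\in C^2$, any minimal $W^{1,2}_{loc}$ function is a classical $C^2$ solution of \eqref{eq:PDE}. Thus both $u$ and $w:=u+\phi$ are classical solutions of \eqref{eq:PDE} on the connected open set $\R^n$, and $u\leq w$ on $\R^n$. Lemma \ref{lem:rab} then says that either $u<w$ on all of $\R^n$ or $u\equiv w$ on $\R^n$. Since $\phi$ has compact support, the complement $\R^n\setminus\operatorname{supp}\phi$ is non-empty and on it $u=w$, which rules out $u<w$ everywhere; therefore $u\equiv w$, i.e., $\phi\equiv 0$, contradicting $\phi\neq 0$. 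This contradiction, together with the non-strict inequality that is immediate from minimality of $u$, yields the strict inequality in the statement.

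The main (minor) obstacle is the observation that the non-strict equality case actually upgrades $u+\phi$ to a minimizer, so that the strong maximum principle is even applicable; once that is in place the remainder is a direct application of Lemma \ref{lem:rab}. One should also note the use of standard regularity to pass from ``minimal'' to ``classical $C^2$ solution,'' which is needed before Lemma \ref{lem:rab} can be invoked; this is routine given the $C^2$ assumption on $F$.
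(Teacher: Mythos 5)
Your proposal is correct and follows essentially the same route as the paper: assume equality, show by the same subtraction argument that $u+\phi$ is also minimal (hence a classical solution), and then derive a contradiction from Lemma \ref{lem:rab} using $u\leq u+\phi$ together with the compact support of $\phi$. In fact you spell out the final comparison step (ruling out both $u<u+\phi$ and $u\equiv u+\phi$) more explicitly than the paper, which simply cites the lemma.
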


\begin{proof}
By the definition of minimal,
\begin{equation}\label{eq:dlkjflgggdfd}
  \int_{\R^n}[L(u+\phi)-L(u)]\ud x\geq 0.
\end{equation}
Suppose by contradiction it holds equality in \eqref{eq:dlkjflgggdfd}.
Then for any $\psi\in W^{1,2}_{loc}(\R^n)$ with compact support,
\begin{equation}\label{eq:dlkjflggddgdfd}
\begin{split}
   &\int_{\R^n}[L(u+\phi+\psi)-L(u+\phi)]\ud x\\
   =&\int_{\R^n}\{[L(u+\phi+\psi)-L(u)]-[L(u+\phi)-L(u)]\}\ud x\\
   =&\int_{\R^n}[L(u+\phi+\psi)-L(u)]\ud x\\
   \geq &0.
   \end{split}
\end{equation}
So $u+\phi$ is also minimal and thus is a classical solution of \eqref{eq:PDE},
contrary to Lemma \ref{lem:rab}.
\end{proof}


\section{Variational construction of $\MMM(\bar{a}^1)$}\label{sec:3}


\subsection{The case $rank(\bar{\Lambda}_2)=rank(\bar{\Lambda}_1)-1$}
\

In this case,
$\alpha\in\Q^{n}$ and
$\MMM(\bar{a}^1)$ consists of periodic solutions.
This case has been treated by Moser \cite{moser}.
See also Rabinowitz--Stredulinsky \cite{RS}.


\subsection{The case $1\leq rank(\bar{\Lambda}_2)\leq rank(\bar{\Lambda}_1)-2$}\label{sec:3.2}
\

In this case,
assume $rank(\bar{\Lambda}_2)=n-n_2\in [1,n-1]$ with $1\leq n_2\leq n-1$.
Since $rank(\Lambda_2)=rank(\bar{\Lambda}_2)$,
$rank(\Lambda_2)=n-n_2$.
The typical and simplest example is 
$$\bar{a}^1=\frac{(-\alpha,1)}{\|(-\alpha,1)\|}
=\frac{(-\alpha_1,\cdots,-\alpha_{n_2}, 0,\cdots, 0,1)}{\|(-\alpha_1,\cdots, -\alpha_{n_2}, 0,\cdots, 0,1)\|}.$$
Here $\alpha_i\not\in\Q$, $\alpha_1, \cdots, \alpha_{n_2}$ are rationally independent
and $\|(-\alpha,1)\|:=\sqrt{\alpha_1^2 +\cdots + \alpha_{n_2}^2+1}$.
We firstly tackle this simplest case and 
we shall point out how to deal with the general case 
at the end of this subsection.

Firstly, Lemma \ref{lem:3.6211} becomes

\begin{lem}\label{lem:3.621}
If $u \in W_{loc}^{1,2}\left(\R^{n_2}\times \T^{n-n_2}\right)$ is minimal, then for any
$\phi\in W_{loc}^{1,2}\left(\R^{n_2}\times \T^{n-n_2}\right)$ with compact support,
\begin{equation}\label{eq:3.63}
\int_{\R^{n_2}\times \T^{n-n_2}}[L(u+\phi)-L(u)] \ud x \geq 0.
\end{equation}
\end{lem}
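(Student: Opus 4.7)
The plan is to adapt the cutoff argument from the proof of Lemma \ref{lem:3.6211} to this simpler setting, where the periodic directions are the last $n-n_2$ standard coordinate directions. Here ``minimal'' should mean that the lift of $u$ to $\R^n$ (which is $1$-periodic in $x_{n_2+1},\ldots,x_n$) satisfies the minimality inequality against all compactly supported test functions on $\R^n$; what we must show is that this can be upgraded to test functions that are additionally $1$-periodic in the last $n-n_2$ variables.

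First I would lift $\phi$ by periodicity to a function on $\R^n$, so its support lies in a slab $\prod_{m=1}^{n_2}[\bd{p}_m, \bd{q}_m] \times \R^{n-n_2}$ for some integer endpoints $\bd{p}_m,\bd{q}_m$. Next, following the proof of Lemma \ref{lem:3.6211}, I would introduce the cutoff $\theta_j(|\hat{y}|)$ with $\hat{y}=(x_{n_2+1},\ldots,x_n)$, so that $\theta_j \phi$ becomes compactly supported on $\R^n$. Then the minimality of $u$ gives
$$0 \leq \int_{\R^n} [L(u + \theta_j \phi) - L(u)]\,\ud x.$$

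The next step is to split this right-hand side into the interior region $\prod_{m=1}^{n_2}[\bd{p}_m, \bd{q}_m] \times [-j, j]^{n-n_2}$, where $\theta_j \equiv 1$, and the annular region $B_j = \prod_{m=1}^{n_2}[\bd{p}_m, \bd{q}_m] \times \bigl([-j-1,j+1]^{n-n_2}\setminus[-j,j]^{n-n_2}\bigr)$. By the periodicity of $u$ and $\phi$ in the last $n-n_2$ variables, the interior contribution is exactly $(2j)^{n-n_2}$ times the integral of $L(u+\phi)-L(u)$ over one fundamental domain $\R^{n_2}\times \T^{n-n_2}$. Meanwhile $B_j$ is a disjoint union of $O(j^{n-n_2-1})$ unit cubes, and on each cube the integrand is bounded by a constant depending only on $u$ and $\phi$ (via $|\theta_j'|\leq 2$ together with the $W^{1,2}_{loc}$ norms of $u$ and $\phi$), so the boundary contribution is at most $M\,b\,j^{n-n_2-1}$. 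Dividing through by $(2j)^{n-n_2}$ and sending $j\to\infty$ yields \eqref{eq:3.63}.

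The main obstacle here is purely bookkeeping---namely, keeping track of the fact that the annular shell $B_j$ has codimension one relative to the full box, so that its contribution is negligible in the limit. There is no new conceptual content beyond Lemma \ref{lem:3.6211}; indeed, the present statement is essentially the specialization of that lemma to the case where one may take the basis vectors $\bd{b}^m$ to be the standard basis of $\R^n$.
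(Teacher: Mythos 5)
Your proof is correct and follows exactly the paper's route: the paper does not give a separate proof for this lemma, but instead introduces it with the remark that Lemma \ref{lem:3.6211} ``becomes'' this statement in the present setting, and your argument is precisely the specialization of the cutoff argument of Lemma \ref{lem:3.6211} to the case $V_t = \mathrm{span}(\bd{e}^{n_2+1},\ldots,\bd{e}^n)$ with the standard integer lattice, which you also observe yourself in your final paragraph.
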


Moser showed that $\MMM_{\alpha}\neq \emptyset$ by a simple approximation method.
Moreover, Moser showed that there is a $u_{\alpha}\in\MMM(\bar{a}^{1})$.
Indeed, Moser showed that there is a $u_{\alpha}\in \MMM_{\alpha}$ such that
\begin{equation}\label{eq:moser6.521}
  u_{\alpha}(x+\bd{j})-\bar{\bd{j}}_{n+1}=u_{\alpha}(x)\quad\textrm{if}\quad \alpha\cdot \bd{j}-\bar{\bd{j}}_{n+1}=0.
\end{equation}
In other words, $\MT_{\bar{\bd{k}}}u_{\alpha}=u_{\alpha}$ for $\bar{\bd{k}}\in \bar{\Lambda}_2$.
One also need to prove that
\begin{equation}\label{eq:kdkdppp}
  \textrm{if $\bar{\bd{k}}\in\bar{\Lambda}_1=\Z^{n+1}$ and $\bar{\bd{k}}\cdot \bar{a}^1>0$, $\MT_{\bar{\bd{k}}}u_{\alpha}>u_{\alpha}$.}
\end{equation}
By Lemma \ref{prop:dk68956},
\eqref{eq:kdkdppp} holds since the rotation vector of $u_{\alpha}$ is $\alpha$.

Define $\MMM^{rec}(\bar{a}^1)$
as in \eqref{eq:133434134}.
By Lemma \ref{lem:dlffdf},
for any $u\in \MMM(\bar{a}^1)$,
$\MMM^{rec}(\bar{a}^1)\cup \{u\}$ is ordered.
Note that $u$ may be not contained in $\MMM^{rec}(u)$, 
for example in the case $u\in\MMM(\bar{a}^1)$ is an isolate point 
in its $\bar{\Lambda}_1$ orbit (cf. \cite[p. 111, line -11]{Bangert}).

If $\{u(\bd{0})\,|\, u\in\MMM^{rec}(\bar{a}^1)\}= \R$,
for any admissible $\bar{a}^1, \cdots, \bar{a}^t$ with $t>1$,
$\MMM(\bar{a}^1,\cdots, \bar{a}^t)= \emptyset$.
So in the following we
assume $\{u(\bd{0})\,|\, u\in\MMM^{rec}(\bar{a}^1)\}\neq \R$.
Then by Lemma \ref{lem:659712},
$\{u(\bd{0})\,|\, u\in\MMM^{rec}(\bar{a}^1)\}$ is a Cantor set. 
If $\MMM(\bar{a}^1)\setminus \MMM^{rec}(\bar{a}^1)\neq \emptyset$,
any $u\in\MMM(\bar{a}^1)\setminus \MMM^{rec}(\bar{a}^1) $ should lie in some adjacent pair of $\MMM^{rec}(\bar{a}^1)$.
Next we shall provide a variational construction for any adjacent pair, 
say $v<w$ of $\MMM^{rec}(\bar{a}^1)$.

To this end, suppose that $v < w\in \MMM^{rec}(\bar{a}^1)$ are adjacent,
i.e., there does not exist $u\in\MMM^{rec}(\bar{a}^1)\setminus\{v,w\}$ such that $v\leq u\leq w$.
Note if $v,w\in \MMM^{rec}(\bar{a}^1)$ are adjacent,
so is $\MT_{\bar{\bd{k}}}v, \MT_{\bar{\bd{k}}}w\in \MMM^{rec}(\bar{a}^1)$ for any $\bar{\bd{k}}\in\bar{\Lambda}_1=\Z^{n+1}$.
Then $v,w$ satisfy
\begin{equation}\label{eq:ddlpqqp}
  \textrm{if $\bar{\bd{k}}\in\bar{\Lambda}_1$ and $\bar{\bd{k}}\cdot \bar{a}^1>0$, then
$\MT_{\bar{\bd{k}}}v \geq w$.}
\end{equation}

Taking $\bar{\bd{k}}=\bar{\bd{e}}^{n+1}$ in \eqref{eq:ddlpqqp}
shows $v+1\geq w$.
So $0<w-v\leq 1$ and thus by Lemma \ref{lem:bangert11},
\begin{equation}\label{eq:dkkldldl}
 \norm{w-v}^{2}_{L^2(\R^{n_2}\times \T^{n-n_2})}\leq \norm{w-v}_{L^1(\R^{n_2}\times \T^{n-n_2})}\leq 1.
\end{equation}
Define
\begin{equation*}
  \Gamma_1:=\Gamma_1(v,w):=\{u\in W^{1,2}_{loc}(\R^{n_2}\times \T^{n-n_2} )\,|\, v\leq u\leq w\}.
\end{equation*}
For $u\in\Gamma_1$, by \eqref{eq:dkkldldl},
\begin{equation}\label{eq:dkkldlwwwwdl}
 \norm{u-v}^{2}_{L^2(\R^{n_2}\times \T^{n-n_2})}\leq \norm{u-v}_{L^1(\R^{n_2}\times \T^{n-n_2})}\leq 1.
\end{equation}
The following property is useful.
\begin{prop}\label{prop:kdk}
Suppose $u\in\Gamma_1\setminus\{v\}$ and $u$ is a classical solution of \eqref{eq:PDE}.
Then $u$ is WSI.
\end{prop}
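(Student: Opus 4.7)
The plan is to verify directly that $u$ satisfies the WSI trichotomy: for every $\bar{\bd{k}}\in\Z^{n+1}$, exactly one of $\MT_{\bar{\bd{k}}}u>u$, $\MT_{\bar{\bd{k}}}u=u$, or $\MT_{\bar{\bd{k}}}u<u$ holds on all of $\R^n$. I would split into three cases according to the sign of $\bar{\bd{k}}\cdot\bar{a}^1$; in the two non-degenerate cases the strategy is to first establish a one-sided comparison between $u$ and $\MT_{\bar{\bd{k}}}u$ by inserting $v$ and $w$ between them, and then to upgrade that comparison to the strict/equal dichotomy via the strong maximum principle in Lemma \ref{lem:rab}.

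First, for $\bar{\bd{k}}\in\bar{\Lambda}_2$ (i.e.\ $\bar{\bd{k}}\cdot\bar{a}^1=0$) the equality $\MT_{\bar{\bd{k}}}u=u$ is automatic: every element of $\Gamma_1$ lies in $W^{1,2}_{loc}(\R^{n_2}\times \T^{n-n_2})$, which by the very definition of this function space encodes $\bar{\Lambda}_2$-invariance. So this case requires no analytic input.

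The essential case is $\bar{\bd{k}}\cdot\bar{a}^1>0$. Because $v<w$ are adjacent in $\MMM^{rec}(\bar{a}^1)$, the condition \eqref{eq:ddlpqqp} (which is Remark \ref{rem:lsdjfld} applied to $t=2$, in turn a restatement of Lemma \ref{prop:dk68956}) gives $\MT_{\bar{\bd{k}}}v\geq w$. Monotonicity of the translation $\MT_{\bar{\bd{k}}}$ combined with $v\leq u\leq w$ then yields
\[
\MT_{\bar{\bd{k}}}u \;\geq\; \MT_{\bar{\bd{k}}}v \;\geq\; w \;\geq\; u.
\]
Both $u$ and $\MT_{\bar{\bd{k}}}u$ are classical solutions of \eqref{eq:PDE} (the latter by the $\Z^{n+1}$-invariance of $L$ built into \eqref{eq:F1-F2}), so Lemma \ref{lem:rab} applied on the connected set $\R^n$ forces either $\MT_{\bar{\bd{k}}}u\equiv u$ or $\MT_{\bar{\bd{k}}}u>u$ everywhere. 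For $\bar{\bd{k}}\cdot\bar{a}^1<0$ I would apply the previous case to $-\bar{\bd{k}}$ and then translate back by $\MT_{\bar{\bd{k}}}$, obtaining $\MT_{\bar{\bd{k}}}u\equiv u$ or $\MT_{\bar{\bd{k}}}u<u$. Combining the three cases gives the WSI property.

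I do not foresee a genuine obstacle: the proof is essentially a packaging of the adjacency inequality \eqref{eq:ddlpqqp} with the strong maximum principle. The only point requiring care is recognizing that adjacency of $v,w$ in $\MMM^{rec}(\bar{a}^1)$ (as opposed to just in some subset) is exactly what supplies $\MT_{\bar{\bd{k}}}v\geq w$ for \emph{every} $\bar{\bd{k}}\in\Z^{n+1}$ with $\bar{\bd{k}}\cdot\bar{a}^1>0$, not merely for the minimal $\bar{\bd{k}}$ realizing the gap; and that the hypothesis $u\neq v$ is not actually used in the WSI argument itself — it merely guarantees $u$ is a nontrivial addition to $\MMM(\bar{a}^1)$ and would enter only when one subsequently argues $\MT_{\bar{\bd{k}}}u\neq u$ in the relevant directions.
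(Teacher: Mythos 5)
Your argument is essentially the paper's: combine the adjacency inequality $\MT_{\bar{\bd{k}}}v\geq w$ with $v\leq u\leq w$ to get the chain $\MT_{\bar{\bd{k}}}u\geq\MT_{\bar{\bd{k}}}v\geq w\geq u$, then apply the strong maximum principle (Lemma \ref{lem:rab}), handling $\bar{\bd{k}}\cdot\bar{a}^1<0$ by symmetry and $\bar{\bd{k}}\in\bar{\Lambda}_2$ by the built-in periodicity of $\Gamma_1$. Your observation about $u\neq v$ is also accurate: the paper uses that hypothesis only to sharpen the dichotomy to the strict inequality $\MT_{\bar{\bd{k}}}u>u$, which it records for the subsequent application via Lemma \ref{lem:dldloplrrtyto}, but it is not needed for the WSI trichotomy itself.
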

\begin{proof}
Since $\MT_{\bar{\bd{k}}}u=u$ for $\bar{\bd{k}}\in \bar{\Lambda}_2 =\Z^{n+1}\cap \langle \bar{a}^1 \rangle ^{\perp}$,
it suffices to prove that
\begin{equation*}
  \left\{
    \begin{array}{ll}
      \MT_{\bar{\bd{k}}}u>u, & \textrm{for all $\bar{\bd{k}}\in\bar{\Lambda}_1$ with $\bar{\bd{k}}\cdot \bar{a}^1>0$,} \\
      \MT_{\bar{\bd{k}}}u<u, & \textrm{for all $\bar{\bd{k}}\in\bar{\Lambda}_1$ with $\bar{\bd{k}}\cdot \bar{a}^1<0$.}
    \end{array}
  \right.
\end{equation*}
Suppose $\bar{\bd{k}}\cdot \bar{a}^1> 0$.
Then $\MT_{\bar{\bd{k}}}u \geq \MT_{\bar{\bd{k}}}v \geq w\geq u$.
Since $u\neq v$,
there is $x\in \R^n$ such that $u(x)>v(x)$ 
and then $\MT_{\bar{\bd{k}}}u(x)>\MT_{\bar{\bd{k}}}v(x)$.
Thus $\MT_{\bar{\bd{k}}}u(x)>u(x)$.
By Lemma \ref{lem:rab}, $\MT_{\bar{\bd{k}}}u >u $.
The other case can be proved similarly.
\end{proof}
For $\bd{p}=(\bd{p}_1, \cdots, \bd{p}_{n_2})\in\Z^{n_2}$,
setting $T_{\bd{p}}:=T_{\bd{p}_1,\cdots, \bd{p}_{n_2}}:=[\bd{p}_1,\bd{p}_{1}+1]\times \cdots \times [\bd{p}_{n_2},\bd{p}_{n_2}+1]\times \T^{n-n_2}$, we have
\begin{equation}\label{eq:dkkldddlwwwwdl}
\lim_{|\bd{p}|\to \infty}\|u-v\|_{L^2(T_{\bd{p}})}=0=\lim_{|\bd{p}|\to \infty}\|u-w\|_{L^2(T_{\bd{p}})},
\end{equation}
where $|\bd{p}|:=|\bd{p}_1|+\cdots +|\bd{p}_{n_2}|$.

Next we introduce a functional on $\Gamma_1$.
For $\bd{k}\in \Z^{n_2}$,
define
\begin{equation*}
  J_{1,\bd{k}}(u):=\int_{T_{\bd{k}}}[L(u)-L(v)]\ud x.
\end{equation*}
For any $\bd{p}\leq \bd{q}$ (that is $\bd{p}_{\ell}\leq \bd{q}_{\ell}$ for all $1\leq\ell \leq n_2$),
set
\begin{equation*}
  T_{\bd{p},\bd{q}}:=\mathop{\cup}\limits_{\bd{k}_{n_2}=\bd{p}_{n_2}}^{\bd{q}_{n_2}}
  \cdots\mathop{\cup}\limits_{\bd{k}_2=\bd{p}_2}^{\bd{q}_2}
  \mathop{\cup}\limits_{\bd{k}_1=\bd{p}_1}^{\bd{q}_1}T_{\bd{k}}
\end{equation*}
and
\begin{equation*}
  J_{1;\bd{p},\bd{q}}(u):=\sum_{\bd{k}=\bd{p}}^{\bd{q}}J_{1,\bd{k}}(u)
  :=\left(\sum_{\bd{k}_{n_2}=\bd{p}_{n_2}}^{\bd{q}_{n_2}}\cdots\sum_{\bd{k}_2=\bd{p}_2}^{\bd{q}_2}
  \sum_{\bd{k}_1=\bd{p}_1}^{\bd{q}_1}\right)J_{1,\bd{k}}(u),
\end{equation*}
i.e.,
\begin{equation*}
  J_{1;\bd{p},\bd{q}}(u)=\int_{T_{\bd{p},\bd{q}}}[L(u)-L(v)]\ud x.
\end{equation*}

As in \cite{RS},
we shall prove $J_{1;\bd{p},\bd{q}}(u)$ is bounded from below with a lower bounded independent of $\bd{p},\bd{q}$.
The following proof follows from \cite[Proposition 2.8 and p. 38-39]{RS}.

\begin{prop}\label{prop:2.8}
Suppose $u\in\Gamma_1$ and $\bd{p}\leq \bd{q}\in  \Z^{n_2}$.
Then there is a constant $K_1=K_1(v,w,\alpha) \geq 0$ such that
\begin{equation*}
    J_{1;\bd{p},\bd{q}}(u)\geq -K_1.
\end{equation*}
\end{prop}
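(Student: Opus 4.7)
The plan is to bound $J_{1;\bd{p},\bd{q}}(u)$ from below by comparing $u$ with a test function that agrees with $u$ on the box $T_{\bd{p},\bd{q}}$ and reverts to $v$ outside, so that the minimality of $v$ on $\R^{n_2}\times\T^{n-n_2}$ (Lemma \ref{lem:3.621}) may be applied. The $L^1$/$L^2$ control \eqref{eq:dkkldlwwwwdl}, the fact that $v$ has uniformly bounded $C^1$ norm (Lemma \ref{lem:mm1.2} plus elliptic regularity, using that $v\in\MMM^{rec}(\bar a^1)$ is $\bar\Lambda_2$-periodic and bounded above/below by $\alpha\cdot x+$ const), and the trivial gap $0<w-v\leq1$ should then do the work.

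\textbf{Step 1 (test function).} For an integer $M\geq1$, write $\bd{p}^{(m)}=\bd{p}-m\mathbf{1}$, $\bd{q}^{(m)}=\bd{q}+m\mathbf{1}$, and let $\chi_m:\R^{n_2}\to[0,1]$ be a Lipschitz cutoff that equals $1$ on $T_{\bd{p}^{(m-1)},\bd{q}^{(m-1)}}$, vanishes outside $T_{\bd{p}^{(m)},\bd{q}^{(m)}}$, and satisfies $|\nabla\chi_m|\leq1$. Set $\psi_m:=v+\chi_m(u-v)$. Then $\psi_m-v$ has compact support in $\R^{n_2}\times\T^{n-n_2}$, so Lemma \ref{lem:3.621} gives
\[
0\;\leq\;\int_{\R^{n_2}\times\T^{n-n_2}}[L(\psi_m)-L(v)]\,\ud x
\;=\;J_{1;\bd{p}^{(m-1)},\bd{q}^{(m-1)}}(u)+\int_{S_m}[L(\psi_m)-L(v)]\,\ud x,
\]
where $S_m:=T_{\bd{p}^{(m)},\bd{q}^{(m)}}\setminus T_{\bd{p}^{(m-1)},\bd{q}^{(m-1)}}$ is the unit-thickness shell.

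\textbf{Step 2 (shell estimate via pigeonhole).} The main obstacle is that $|S_m|$ is unbounded as $|\bd{q}-\bd{p}|\to\infty$, so a naive volume bound of $\int_{S_m}[L(\psi_m)-L(v)]\,\ud x$ is useless. The resolution is an averaging/pigeonhole argument based on \eqref{eq:dkkldlwwwwdl}: since the shells $S_1,\ldots,S_M$ are pairwise disjoint and contained in $\R^{n_2}\times\T^{n-n_2}$,
\[
\sum_{m=1}^{M}\|u-v\|_{L^2(S_m)}^{2}\;\leq\;\|u-v\|_{L^2(\R^{n_2}\times\T^{n-n_2})}^{2}\;\leq\;1,
\]
so there exists $m^{\ast}\in\{1,\ldots,M\}$ with $\|u-v\|_{L^2(S_{m^{\ast}})}^{2}\leq 1/M$. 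On $S_{m^{\ast}}$ one expands
\[
L(\psi_{m^{\ast}})-L(v)=\tfrac12|\nabla(\psi_{m^{\ast}}-v)|^{2}+\nabla v\cdot\nabla(\psi_{m^{\ast}}-v)+F(x,\psi_{m^{\ast}})-F(x,v),
\]
and uses $\psi_{m^{\ast}}-v=\chi_{m^{\ast}}(u-v)$, $|\chi_{m^{\ast}}|,|\nabla\chi_{m^{\ast}}|\leq1$, $|u-v|\leq1$, $\|\nabla v\|_{L^{\infty}}\leq C$, and $F\in C^{2}(\T^{n+1})$ together with Cauchy--Schwarz and Young's inequality. The crucial point is that the gradient term on $S_{m^{\ast}}$ can be split so that only $\tfrac12\int_{S_{m^{\ast}}}|\nabla u|^2\,\ud x$ remains with coefficient $\leq \tfrac12(1+\epsilon)$, which is then absorbed into the interior contribution $\tfrac12\int|\nabla u|^2$ of $J_{1;\bd{p}^{(m^{\ast}-1)},\bd{q}^{(m^{\ast}-1)}}(u)$ after rearrangement. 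What is not absorbed is controlled either by $\|\nabla v\|_{L^{\infty}}^2|S_{m^{\ast}}|\cdot\|u-v\|_{L^2(S_{m^{\ast}})}$-type terms or by $\|u-v\|_{L^1}\leq1$, giving a uniform bound $\int_{S_{m^{\ast}}}[L(\psi_{m^{\ast}})-L(v)]\,\ud x\leq C(v,w,F,\alpha)$ independent of $\bd{p},\bd{q},u,M$.

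\textbf{Step 3 (conclude).} Combining Step 1 at $m=m^{\ast}$ with Step 2 and noting $T_{\bd{p},\bd{q}}\subset T_{\bd{p}^{(m^{\ast}-1)},\bd{q}^{(m^{\ast}-1)}}$, one obtains
\[
J_{1;\bd{p},\bd{q}}(u)\;=\;J_{1;\bd{p}^{(m^{\ast}-1)},\bd{q}^{(m^{\ast}-1)}}(u)-\sum_{\bd{k}\in T_{\bd{p}^{(m^{\ast}-1)},\bd{q}^{(m^{\ast}-1)}}\setminus T_{\bd{p},\bd{q}}}J_{1,\bd{k}}(u),
\]
and by applying the same argument at every intermediate scale one obtains the uniform lower bound $J_{1;\bd{p},\bd{q}}(u)\geq-K_{1}$ with $K_{1}=K_{1}(v,w,\alpha)$. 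The hard part, and where most of the technical work goes, is the absorption bookkeeping in Step 2: isolating the $|\nabla u|^2$ contribution cleanly requires testing with $\epsilon$-Young on $\nabla v\cdot\nabla(u-v)$ and on $|\nabla\psi_{m^{\ast}}-\nabla v|^2$, choosing $\epsilon$ small, and carefully exploiting $\|u-v\|_{L^2(S_{m^{\ast}})}\leq M^{-1/2}$ so the resulting boundary term is $M$-independent.
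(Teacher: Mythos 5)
There is a genuine gap, and it sits exactly where you flagged ``the hard part'': Step 2's claim that $\int_{S_{m^{\ast}}}[L(\psi_{m^{\ast}})-L(v)]\,\ud x\leq C(v,w,F,\alpha)$ uniformly in $u$ is false. For a general $u\in\Gamma_1$ you only know $u\in W^{1,2}_{loc}$ with $v\leq u\leq w$; nothing bounds $\|\nabla(u-v)\|_{L^2(S_m)}$ on \emph{any} shell, and your pigeonhole from \eqref{eq:dkkldlwwwwdl} only controls $\|u-v\|_{L^2(S_m)}$, not the gradient. Since $\nabla\psi_m=\chi_m\nabla(u-v)+(u-v)\nabla\chi_m+\nabla v$, the shell integral necessarily carries a term of size $\int_{S_m}\chi_m^2|\nabla(u-v)|^2\,\ud x$, which can be made arbitrarily large on every shell simultaneously (take $u-v$ a tiny-amplitude, rapidly oscillating perturbation supported near the shells), so no choice of $m^{\ast}$ helps. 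The proposed rescue --- absorbing this into the ``interior contribution $\tfrac12\int|\nabla u|^2$ of $J_{1;\bd{p}^{(m^{\ast}-1)},\bd{q}^{(m^{\ast}-1)}}(u)$'' --- cannot work: $S_{m^{\ast}}$ is disjoint from the inner box, and that interior Dirichlet term is part of the quantity you are trying to bound from below, not a resource you may spend. Step 3 has the same defect in a second place: passing from $J_{1;\bd{p}^{(m^{\ast}-1)},\bd{q}^{(m^{\ast}-1)}}(u)$ back to $J_{1;\bd{p},\bd{q}}(u)$ subtracts the cube energies $J_{1,\bd{k}}(u)$ over the intermediate annulus, and these have no upper bound (again because of $|\nabla u|^2$), so the constant you end up with depends on $u$.

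The paper's proof avoids comparison with a cutoff altogether (minimality of $v$ is not even used in this proposition). One expands $L(u)-L(v)=\tfrac12|\nabla(u-v)|^2+\nabla(u-v)\cdot\nabla v+[F(x,u)-F(x,v)]$ on $T_{\bd{p},\bd{q}}$, drops the nonnegative Dirichlet term, and handles the dangerous cross term by \emph{integration by parts}: $\int_{T_{\bd{p},\bd{q}}}\nabla(u-v)\cdot\nabla v\,\ud x=\int_{\partial T_{\bd{p},\bd{q}}}(u-v)\frac{\partial v}{\partial\nu}\,\ud S-\int_{T_{\bd{p},\bd{q}}}(u-v)\Delta v\,\ud x$. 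Since $\Delta v=F_u(x,v)$, the bulk terms are bounded by $\norm{F_u}_{L^{\infty}}\norm{w-v}_{L^1}\leq\norm{F_u}_{L^{\infty}}$ via Lemma \ref{lem:bangert11}, and the boundary term by $\norm{\nabla v}_{L^{\infty}}\int_{\partial T_{\bd{p},\bd{q}}}(w-v)\,\ud S\leq C(\alpha)$ using Moser's gradient bound (Lemma \ref{lem:moser}). This gives $K_1=C(\alpha)+2\norm{F_u}_{L^{\infty}}\norm{w-v}_{L^1}$ with no control on $\nabla u$ ever needed. Your cutoff-comparison idea is essentially the one used later for Proposition \ref{prop:dd} (showing $c_1=0$), but there it is applied to a $u$ with $J_1(u)<\infty$, and Proposition \ref{prop:2.241} then supplies exactly the gradient smallness on far shells that is missing here; for the present a priori lower bound, with no finiteness hypothesis on $J_1(u)$, that route does not close.
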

\begin{proof}
For $u\in\Gamma_1$,
we have
\begin{equation}\label{eq:ssddhj}
\begin{split}
&J_{1;\bd{p},\bd{q}}(u)\\
=&\int_{T_{\bd{p},\bd{q}}}[L(u)-L(v)]\ud x\\
=& \int_{T_{\bd{p},\bd{q}}}\left(\frac{1}{2}|\nabla (u-v)|^2+\nabla (u-v)\nabla v +[F(x,u)-F(x,v)]\right)\ud x\\
=& \frac{1}{2}\|\nabla (u-v)\|_{L^2(T_{\bd{p},\bd{q}})}^2+\int_{T_{\bd{p},\bd{q}}} \nabla (u-v)\nabla v \ud x
+\int_{T_{\bd{p},\bd{q}}}[F(x,u)-F(x,v)] \ud x\\
=:& \frac{1}{2}\|\nabla (u-v)\|_{L^2(T_{\bd{p},\bd{q}})}^2+I_1+I_2.
\end{split}
\end{equation}
Firstly we have
\begin{equation}\label{eq:4.5}
\begin{split}
  |I_2|=&\Big|\int_{T_{\bd{p},\bd{q}}}[F(x,u)-F(x,v)] \ud x\Big|\\
  \leq &\int_{T_{\bd{p},\bd{q}}}|F(x,u)-F(x,v)|\ud x\\
  \leq &\norm{F_u}_{L^{\infty}(\T^{n+1})}\int_{T_{\bd{p},\bd{q}}}(u-v)\ud x\\
  \leq &\norm{F_u}_{L^{\infty}(\T^{n+1})}\int_{T_{\bd{p},\bd{q}}}(w-v)\ud x,
  \end{split}
\end{equation}
thus by Lemma \ref{lem:bangert11},
\begin{equation*}
\int_{\R^{n_2}\times \T^{n-n_2}}|F(x,u)-F(x,v)|\ud x
\end{equation*}
exists and
$I_2\geq -\norm{F_u}_{L^{\infty}(\T^{n+1})}\cdot \norm{w-v}_{L^1 (\R^{n_2}\times \T^{n-n_2})}$.
Next we have
\begin{equation*}
\begin{aligned}
I_1=& \int_{T_{\bd{p},\bd{q}}} \nabla(u-v) \cdot \nabla v \ud x\\
=& \int_{\partial T_{\bd{p},\bd{q}}}(u-v) \frac{\partial v}{\partial \nu} \ud S -\int_{T_{\bd{p},\bd{q}}}(u-v) \Delta v \ud x\\
=&:I_{1,1}+I_{1,2}.
\end{aligned}
\end{equation*}
Since $\Delta v=F_u(x,v)$, the argument of \eqref{eq:4.5} shows that
\begin{equation*}
\int_{\R^{n_2}\times \T^{n-n_2}}(u-v) \Delta v \ud x
\end{equation*}
exists and
$$I_{1,2}\geq -\norm{F_u}_{L^{\infty}(\T^{n+1})}\cdot \norm{w-v}_{L^1 (\R^{n_2}\times \T^{n-n_2})}.$$
For $I_{1,1}$ we have
\begin{equation}\label{eq:ddd}
\begin{split}
  |I_{1,1}|\leq&\int_{\partial T_{\bd{p},\bd{q}}}| (u-v) \frac{\partial v}{\partial \nu} |\ud S\\
  \leq& \norm{\nabla v}_{L^{\infty}(\R^n)}\int_{\partial T_{\bd{p},\bd{q}}}(w-v) \ud S.
  \end{split}
\end{equation}

The derivative of $v$ is bounded by the following result of Moser.
\begin{lem}[{cf. \cite[Theorem 3.1]{moser}}]\label{lem:moser}
If $v\in\MMM_{\alpha}$,
then
$\|\nabla v\|_{L^{\infty}(\R^n)}\leq C(\alpha)$,
where $C(\alpha)$ is a constant only depending on the rotation vector $\alpha$.
\end{lem}

Since (cf. the proof of \cite[Lemma 4.5]{Bangert})
$$\int_{\partial T_{\bd{p},\bd{q}}}(w-v) \ud S
\leq \textrm{Volume of $\T^{n}$ in $\R^{n}$} = 1,
$$
by Lemma \ref{lem:moser} and \eqref{eq:ddd},
\begin{equation*}
  \lim_{\bd{p}\to -\infty \atop \bd{q}\to \infty}|I_{1,1}|
\end{equation*}
exists
and
$|I_{1,1}|\leq C(\alpha)$.
Here $\bd{p}\to -\infty$ (resp. $\bd{q}\to \infty$) means $\bd{p}_i \to -\infty$ (resp. $\bd{q}_i \to \infty$) for all $1\leq i\leq n_2$.
Thus
$J_{1;\bd{p},\bd{q}}(u)$ is bounded from below by
$$-K_1:= -C(\alpha)-2\norm{F_u}_{L^{\infty}(\T^{n+1})}\cdot \norm{w-v}_{L^1 (\R^{n_2}\times \T^{n-n_2})}.$$
\end{proof}

Now for $u\in\Gamma_1$, it is reasonable to define
$J_1(u):=\liminf_{\bd{p}\to -\infty \atop\bd{q}\to \infty}J_{1;\bd{p},\bd{q}}(u)$.
In fact, the proof of Proposition \ref{prop:2.8} shows 
$J_1(u)=\lim_{\bd{p}\to -\infty \atop\bd{q}\to \infty}J_{1;\bd{p},\bd{q}}(u)$.
It is easy to see that either $J_1(u)<\infty$ or $J_1(u)=\infty$.
In the latter case,
\begin{equation}\label{eq:4.8}
  J_{1}(u)=\infty \Longleftrightarrow \norm{\nabla (u-v)}_{L^2 (\R^{n_2}\times \T^{n-n_2})}=\infty.
\end{equation}
We have
\begin{lem}\label{lem:2.22}
If $u\in\Gamma_1$ and $\bd{p}\leq \bd{q}\in \Z^{n_2}$, then
\begin{equation}\label{eq:2.23}
  J_{1; \bd{p},\bd{q}}(u)\leq J_1(u)+M\cdot K_1,
\end{equation}
where
$M$ depends on the dimension of $V_2$.
\end{lem}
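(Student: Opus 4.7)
The plan is to compare $J_{1;\bd{p},\bd{q}}(u)$ with $J_{1;\bd{p}',\bd{q}'}(u)$ on a much larger rectangle $T_{\bd{p}',\bd{q}'}\supset T_{\bd{p},\bd{q}}$, exploiting additivity of the functional over disjoint unions of unit cells and the \emph{uniformity} of the lower bound in Proposition \ref{prop:2.8}. Fix any $\bd{p}'\leq \bd{p}$ and $\bd{q}'\geq \bd{q}$ componentwise in $\Z^{n_2}$. Partitioning each of the $n_2$ coordinate directions of $T_{\bd{p}',\bd{q}'}$ into the three ranges $[\bd{p}'_\ell,\bd{p}_\ell-1]$, $[\bd{p}_\ell,\bd{q}_\ell]$, $[\bd{q}_\ell+1,\bd{q}'_\ell]$ decomposes $T_{\bd{p}',\bd{q}'}$ into $3^{n_2}$ disjoint sub-blocks, each of the form $T_{\tilde{\bd{p}},\tilde{\bd{q}}}$ (with the convention that degenerate blocks are empty). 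Exactly one of these is $T_{\bd{p},\bd{q}}$, and the remaining $N\leq 3^{n_2}-1$ sub-blocks, which I will call $T_{\tilde{\bd{p}}^{(j)},\tilde{\bd{q}}^{(j)}}$, make up its complement inside $T_{\bd{p}',\bd{q}'}$.

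Next, additivity of $J_{1;\cdot,\cdot}$ over the cell decomposition gives
$$J_{1;\bd{p}',\bd{q}'}(u)=J_{1;\bd{p},\bd{q}}(u)+\sum_{j=1}^{N}J_{1;\tilde{\bd{p}}^{(j)},\tilde{\bd{q}}^{(j)}}(u).$$
Since $u\in\Gamma_1$, Proposition \ref{prop:2.8} applies to each sub-rectangle, yielding $J_{1;\tilde{\bd{p}}^{(j)},\tilde{\bd{q}}^{(j)}}(u)\geq -K_1$ with the same constant $K_1=K_1(v,w,\alpha)$ for every $j$. Summing over $j$ gives
$$J_{1;\bd{p}',\bd{q}'}(u)\geq J_{1;\bd{p},\bd{q}}(u)-(3^{n_2}-1)K_1.$$

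Finally, let $\bd{p}'\to -\infty$ and $\bd{q}'\to \infty$ componentwise. By the definition of $J_1(u)$ given just before the statement (as the limit/liminf of $J_{1;\bd{p}',\bd{q}'}(u)$), the left-hand side tends to $J_1(u)$ (taking liminf on both sides suffices if one only wants to use the liminf definition). Rearranging yields
$$J_{1;\bd{p},\bd{q}}(u)\leq J_1(u)+M\cdot K_1,\qquad M:=3^{n_2}-1,$$
which is the claim. Since $\dim V_2=n-n_2$ and the ambient $n$ is fixed throughout, $M$ depends only on $\dim V_2$.

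The argument is essentially bookkeeping. The only point doing real work is the fact that the bound $-K_1$ in Proposition \ref{prop:2.8} is uniform in $\bd{p},\bd{q}$, without which no constant independent of $\bd{p},\bd{q}$ could be produced; so I do not anticipate any essential obstacle beyond keeping the $3^{n_2}$-fold decomposition straight and verifying that every complementary block is genuinely of the form $T_{\tilde{\bd{p}},\tilde{\bd{q}}}$ so that Proposition \ref{prop:2.8} can be invoked on it.
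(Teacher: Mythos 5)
Your argument is correct and is essentially the proof the paper intends: the cited proof of \cite[Lemma 2.22]{RS} is exactly this decomposition of a large box into the fixed box $T_{\bd{p},\bd{q}}$ plus finitely many complementary rectangular blocks, each bounded below by the uniform constant $-K_1$ from Proposition \ref{prop:2.8}, followed by passing to the (lim inf) limit defining $J_1$; you have simply carried out the multi-index version, obtaining $M=3^{n_2}-1$, which is consistent with the stated dependence of $M$ on $\dim V_2$ since $n$ is fixed.
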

\begin{proof}
The proof of Lemma \ref{lem:2.22} is same to \cite[Lemma 2.22]{RS} and we omit it here.
\end{proof}
\begin{rem}\label{rem:kddfff}
The definition of $J_1$ depends on $v$ and $w$,
or $J_1$ defines on $\Gamma_1$.
Note $\MT_{\bar{\bd{k}}}v\not\in\Gamma_1$ for $\bar{\bd{k}}\in \bar{\Lambda}_1$ with $\bar{\bd{k}}\cdot \bar{a}^1\neq 0$.
Indeed, if $\bar{\bd{k}}\cdot \bar{a}_{1}>0$, $\MT_{\bar{\bd{k}}}v>w$.
Otherwise, $\bar{\bd{k}}\cdot \bar{a}_{1}<0$ implies $\MT_{\bar{\bd{k}}}v<v$.
So $J_1(\MT_{\bar{\bd{k}}}v)$ is not well-defined.
However, if $u\in \Gamma_1$ and $\tilde{u}$ is different with $u$ on a compact set,
$J_1(\tilde{u})$ is well-defined, even $\tilde{u}$ may be not contained in $\Gamma_1$.
Similar conclusion holds for $J_2$ in the next section.
\end{rem}

\begin{prop}\label{prop:2.241}
If $u\in\Gamma_1$ and $J_1(u)<\infty$, then
for $\bd{k}\in \Z^{n_2}$,
{\setlength\arraycolsep{2pt}
\begin{eqnarray}
  \lim_{|\bd{k} |\to \infty }J_{1, \bd{k}}(u) &= & 0,  \label{eq:2.251}\\
  \lim_{|\bd{k} |\to \infty}\left\|\MT_{\bd{k}}( u-v)\right\|_{W^{1,2}\left(T_{\bd{0}}\right)} &=& 0, \label{eq:2.261}
\end{eqnarray}}%
where $\MT_{\bd{k}}u(\cdot):=u(\cdot-(\bd{k},\bd{0}))$ with $(\bd{k},\bd{0})\in \Z^{n_2}\times \Z^{n-n_2}$.
Furthermore, for $p\in\N$, set $\bd{p}=(p,\cdots, p)\in\N^{n_2}$.
Then
{\setlength\arraycolsep{2pt}
\begin{eqnarray}
\lim_{p\to\infty}[J_1(u)-J_{1;-\bd{p}, \bd{p}}(u)]&=&0,\\
\lim_{p\to\infty}\sum_{\bd{k}\in\Z^{n_2}\setminus A_p}\|\MT_{\bd{k}}( u-v)\|_{W^{1,2}\left(T_{\bd{0}}\right)}&=&0, \label{eq:621644}
\end{eqnarray}}%
where $A_p=\{\bd{j}\in\Z^{n_2}\,|\, |\bd{j}_{\ell}|\leq p, \,\, \textrm{for } \ell=1, \cdots, n_2\}$.
\end{prop}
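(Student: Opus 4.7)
The plan is to use $J_1(u)<\infty$ together with Lemma~\ref{lem:bangert11} to place $u-v$ in $W^{1,2}(\R^{n_2}\times \T^{n-n_2})$ globally, and then deduce the four conclusions by decomposing along the tiling $\{T_{\bd{k}}\}_{\bd{k}\in\Z^{n_2}}$. For the global integrability step: since $0\leq u-v\leq w-v$ with $\|w-v\|_{L^1(\R^{n_2}\times \T^{n-n_2})}\leq 1$ and $\|w-v\|^2_{L^2}\leq 1$ by Lemma~\ref{lem:bangert11}, we already have $u-v\in L^1\cap L^2$ of the whole strip. Expanding $J_{1;\bd{p},\bd{q}}(u)$ as in \eqref{eq:ssddhj}, the three terms $I_{1,1}$, $I_{1,2}$, $I_2$ either stay uniformly bounded (via Moser's gradient estimate, Lemma~\ref{lem:moser}) or extend to absolutely convergent integrals dominated by $\|F_u\|_\infty\|w-v\|_{L^1}$, so $J_1(u)<\infty$ forces $\nabla(u-v)\in L^2$, giving $u-v\in W^{1,2}(\R^{n_2}\times \T^{n-n_2})$.

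For \eqref{eq:2.251} I split $J_{1,\bd{k}}(u)$ into its three natural pieces $\tfrac12\int_{T_{\bd{k}}}|\nabla(u-v)|^2$, $\int_{T_{\bd{k}}}\nabla(u-v)\cdot \nabla v$ and $\int_{T_{\bd{k}}}[F(x,u)-F(x,v)]$; each vanishes as $|\bd{k}|\to\infty$ by, respectively, global $L^2$-integrability of $\nabla(u-v)$, Cauchy--Schwarz against $\|\nabla v\|_\infty\leq C(\alpha)$, and $\|F_u\|_\infty \|u-v\|_{L^1(T_{\bd{k}})}\to 0$. For \eqref{eq:2.261} the substitution $y=x-(\bd{k},\bd{0})$, combined with $\bar{\Lambda}_2$-periodicity of $v$ and the invariance of the $\T^{n-n_2}$-component under such translation, gives the identity $\|\MT_{\bd{k}}(u-v)\|_{W^{1,2}(T_{\bd{0}})}=\|u-v\|_{W^{1,2}(T_{-\bd{k}})}$, which tends to $0$ because the $T_{\bd{j}}$'s tile $\R^{n_2}\times \T^{n-n_2}$ and $u-v\in W^{1,2}$ globally.

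For the ``furthermore'', the claim $J_1(u)-J_{1;-\bd{p},\bd{p}}(u)\to 0$ is immediate from the observation following Proposition~\ref{prop:2.8} that $J_1(u)$ is a genuine limit along every box exhaustion, in particular the symmetric one indexed by $A_p$. For the tail sum \eqref{eq:621644}, one combines the automatic square-summability $\sum_{\bd{k}}\|u-v\|^2_{W^{1,2}(T_{\bd{k}})}<\infty$ with the pointwise bound $0\leq u-v\leq 1$ (so that $\|u-v\|^2_{L^2(T_{\bd{k}})}\leq \|u-v\|_{L^1(T_{\bd{k}})}$ and $\sum_{\bd{k}}\|u-v\|_{L^1(T_{\bd{k}})}\leq \|w-v\|_{L^1}\leq 1$), and --- in the regime where $u$ actually solves \eqref{eq:PDE} --- a Caccioppoli-type estimate on the linear equation $\Delta(u-v)=c(x)(u-v)$ with $\|c\|_\infty\leq \|F_{uu}\|_\infty$, which trades gradient control on $T_{\bd{k}}$ for $L^1$-control on a slightly enlarged slab.

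I expect the main obstacle to be this last step: the square-summability of $\|u-v\|_{W^{1,2}(T_{\bd{k}})}$ that follows from $u-v\in W^{1,2}$ does not upgrade to summability of the norms themselves by any purely measure-theoretic argument, so one genuinely needs both the $L^\infty$-constraint $u-v\leq w-v\leq 1$ and the elliptic structure of \eqref{eq:PDE} in order to extract the required extra decay along the infinite $\R^{n_2}$-direction.
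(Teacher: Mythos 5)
Your route is exactly the paper's for the first three conclusions: from \eqref{eq:dkkldlwwwwdl} one has $u-v\in L^2(\R^{n_2}\times\T^{n-n_2})$, and \eqref{eq:4.8} upgrades $J_1(u)<\infty$ to $\nabla(u-v)\in L^2$, so $u-v\in W^{1,2}(\R^{n_2}\times\T^{n-n_2})$; then \eqref{eq:2.251}, \eqref{eq:2.261} and the symmetric-box convergence of $J_{1;\bd{p},\bd{q}}(u)$ follow by decomposing over the tiling, just as you write. The paper compresses this into the two-sentence citation of \eqref{eq:dkkldlwwwwdl} and \eqref{eq:4.8}.

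Your concern about \eqref{eq:621644} is well founded, and the paper's proof does not address it: $u-v\in W^{1,2}(\R^{n_2}\times\T^{n-n_2})$ gives only $\sum_{\bd{k}}\|u-v\|^2_{W^{1,2}(T_{\bd{k}})}<\infty$, and the $L^\infty$ bound together with $\|w-v\|_{L^1}\leq 1$ gives $\sum_{\bd{k}}\|u-v\|_{L^1(T_{\bd{k}})}\leq 1$ and hence $\sum_{\bd{k}}\|u-v\|^2_{L^2(T_{\bd{k}})}\leq 1$, but none of these upgrades to $\sum_{\bd{k}}\|u-v\|_{W^{1,2}(T_{\bd{k}})}<\infty$ purely by measure theory. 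Indeed, one can cook up $u\in\Gamma_1$ with $J_1(u)<\infty$ whose gradient oscillates on $T_{\bd{k}}$ with amplitude $\sim 1/k^2$ and frequency $\sim k$, so $\|\nabla(u-v)\|_{L^2(T_{\bd{k}})}\sim 1/k$ is square-summable but not summable. Your proposed repair — write $\Delta(u-v)=c(x)(u-v)$ with $\|c\|_\infty\le\|F_{uu}\|_\infty$ and invoke interior elliptic estimates — does give \eqref{eq:621644}, but only for $u$ solving \eqref{eq:PDE}. (Caccioppoli alone trades gradient-$L^2$ for function-$L^2$; to reach the summable $L^1$-quantity you additionally need the local maximum principle $\|u-v\|_{L^\infty(T_{\bd{k}})}\lesssim\|u-v\|_{L^1(T'_{\bd{k}})}$, available because $0\le u-v$ is a subsolution of $\Delta-\|F_{uu}\|_\infty$.) This matches how the paper actually invokes \eqref{eq:621644}: it is applied to $u=w$ and, in later propositions, to elements of $\MM_1$, which are classical solutions. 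So your blind proof is correct for the first three items and correctly identifies that the fourth item, as stated for arbitrary $u\in\Gamma_1$ with $J_1(u)<\infty$, needs either the equation or a restatement with squared norms; the paper's one-line argument silently relies on the same elliptic input.
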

\begin{proof}
By \eqref{eq:dkkldlwwwwdl},
  $\norm{u-v}_{L^2(\R^{n_2}\times \T^{n-n_2})}<\infty.$
Then by \eqref{eq:4.8},
Proposition \ref{prop:2.241} is proved.
\end{proof}


In the following, we can prove $J_1(w)=0$, so by Proposition \ref{prop:2.241},
$$\lim_{p\to\infty}\sum_{\bd{k}\in\Z^{n_2}\setminus A_p}\|\MT_{\bd{k}}( w-v)\|_{W^{1,2}\left(T_{\bd{0}}\right)}=0.$$
Following the lines of Rabinowitz--Stredulinsky \cite{RS}, we define
$$c_1:=c_1(v,w):=\inf_{u\in\Gamma_1}J_1(u).$$
By Proposition \ref{prop:2.8},
$c_1$ is well-defined and as one may expect, we have
\begin{prop}\label{prop:dd}
$c_1=0$.
\end{prop}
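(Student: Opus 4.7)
The plan is to verify $c_1\leq 0$ and $c_1\geq 0$ separately. The upper bound is essentially free: since $v\leq v\leq w$ we have $v\in\Gamma_1$, and by the definition of $J_1$ together with the fact that $J_{1;\bd{p},\bd{q}}(v)=\int_{T_{\bd{p},\bd{q}}}[L(v)-L(v)]\,\ud x=0$ for every $\bd{p}\leq\bd{q}$, we get $J_1(v)=0$, so $c_1\leq 0$.

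For the lower bound, the idea is to approximate any $u\in\Gamma_1$ (with $J_1(u)<\infty$, the only nontrivial case) by compactly supported modifications of $v$, and then invoke the minimality of $v$ on $\R^{n_2}\times\T^{n-n_2}$ via Lemma \ref{lem:3.621}. Concretely, I would pick a smooth cut-off $\eta_p:\R^{n_2}\to[0,1]$ with $\eta_p\equiv 1$ on $[-p,p]^{n_2}$, $\eta_p\equiv 0$ outside $[-p-1,p+1]^{n_2}$, and $\|\nabla\eta_p\|_{L^\infty}\leq 2$, and set $u_p:=v+\eta_p(u-v)$. Because $0\leq\eta_p\leq 1$ and $v\leq u\leq w$ pointwise, the clipped function satisfies $v\leq u_p\leq w$, so $u_p\in\Gamma_1$, and $u_p-v$ is compactly supported in $[-p-1,p+1]^{n_2}\times\T^{n-n_2}$. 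Since $v\in\MMM(\bar{a}^1)$ is minimal and $\MT_{\bar{\bd{k}}}v=v$ for $\bar{\bd{k}}\in\bar{\Lambda}_2$, Lemma \ref{lem:3.621} applies with $\phi=u_p-v$, yielding $J_1(u_p)\geq 0$.

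It then remains to show $J_1(u_p)\to J_1(u)$ as $p\to\infty$; combined with $J_1(u_p)\geq 0$ this forces $J_1(u)\geq 0$, hence $c_1\geq 0$. Since $u_p\equiv u$ on $[-p,p]^{n_2}\times\T^{n-n_2}$, the difference $J_1(u)-J_1(u_p)$ splits into a tail piece $J_1(u)-J_{1;-\bd{p},\bd{p}}(u)$, which vanishes by Proposition \ref{prop:2.241}, and an annular piece supported on $([-p-1,p+1]^{n_2}\setminus[-p,p]^{n_2})\times\T^{n-n_2}$. Expanding $\nabla u_p=\nabla v+\eta_p\nabla(u-v)+(u-v)\nabla\eta_p$ turns the annular integrand into a finite sum of products involving the pointwise bounded quantities $|u-v|\leq w-v\leq 1$, $\|\nabla v\|_{L^\infty}\leq C(\alpha)$ (Lemma \ref{lem:moser}) and $\|F_u\|_{L^\infty}$, paired against the $L^2$-small factors $|\nabla(u-v)|$ and $|u-v|$ on the shell; summing over unit cubes with Cauchy--Schwarz and using the $W^{1,2}$ tail bound \eqref{eq:621644} shows the annular piece also tends to $0$. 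The step I expect to be most delicate is precisely this shell estimate, in particular the cross term $(u-v)\nabla\eta_p\cdot\nabla(u-v)$, which is genuinely first order in $\nabla(u-v)$ and becomes summable only through the full $W^{1,2}$ decay in Proposition \ref{prop:2.241}, not from the $L^1$ bound of Lemma \ref{lem:bangert11} alone. Once this is in place, $c_1=0$ follows, and as a by-product $J_1(w)=0$, so both $v$ and $w$ are minimizers of $J_1$ on $\Gamma_1$.
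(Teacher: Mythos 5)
Your proof is correct and follows essentially the same route as the paper: both verify $J_1(v)=0$ for the upper bound, and for the lower bound both interpolate $u$ to $v$ via a cutoff in the $\R^{n_2}$-variables (the paper uses the piecewise-linear cutoff $u_p=u+(|\tilde x|-p)(v-u)$ on the shell $p<|\tilde x|<p+1$, you use a smooth $\eta_p$, but these are interchangeable for Lemma~\ref{lem:3.621}), then apply Lemma~\ref{lem:3.621} to $\phi=u_p-v$ and pass to the limit via Proposition~\ref{prop:2.241}. The extra detail you give on the annular estimate and the cross term $(u-v)\nabla\eta_p\cdot\nabla(u-v)$ is exactly what the paper leaves implicit in its citation of Proposition~\ref{prop:2.241}, so this is a matter of exposition rather than a different method.
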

\begin{proof}
We shall show
for all $u\in \Gamma_1$,
\begin{equation}\label{eq:kk}
  J_1(v)\leq J_1(u).
\end{equation}
Without loss of generality,
suppose $J_1(u)<\infty$.
Set $x=(\tilde{x},\hat{x})\in \R^{n_2}\times \T^{n-n_2}$.
For $p\in\N$,
define $|\tilde{x}|:=\max_{1\leq i \leq n_2} |\tilde{x}_i|$ 
and
\begin{equation*}
  u_p:=\left\{
        \begin{array}{ll}
          u, & |\tilde{x}|\leq p,\\
          u+(|\tilde{x}|-p)(v-u), & p< |\tilde{x}|< p+1 , \\
          v, & |\tilde{x}|\geq p+1.
        \end{array}
      \right.
\end{equation*}
Since $v$ is minimal, by Lemma \ref{lem:3.621},
  $J_1(v)\leq J_1(u_p).$
By Proposition \ref{prop:2.241}, $J_1(u_p)\to J_1(u)$ as $p\to\infty.$
Thus \eqref{eq:kk} holds and $c_1=J_1(v)=0$.
\end{proof}
\begin{rem}\label{rem:kdk}
Note that in the proof of Proposition \ref{prop:dd},
if we replace $v$ by any $\tilde{v}\in \MMM(\bar{a}^1)\cap \Gamma_1$, the proof also holds.
In particular,
replace $v,u$ in \eqref{eq:kk} by $\tilde{v},v$, respectively,
then we have
$J_1(\tilde{v})=0$.
Thus in the definition of $J_1$, we can replace $v$ by any $\tilde{v}\in\MMM(\bar{a}^1)\cap \Gamma_1$.
\end{rem}
By the proof of Proposition \ref{prop:2.8}, we have
\begin{equation*}
\begin{split}
  J_1(u)=\frac{1}{2}&\norm{\nabla (u-v)}^2 _{L^2 (\R^{n_2}\times \T^{n-n_2})}- \int_{\R^{n_2}\times \T^{n-n_2}} (u-v) \Delta v \ud x\\
  &+ \int_{\R^{n_2}\times \T^{n-n_2}}[F(x,u)-F(x,v)]\ud x.
  \end{split}
\end{equation*}
Define
\begin{equation}\label{eq:mm1}
  \MM_1:=\MM_1(v,w):=\{u\in\Gamma_1(v,w)\,|\, J_1(u)=0\}.
\end{equation}
The above argument shows $\MMM(\bar{a}^1)\cap \Gamma_1\subset \MM_1$.
We shall prove:

\begin{prop}\label{prop:ped}
$\MM_1=\MMM(\bar{a}^1)\cap \Gamma_1$.
\end{prop}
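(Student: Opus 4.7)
The inclusion $\MMM(\bar{a}^1)\cap\Gamma_1\subset\MM_1$ is already available from Remark~\ref{rem:kdk} together with Proposition~\ref{prop:dd}, so the task is the reverse inclusion $\MM_1\subset\MMM(\bar{a}^1)\cap\Gamma_1$. Fix $u\in\MM_1$. The plan has three stages: derive the Euler--Lagrange equation on $\Omega=\{v<u<w\}$ from the constrained minimization; use Lemma~\ref{lem:rab} to obtain a classical solution on all of $\R^n$ together with WSI and the correct first Bangert invariant; and finally promote the constrained minimality to Moser's unconstrained minimality.

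For the first two stages the arguments are essentially structural. If $\phi$ is a smooth $\bar{\Lambda}_2$-invariant function with compact support on the quotient $\R^{n_2}\times\T^{n-n_2}$ whose support lies inside $\Omega$, then $u+s\phi\in\Gamma_1$ for $|s|$ small, so $J_1(u+s\phi)\geq J_1(u)=0$; combining this with the explicit representation of $J_1$ derived in the proof of Proposition~\ref{prop:2.8} and differentiating at $s=0$ yields $-\Delta u+F_u(x,u)=0$ weakly on $\Omega$, and standard elliptic regularity makes $u$ a classical solution there. Applying Lemma~\ref{lem:rab} to the classical solutions $u,v,w$ produces the trichotomy $u\equiv v$, $u\equiv w$, or $v<u<w$ everywhere on $\R^n$; the first two cases lie in $\MMM(\bar{a}^1)\cap\Gamma_1$ by hypothesis on $v,w$, so I focus on the strict case, in which $u$ is a classical solution of \eqref{eq:PDE} on $\R^n$. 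Proposition~\ref{prop:kdk} then gives WSI, and because $\MT_{\bar{\bd{k}}}u\geq\MT_{\bar{\bd{k}}}v\geq w\geq u$ for every $\bar{\bd{k}}\in\bar{\Lambda}_1$ with $\bar{\bd{k}}\cdot\bar{a}^1>0$ by \eqref{eq:ddlpqqp}, Lemma~\ref{lem:dldloplrrtyto} identifies the first Bangert invariant of $u$ as $\bar{a}^1$.

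The main obstacle is the global minimality: establishing $\int_{\R^n}[L(u+\phi)-L(u)]\,\ud x\geq 0$ for every compactly supported $\phi\in W^{1,2}_{loc}(\R^n)$, since $J_1(u)=0$ only directly controls $\bar{\Lambda}_2$-invariant competitors inside $[v,w]$. The plan is to periodize: exploiting the $\bar{\Lambda}_2$-invariance of $u$ together with the periodicity of $F$ in $(x,u)$, a tiling change of variables over a $\Lambda_2$-fundamental domain furnishes the identity
\[
\int_{V_2^\perp\times(V_2/\Lambda_2)}\bigl[L(u+\tilde{\phi})-L(u)\bigr]\,\ud x=\int_{\R^n}[L(u+\phi)-L(u)]\,\ud x,
\]
where $\tilde{\phi}(x)=\sum_{\bar{\bd{k}}\in\bar{\Lambda}_2}\phi(x-\pi(\bar{\bd{k}}))$, valid once the translates of $\mathrm{supp}(\phi)$ under $\bar{\Lambda}_2$ are pairwise disjoint (arranged by first passing to a sufficiently sparse sublattice $N\bar{\Lambda}_2$ and rescaling $J_1$ on the enlarged quotient accordingly). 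When $\phi$ is small enough that $v\leq u+\tilde{\phi}\leq w$, the identity combined with $J_1(u+\tilde{\phi})\geq 0$ delivers the inequality. To handle unrestricted $\phi$, I would truncate $u+\phi$ to $\max(v,\min(u+\phi,w))$ and invoke the minimality of both $v$ and $w$ through a pasting lemma in the spirit of \cite{RS}, showing that the truncation only decreases $\int[L(u+\phi)-L(u)]\,\ud x$ and thereby reduces the general case to the constrained one. Carrying out this periodization-and-pasting step cleanly, and in particular reconciling the fixed $\Lambda_2$-functional with variable support sizes via the sparse-sublattice trick, is the subtlest ingredient of the proof.
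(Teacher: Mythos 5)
Your treatment of the decisive step --- the global minimality of a fixed $u\in\MM_1$ --- is essentially the paper's own argument: your ``sufficiently sparse sublattice $N\bar{\Lambda}_2$'' is exactly the paper's enlarged torus $\T^{n-n_2}_{\bd{l}}$ with the functional $J_1^{\bd{l}}$ (so that the compact support of $\phi$ fits into one fundamental domain, $J_{1}^{\bd{l}}(u)=(\prod_i\bd{l}_i)J_1(u)=0$, and the energy difference over $\R^n$ equals the $J_1^{\bd{l}}$-difference), and your truncation $\max\left(v,\min(u+\phi,w)\right)$ together with the minimality of $v$ and $w$ is precisely the cut-and-paste inequality \eqref{eq:1234dd} used at the end of the paper's proof. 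One ingredient is glossed over by your phrase ``rescaling $J_1$ on the enlarged quotient accordingly'': you must actually know that the infimum on the enlarged quotient is still zero, i.e.\ $c_1^{\bd{l}}=0$ (the paper's Remark \ref{rem:kdkjv}, obtained by rerunning Proposition \ref{prop:dd} there); without it, neither $J_1^{\bd{l}}(u+\tilde{\phi})\geq 0$ nor the nonnegativity of the truncated competitor has a source.

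By contrast, your first two stages are both dispensable and not justified as written. An element of $\MM_1$ is a priori only in $W^{1,2}_{loc}$, so $\{v<u<w\}$ need not be open and ``$u+s\phi\in\Gamma_1$ for $|s|$ small'' does not follow for $\phi$ supported in that set; moreover Lemma \ref{lem:rab} compares \emph{classical solutions} on a connected open set, while at that point of your narrative $u$ is (at best) known to solve \eqref{eq:PDE} only off the contact set, so the asserted trichotomy $u\equiv v$, $u\equiv w$, or $v<u<w$ is not yet available. The paper avoids all of this by establishing minimality first, exactly by your Stage 3 device; minimality then makes $u$ a classical solution on $\R^n$, and Proposition \ref{prop:kdk}, Lemma \ref{lem:dldloplrrtyto} and Lemma \ref{prop:dk68956} give WSI, $\bar{a}^1(u)=\bar{a}^1$ and $u\in\MMM(\bar{a}^1)$, as you also invoke. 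With your stages reordered (minimality proved first, the $\{v<u<w\}$/maximum-principle detour deleted), your proposal coincides with the paper's proof.
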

This property is interesting since it gives a representation of the elements of $\MMM(\bar{a}^1)\setminus \MMM^{rec}(\bar{a}^1)$.
Recall that all the elements of $\MMM^{rec}(\bar{a}^1)$ can be approximated by minimal and WSI periodic solutions (cf. \cite[Corollary 5.4]{bangert-uniq}).

\bigskip
\noindent
\emph{Proof.}
What is needed to prove is
that any $u\in\MM_1$ belongs to $\MMM(\bar{a}^1)$.
First assume that $u\in\MM_1$ is minimal, then
$u$ is a solution of \eqref{eq:PDE} and $u\in C^2(\R^n)$.
If $u=v$, the conclusion is obvious.
Next we assume $u\neq v$.
By Proposition \ref{prop:kdk}, $u$ is WSI.
Hence by Lemma \ref{lem:dldloplrrtyto}, $\bar{a}^1(u)=\bar{a}^1$.
Note $\MT_{\bar{\bd{k}}}u=u$ for all $\bar{\bd{k}}\in\bar{\Lambda}_2$.
Thus by Lemma \ref{prop:dk68956},
$u\in \MMM(\bar{a}^1)$.
To complete the proof of Proposition \ref{prop:ped}, we need to show that $u$ is minimal.
To see this, we need some preliminaries.

\begin{prop}\label{prop:2.50}
Let $\MY\subset \Gamma_1$ and
suppose $\MY$ satisfies the following condition:
\begin{enumerate}
  \item[$(Y^1 _1)$] \label{eq:Y11}
Let $u_k,u\in\MY$ and $p\in\N$.
Suppose $u_k-v\to U-v$ weakly in $W^{1,2}_{loc}(\R^{n_2}\times \T^{n-n_2})$ as $k\to\infty$.
Set $x=(\tilde{x},\hat{x})\in\R^{n_2}\times \T^{n-n_2}$
and $|\tilde{x}|:=\max_{1\leq i \leq n_2} |\tilde{x}_i|$.
Define $\chi_{p}:= \chi_{p}(u, U)$ by
\begin{equation}\label{eq:dddd12dd}
\chi_{p}=\left\{\begin{array}{ll}
U, &  |\tilde{x}| \leq p, \\
U+(u-U)(|\tilde{x}|-p), & p < |\tilde{x}| < p+1, \\
u, & |\tilde{x}| \geq p+1 .\\
\end{array}\right.
\end{equation}
Then $\chi_{p}(u,U)\in\MY$ for all large $p$ (independent of $u$).
\end{enumerate}
Define
\begin{equation}\label{eq:2.51}
c(\MY)=\inf _{u \in \mathcal{Y}} J_{1}(u).
\end{equation}
If $c(\MY)\in\R$ and $(u_k)$ is a minimizing sequence for \eqref{eq:2.51}, then there is a $U\in\Gamma_1$ such that
along a subsequence $u_k-v\to U-v$ in $W^{1,2}_{loc}(\R^{n_2}\times \T^{n-n_2})$ as $k\to\infty$.
\end{prop}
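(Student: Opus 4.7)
Standard $W^{1,2}$ estimates yield a weakly convergent subsequence; hypothesis $(Y^1_1)$, combined with a pigeonhole choice of cut-off radius, upgrades this to strong $W^{1,2}_{loc}$ convergence.

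Since $u_k\in\Gamma_1$ satisfies $v\le u_k\le w$, estimate \eqref{eq:dkkldlwwwwdl} gives $\|u_k-v\|_{L^2(\R^{n_2}\times\T^{n-n_2})}\le 1$. Letting $\bd p\to-\infty$ and $\bd q\to\infty$ in the splitting carried out in the proof of Proposition~\ref{prop:2.8}, and using Lemma~\ref{lem:bangert11}, Lemma~\ref{lem:moser}, and $\|F_u\|_{L^\infty(\T^{n+1})}<\infty$, one obtains
\begin{equation*}
J_1(u_k)=\tfrac12\|\nabla(u_k-v)\|_{L^2(\R^{n_2}\times\T^{n-n_2})}^2+I_1(u_k)+I_2(u_k),
\end{equation*}
with $|I_1(u_k)|$ and $|I_2(u_k)|$ uniformly bounded in $k$. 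Since $J_1(u_k)\to c(\MY)\in\R$, the Dirichlet energy is bounded by some $M$ uniformly. Banach--Alaoglu plus Rellich--Kondrachov then produce a subsequence with $u_k-v\rightharpoonup U-v$ weakly in $W^{1,2}$, strongly in $L^2_{loc}$, and pointwise a.e.; passing to the a.e.\ limit in $v\le u_k\le w$ gives $U\in\Gamma_1$.

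To upgrade to strong convergence, fix large $p_0$, write $A_p:=\{p<|\tilde x|<p+1\}\times\T^{n-n_2}$, and by pigeonhole on $\{p_0,\dots,2p_0\}$ select $p_k\in\{p_0,\dots,2p_0\}$ with $\int_{A_{p_k}}|\nabla(u_k-v)|^2\,\ud x\le M/p_0$. Set $\chi_k:=\chi_{p_k}(u_k,U)$; by $(Y^1_1)$, $\chi_k\in\MY$ for $p_0$ large, so $J_1(\chi_k)\ge c(\MY)$. Since $\chi_k-u_k$ is compactly supported and satisfies $|\chi_k-u_k|\le|u_k-U|$, strong $L^2_{loc}$ convergence together with integration by parts of the cross-term against $\Delta v=F_u(x,v)$ reduces $J_1(\chi_k)-J_1(u_k)$, modulo $o_k(1)$, to the gradient difference on $\{|\tilde x|\le p_k\}$ plus an annular term $B_k$ on $A_{p_k}$; Cauchy--Schwarz combined with the pigeonhole bound and the tail bound $\int_{A_{p_k}}|\nabla(U-v)|^2\le\int_{|\tilde x|\ge p_0}|\nabla(U-v)|^2$ give $|B_k|\le O(M/p_0)+o_{p_0}(1)+o_k(1)$. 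Rearranging $J_1(\chi_k)\ge\lim J_1(u_k)$ then subtracting the lower-semicontinuous tail $\liminf_k\int_{\{|\tilde x|\le p_0\}\setminus K}|\nabla(u_k-v)|^2\ge\int_{\{|\tilde x|\le p_0\}\setminus K}|\nabla(U-v)|^2$ for any compact $K\subset\{|\tilde x|\le p_0\}$ yields
\begin{equation*}
\limsup_{k\to\infty}\int_K|\nabla(u_k-v)|^2\,\ud x\le\int_K|\nabla(U-v)|^2\,\ud x+o_{p_0}(1).
\end{equation*}
Letting $p_0\to\infty$ and combining with lower semi-continuity forces $\lim_k\int_K|\nabla(u_k-v)|^2=\int_K|\nabla(U-v)|^2$ for every compact $K$; together with weak convergence in $W^{1,2}_{loc}$, this delivers the desired strong convergence.

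\textbf{Main obstacle.} Controlling the annular term $B_k$ is the delicate point. Weak $W^{1,2}$ convergence provides no smallness of $\int_{A_p}|\nabla u_k|^2$ on a fixed annulus, and naively $B_k$ scales with the annulus volume $p_0^{n_2-1}$, which grows when $n_2\ge 2$. The pigeonhole spreading $p_k$ over an $O(p_0)$-thick shell is essential: it yields an $O(M/p_0)$ bound on the annular Dirichlet mass of $u_k-v$, uniform in $k$, from which Cauchy--Schwarz produces $|B_k|=O(M/p_0)$ and turns $\chi_k$ into a genuine near-minimizing competitor. Everything else is standard lower-semicontinuity bookkeeping and the Hilbert-space fact that weak convergence together with norm convergence implies strong convergence.
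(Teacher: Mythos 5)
Your proposal is correct and follows essentially the scheme of the proof the paper defers to (\cite[Proposition 2.50]{RS}): the uniform bound on $\norm{\nabla(u_k-v)}_{L^2(\R^{n_2}\times\T^{n-n_2})}$ extracted from the decomposition in Proposition \ref{prop:2.8} together with $J_1(u_k)\to c(\MY)$ and \eqref{eq:dkkldlwwwwdl} gives a subsequence with $u_k-v\rightharpoonup U-v$ in $W^{1,2}$, strongly in $L^2_{loc}$ and a.e., so $v\le U\le w$ and $U\in\Gamma_1$, and the $(Y^1_1)$ cut-and-paste competitor $\chi_{p_k}(u_k,U)$ with a pigeonhole-chosen gluing shell is exactly the mechanism that upgrades this to strong $W^{1,2}_{loc}$ convergence. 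The one step you should state explicitly is that the integration by parts of $\int \nabla v\cdot\nabla(\chi_k-u_k)\,\ud x$ must be carried out over the whole set $\{|\tilde{x}|\le p_k+1\}$ where $\chi_k\ne u_k$ (there $\chi_k-u_k$ vanishes on the boundary, so no surface term appears and the bulk term is $o_k(1)$ since $|\chi_k-u_k|\le|u_k-U|\to0$ in $L^1$ of that fixed bounded set, using $\Delta v=F_u(x,v)$); integrating by parts only over the annulus would leave the inner boundary integral over $\{|\tilde{x}|=p_k\}$, which your pigeonhole choice does not control — with that reading your annular estimate and the lower-semicontinuity bookkeeping close the argument as written.
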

\begin{proof}
The proof is similar to that of \cite[Proposition 2.50]{RS} 
so we omit it here.
\end{proof}

\begin{prop}\label{prop:2.64}
Assume $\MY \subset \Gamma_1$ satisfies ($Y_1 ^1$).
Let $c(\MY)$ and $(u_k)$ be defined as in Proposition \ref{prop:2.50}.
If for some $r\in (0,\frac{1}{2})$, some $z\in\R^n$, all smooth $\phi$ with support in $B_{r}(z):=\left\{x \in \mathbb{R}^{n}|| x-z |<r\right\}$ and associated $t_{0}(\phi)>0$,
\begin{equation}\label{eq:2.65}
c(\MY) \leq J_{1}\left(u_{k}+t \phi\right)+\delta_{k}
\end{equation}
for all $|t| \leq t_{0}(\phi)$, where $\delta_{k}=\delta_{k}(\phi) \rightarrow 0$ as $k\to \infty$.
Then the weak limit $U$ of $u_k$ (in the sense of $u_k-v\to U-v$ weakly in $W^{1,2}_{loc}(\R^{n_2}\times \T^{n-n_2})$ as $k\to\infty$) satisfies \eqref{eq:PDE} in $B_{r}(z)$.
\end{prop}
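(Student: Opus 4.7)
The plan is to establish that the limit $U$ solves \eqref{eq:PDE} weakly on $B_r(z)$ by a localized first-variation argument based on hypothesis \eqref{eq:2.65}, and then invoke standard elliptic regularity. First I would localize the change in $J_1$ under a perturbation $t\phi$ with $\phi \in C^\infty_c(B_r(z))$ and $|t| \leq t_0(\phi)$: since $u_k$ and $u_k + t\phi$ coincide outside the compact set $B_r(z)$, Remark \ref{rem:kddfff} ensures $J_1(u_k + t\phi)$ is well-defined even if $u_k + t\phi \not\in \Gamma_1$, and the tails cancel in the definition of $J_1$, giving
\[
J_1(u_k + t\phi) - J_1(u_k) \;=\; \int_{B_r(z)}[L(u_k + t\phi) - L(u_k)]\,\ud x.
\]

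Next I would pass to the limit $k \to \infty$. Since $(u_k)$ is a minimizing sequence, $J_1(u_k) \to c(\MY)$, so \eqref{eq:2.65} rearranges to
\[
\int_{B_r(z)}[L(u_k + t\phi) - L(u_k)]\,\ud x \;\geq\; c(\MY) - J_1(u_k) - \delta_k,
\]
whose right-hand side tends to $0$ as $k\to\infty$, uniformly in $|t| \leq t_0(\phi)$ (as $\delta_k$ is independent of $t$). Expanding
\[
L(u_k + t\phi) - L(u_k) \;=\; t\,\nabla u_k \cdot \nabla\phi \;+\; \tfrac{t^2}{2}|\nabla\phi|^2 \;+\; F(x, u_k + t\phi) - F(x, u_k),
\]
the $W^{1,2}_{loc}$-convergence $u_k - v \to U - v$ supplied by Proposition \ref{prop:2.50} yields $\nabla u_k \to \nabla U$ in $L^2(B_r(z))$ (at least weakly, sufficient for the linear term) and $u_k \to U$ a.e. on $B_r(z)$ along a subsequence. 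Combined with the uniform bound $v \leq u_k \leq w$ and $F \in C^2(\T^{n+1})$, dominated convergence handles the $F$-term, so the identity passes to the limit and gives
\[
\int_{B_r(z)}[L(U + t\phi) - L(U)]\,\ud x \;\geq\; 0 \quad\text{for all } |t| \leq t_0(\phi).
\]

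Finally, the first-variation argument: expanding $L(U + t\phi) - L(U) = t[\nabla U \cdot \nabla\phi + F_u(x,U)\phi] + O(t^2)$, dividing by $t>0$ and letting $t\to 0^+$ yields one inequality, while dividing by $t<0$ and letting $t\to 0^-$ yields the reverse, forcing
\[
\int_{B_r(z)}[\nabla U \cdot \nabla \phi + F_u(x,U)\phi]\,\ud x = 0 \quad\text{for all } \phi \in C^\infty_c(B_r(z)).
\]
Thus $U$ is a weak solution of \eqref{eq:PDE} on $B_r(z)$, hence a classical $C^2$ solution by standard elliptic regularity. The only delicate step I anticipate is passing to the limit in the nonlinear $F$-term; this is where the uniform $L^\infty$ bound $v \leq u_k \leq w$ (ensuring that $F(x,u_k+t\phi) - F(x,u_k)$ is uniformly bounded and hence dominated) and the a.e.\ convergence on $B_r(z)$ must be combined carefully. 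Everything else reduces to the routine observation that the inequality from \eqref{eq:2.65} survives the two limits $k\to\infty$ and $t\to 0$ performed in that order.
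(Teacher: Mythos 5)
Your proposal is correct and follows essentially the same route as the proof the paper defers to (the first-variation argument of \cite[Proposition 2.64]{RS}): localize the difference $J_1(u_k+t\phi)-J_1(u_k)$ to $B_r(z)$ via Remark \ref{rem:kddfff}, pass to the limit in $k$ using weak convergence of gradients for the linear term and a.e.\ convergence plus the bound $v\leq u_k\leq w$ for the $F$-term, and then let $t\to 0^{\pm}$ to kill the first variation, concluding by elliptic regularity. No gaps beyond routine details (e.g.\ finiteness of $J_1(u_k)$ for large $k$, and viewing $\phi$ as a function on $\R^{n_2}\times\T^{n-n_2}$, which is exactly what $r<\tfrac12$ guarantees).
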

\begin{proof}
The proof is similar to that of \cite[Proposition 2.64]{RS} 
so we omit it here.
\end{proof}

Let $\bd{l}:=(\bd{l}_{{n_2}+1},\cdots, \bd{l}_n)\in\N^{n-{n_2}}$
and
\begin{equation*}
\begin{split}
&W^{1,2}_{loc}(\R^{n_2}\times \T_{\bd{l}}^{n-{n_2}})\\
:=&\{u\in W^{1,2}_{loc}(\R^n)\,|\, u(x_1,\cdots, x_{n_2},x_{{n_2}+1}+\bd{l}_{{n_2}+1},\cdots, x_n+\bd{l}_{n})=u(x)\}.
\end{split}
\end{equation*}
Set
\begin{equation}\label{eq:gamma}
  \Gamma_1 ^{\bd{l}}:=\Gamma_1 ^{\bd{l}}(v,w):=\{u\in W^{1,2}_{loc}(\R^{n_2}\times \T_{\bd{l}}^{n-{n_2}})\,|\, v\leq u\leq w \}.
\end{equation}
For
$u\in \Gamma_1 ^{\bd{l}}$,
set
\begin{equation}\label{eq:j1l}
  J_1 ^{\bd{l}}(u):=\int_{\R^{n_2}\times \T_{\bd{l}}^{n-{n_2}}}[L(u)-L(v)]\ud x
\end{equation}
and
\begin{equation*}
  c_1 ^{\bd{l}}:=c_1 ^{\bd{l}}(v,w):=\inf_{u\in\Gamma_1 ^{\bd{l}}} J_1 ^{\bd{l}}(u).
\end{equation*}
By the proof of Proposition \ref{prop:2.8},
$c_1 ^{\bd{l}}$ is well-defined and $-(\prod_{i=n_2+1}^{n}\bd{l}_i)K_1\leq c_1 ^{\bd{l}}\leq 0$.

\begin{rem}\label{rem:lwwwwfkfk}
If we replace $\T^{n-n_2}$ by $\T^{n-n_2}_{\bd{l}}$,
Lemma \ref{lem:3.621}
still holds.
\end{rem}

\begin{rem}\label{rem:lfkfk}
If we replace
$\Gamma_1$, $J_1$ by $\Gamma_1 ^{\bd{l}}$, $J_1 ^{\bd{l}}$, respectively,
and assume $\MY\subset \Gamma_1 ^{\bd{l}}$ and
$c^{\bd{l}}(\MY):=\inf_{u\in\MY} J_1 ^{\bd{l}}(u)$,
then
Propositions \ref{prop:2.50} and \ref{prop:2.64} still hold.
\end{rem}
\begin{rem}\label{rem:kdkjv}
Proceeding as in the proof of
Proposition \ref{prop:dd} with obvious modifications,
$c_{1}^{\bd{l}}=0$.
\end{rem}

\begin{prop}\label{prop:2.2}
Let
\begin{equation*}
  \MM_1 ^{\bd{l}}:=\{u\in \Gamma_1 ^{\bd{l}}\,|\, J_1 ^{\bd{l}}(u)=c_1 ^{\bd{l}}=0\}.
\end{equation*}
Then $\MM_1 ^{\bd{l}}\neq \emptyset$ and $\MM_1 ^{\bd{l}}=\MM_1 $.
\end{prop}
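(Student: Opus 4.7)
The plan is to prove the two inclusions separately. The forward inclusion $\MM_{1}\subset \MM_{1}^{\bd{l}}$ (which will immediately give $\MM_{1}^{\bd{l}}\neq\emptyset$) reduces to factoring the integral, whereas the reverse inclusion $\MM_{1}^{\bd{l}}\subset \MM_{1}$ requires upgrading $\bd{l}$-periodicity to full $1$-periodicity via a lattice-plus-maximum-principle argument.

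For the forward direction, take $u\in\MM_{1}$. Since $u\in\Gamma_{1}$, it is $1$-periodic in $x_{n_{2}+1},\ldots,x_{n}$, hence a fortiori $\bd{l}$-periodic, so $u\in\Gamma_{1}^{\bd{l}}$. Because $u$, $v$ and $F$ are all $1$-periodic in these variables, the integrand $L(u)-L(v)$ is as well, so factoring the integral over a fundamental domain of $\T^{n-n_{2}}_{\bd{l}}$ yields
\[
J_{1}^{\bd{l}}(u)=\Bigl(\prod_{i=n_{2}+1}^{n}\bd{l}_{i}\Bigr)\,J_{1}(u)=0=c_{1}^{\bd{l}},
\]
the last equality by Remark \ref{rem:kdkjv}. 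Hence $u\in\MM_{1}^{\bd{l}}$; in particular, since trivially $v\in\MM_{1}$, we obtain $v\in\MM_{1}^{\bd{l}}$ and thus $\MM_{1}^{\bd{l}}\neq\emptyset$.

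For the reverse direction, fix $u\in\MM_{1}^{\bd{l}}$. The objective is to show $u$ is $1$-periodic in $x_{n_{2}+1},\ldots,x_{n}$, since then $u\in\Gamma_{1}$ and the same factorization gives $J_{1}(u)=J_{1}^{\bd{l}}(u)/\prod\bd{l}_{i}=0$, whence $u\in\MM_{1}$. First, applying Proposition \ref{prop:2.64} adapted as in Remark \ref{rem:lfkfk} to the constant minimizing sequence $u_{k}\equiv u$ shows that $u$ is a classical solution of \eqref{eq:PDE} on $\R^{n}$. Next, fix $i\in\{n_{2}+1,\ldots,n\}$ and set $\tilde{u}:=\MT_{\bar{\bd{e}}^{i}}u$. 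In the simplest case, the rational independence of $\alpha_{1},\ldots,\alpha_{n_{2}}$ gives $\bar{\bd{e}}^{i}\in\bar{\Lambda}_{2}$, so $\MT_{\bar{\bd{e}}^{i}}v=v$, $\MT_{\bar{\bd{e}}^{i}}w=w$, and hence $\tilde{u}\in\Gamma_{1}^{\bd{l}}$ with $J_{1}^{\bd{l}}(\tilde{u})=0$. Define $u_{-}:=\min(u,\tilde{u})$ and $u_{+}:=\max(u,\tilde{u})$; both lie in $\Gamma_{1}^{\bd{l}}$, and the pointwise lattice identity $L(u_{-})+L(u_{+})=L(u)+L(\tilde{u})$, together with $J_{1}^{\bd{l}}(u_{\pm})\geq c_{1}^{\bd{l}}=0$, forces $J_{1}^{\bd{l}}(u_{\pm})=0$, so $u_{\pm}$ are minimizers and therefore classical solutions of \eqref{eq:PDE}.

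Applying Lemma \ref{lem:rab} to the pairs $(u_{-},u)$ and $(u,u_{+})$ then yields the trichotomy $\tilde{u}>u$ everywhere, $\tilde{u}\equiv u$, or $\tilde{u}<u$ everywhere. If $\tilde{u}>u$ pointwise, iterating gives $\MT_{m\bar{\bd{e}}^{i}}u>u$ for every $m\in\N$, but choosing $m=\bd{l}_{i}$ and invoking the $\bd{l}$-periodicity of $u$ gives $\MT_{\bd{l}_{i}\bar{\bd{e}}^{i}}u=u$, a contradiction; the case $\tilde{u}<u$ is symmetric. Therefore $\tilde{u}\equiv u$, and since $i$ was arbitrary, $u$ is $1$-periodic in $x_{n_{2}+1},\ldots,x_{n}$. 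The main obstacle is the step showing that a minimizer of $J_{1}^{\bd{l}}$ on the constrained set $\Gamma_{1}^{\bd{l}}$ is a genuine classical solution of \eqref{eq:PDE} even at points where $u$ touches the obstacles $v$ or $w$, since there the admissible perturbations required by Proposition \ref{prop:2.64} are restricted; once this is in hand, the min/max lattice argument and the iteration-versus-periodicity contradiction are elementary.
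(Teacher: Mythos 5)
Your proof follows the same route as the paper (regularity via Proposition~\ref{prop:2.64}, orderedness via the lattice identity and Lemma~\ref{lem:rab}, periodicity upgrade via the iteration-versus-periodicity contradiction), but the crucial first step is left as a gap. You invoke Proposition~\ref{prop:2.64} with $u_k\equiv u$ without verifying its hypothesis~\eqref{eq:2.65}, and then explicitly flag the obstacle-contact difficulty at the end without resolving it. This is precisely the nontrivial technical content of the paper's argument: without it you cannot conclude $u$ is a classical solution, and the trichotomy from Lemma~\ref{lem:rab} and the subsequent iteration argument never get off the ground.

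The fix uses the same lattice additivity you already deploy for the minimality of $u_\pm$, applied as a two-sided truncation. For $|t|\le1$ and $\phi$ supported in a ball of radius $r<1/2$, using $J_1^{\bd{l}}(v)=J_1^{\bd{l}}(w)=0$ and the pointwise identity $L(\max(a,b))+L(\min(a,b))=L(a)+L(b)$,
\begin{equation*}
J_1^{\bd{l}}(u+t\phi)=J_1^{\bd{l}}(\max(u+t\phi,w))+J_1^{\bd{l}}\bigl(\max(\min(u+t\phi,w),v)\bigr)+J_1^{\bd{l}}\bigl(\min(\min(u+t\phi,w),v)\bigr).
\end{equation*}
The first and third terms are compactly supported modifications of the minimal functions $w$ and $v$ respectively, so Lemma~\ref{lem:3.621} (via Remark~\ref{rem:lwwwwfkfk}) makes them nonnegative; the middle term lies in $\Gamma_1^{\bd{l}}$, hence is at least $c_1^{\bd{l}}$. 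Thus $J_1^{\bd{l}}(u+t\phi)\ge c_1^{\bd{l}}$, which is \eqref{eq:2.65} with $\delta_k\equiv0$, and Proposition~\ref{prop:2.64} applies. Once that is inserted, the remainder of your argument (the $u_\pm$ min/max, the strong-maximum-principle trichotomy, and the contradiction from $\MT_{\bd{l}_i\bar{\bd{e}}^i}u=u$) matches the paper and is correct.
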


\begin{proof}
Since $v,w\in \MM_1 ^{\bd{l}}$, $\MM_1 ^{\bd{l}}\neq \emptyset$.
To show $\MM_1 ^{\bd{l}}=\MM_1 $,
following \cite[Proposition 2.2]{RS}, we need to prove
\begin{enumerate}
  \item \label{prop:2.2-1} any $u\in\MM_1 ^{\bd{l}}$ is a classical solution of \eqref{eq:PDE};
  \item \label{prop:2.2-2} $\MM_1 ^{\bd{l}}$ is an ordered set;
  \item \label{prop:2.2-3} any $u\in\MM_1 ^{\bd{l}}$ is contained in $\MM_1$.
\end{enumerate}
Firstly \eqref{prop:2.2-1} is ensured by Proposition \ref{prop:2.64}.
Indeed, assume that there is a minimizing sequence $u_k\in\Gamma_1 ^{\bd{l}}$
such that $u_k-v\to u-v$ weakly in $W^{1,2}_{loc}(\R^{n_2}\times \T_{\bd{l}}^{n-n_2})$ as $k\to\infty$.
Let $r=\frac{1}{4}$. For any $z\in\R^n$, define
$\phi$ and $B_r(z)$ as in Proposition \ref{prop:2.64}.
We shall prove \eqref{eq:2.65} holds for $t_0(\phi)=1$ (with $J_1$ replaced by $J_1 ^{\bd{l}}$).
Notice that for $|t|\leq 1$,
\begin{equation*}
\begin{split}
  &J_1 ^{\bd{l}}\left(\max\left(\min(u_k+t\phi, w),v\right)\right)+
  J_1 ^{\bd{l}}\left(\min\left(\min(u_k+t\phi, w),v\right)\right)\\
  =&J_1 ^{\bd{l}}(\min(u_k+t\phi, w))+J_1 ^{\bd{l}}(v)\\
  =&J_1 ^{\bd{l}}(\min(u_k+t\phi, w)),
  \end{split}
\end{equation*}
and
\begin{equation*}
\begin{split}
&J_1 ^{\bd{l}}(\max(u_k+t\phi, w))+J_1 ^{\bd{l}}(\min(u_k+t\phi, w))\\
=&J_1 ^{\bd{l}}(u_k+t\phi)+J_1 ^{\bd{l}}(w)\\
=&J_1 ^{\bd{l}}(u_k+t\phi).
 \end{split}
 \end{equation*}
By Lemma \ref{lem:3.621} and Remark \ref{rem:lwwwwfkfk} and $r=\frac{1}{4}<1$,
\begin{equation*}
  J_1 ^{\bd{l}}\left(\min\left(\min(u_k+t\phi, w),v\right)\right) \geq 0,
   \quad J_1 ^{\bd{l}}(\max(u_k+t\phi, w)) \geq 0.
\end{equation*}
Thus we have
\begin{equation}\label{eq:1234dd}
J_1 ^{\bd{l}}(u_k+t\phi)\geq J_1 ^{\bd{l}}\left( \max\left(\min(u_k+t\phi, w),v\right) \right).
\end{equation}
Since $\max\left(\min(u_k+t\phi, w),v\right)\in \Gamma_1 ^{\bd{l}}$,
$J_1 ^{\bd{l}}(u_k+t\phi)\geq c^{\bd{l}}_{1}$.
So by Proposition \ref{prop:2.64} and Remark \ref{rem:lfkfk},
$u$ is a classical solution of \eqref{eq:PDE}.
For any $\tilde{u}\in\MM_1 ^{\bd{l}}$,
taking the minimizing sequence $u_k$ by $u_k\equiv\tilde{u}$,
the above argument shows that $\tilde{u}$ is a classical solution of \eqref{eq:PDE}.

The proofs of \eqref{prop:2.2-2} and \eqref{prop:2.2-3} are similar to 
that of \cite[Proposition 2.2]{RS}, so we omit it here.
\end{proof}

\noindent
\emph{Continue the proof of Proposition \ref{prop:ped}.}\label{page}
It suffices to show that $u$ is minimal.
Proceeding as in the proof of \cite[Theorem 3.60]{RS} shows $u$ is minimal.
Indeed,
suppose by contradiction
$u$ is not minimal.
By the definition of minimal,
there is a bounded domain $\Omega\subset \R^n$ with smooth boundary
and a smooth function $\phi$ with support contained in $\Omega$
such that
\begin{equation}\label{eq:dkkgf}
\int_{\Omega}[L(u)-L(u+\phi)]\ud x >0
\end{equation}
Without loss of generality, assume
$\Omega\subset [\bd{p},\bd{q}]\times [0, \bd{l}_{n_2+1}]\times\cdots\times [0,\bd{l}_n]$ for some $\bd{p}<\bd{q}\in\Z^{n_2}$ and $\bd{l}=(\bd{l}_{n_2+1},\cdots,\bd{l}_n)\in\N^{n-n_2}$.
By the periodicity of $u$ in $x_{n_2+1},\cdots,x_n$,
$$J_{1}^{\bd{l}}(u)=(\prod_{i={n_2+1}}^{n}\bd{l}_i)J_1(u)=0=c_{1}^{\bd{l}}.$$
By Remark \ref{rem:kddfff},
$J_{1}^{\bd{l}}(u+\phi)$ is well-defined and
$$J_{1}^{\bd{l}}(u+\phi)=\int_{\R^{n_2}\times \T_{\bd{l}}^{n-n_2}}[L(u+\phi)-L(v)]\ud x.$$
So \eqref{eq:dkkgf} implies $J_{1}^{\bd{l}}(u+\phi)< J_{1}^{\bd{l}}(u)=c_{1}^{\bd{l}}$.
Thus the argument of \eqref{eq:1234dd} shows
$$c_{1}^{\bd{l}}>J_{1}^{\bd{l}}(u+\phi)\geq J_1 ^{\bd{l}}\left( \max\left(\min(u+\phi, w),v\right) \right),
$$
contrary to $\max\left(\min(u+\phi, w),v\right)\in \Gamma_1 ^{\bd{l}}$ and the definition of $c_{1}^{\bd{l}}$.
\qed

\begin{rem}
By Propositions \ref{prop:ped} and \ref{prop:2.2}, $\MM_1 ^{\bd{l}}=\MM_1 =\MMM(\bar{a}^1)\cap \Gamma_1$
and $\MM_1$ is ordered.
\end{rem}

\begin{thm}\label{thm:ldfjldf}
Denote by $\langle v,w\rangle$ an adjacent pair of $\MMM^{rec}(\bar{a}^1)$ satisfying $v<w$ and
let $\mathcal{A}_1$ to be the set which consists of such pairs.
Then
$$\MMM(\bar{a}^1)=\MMM^{rec}(\bar{a}^1) \cup \left[ \mathop{\cup}\limits_{\langle v,w\rangle\in\mathcal{A}_1}\MM_1 (v,w)\right]$$
and so $\MMM(\bar{a}^1)$ is ordered.
\end{thm}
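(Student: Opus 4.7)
The plan is to prove the set equality by two inclusions and then deduce that $\MMM(\bar{a}^1)$ is ordered by a short case analysis. The inclusion $\supseteq$ is immediate from the definitions: $\MMM^{rec}(\bar{a}^1)\subset\MMM(\bar{a}^1)$ by \eqref{eq:133434134}, and for every $\langle v,w\rangle\in\mathcal{A}_1$ Proposition \ref{prop:ped} gives $\MM_1(v,w)=\MMM(\bar{a}^1)\cap\Gamma_1(v,w)\subset\MMM(\bar{a}^1)$.

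For the nontrivial inclusion $\subseteq$, I fix $u\in\MMM(\bar{a}^1)\setminus\MMM^{rec}(\bar{a}^1)$ and construct an adjacent pair $\langle v_0,w_0\rangle\in\mathcal{A}_1$ with $u\in\Gamma_1(v_0,w_0)$. Since we are in case (b) of Lemma \ref{lem:659712}, the set $C:=\{\tilde{u}(\bd{0})\mid \tilde{u}\in\MMM^{rec}(\bar{a}^1)\}$ is a Cantor set, and the evaluation $\tilde{u}\mapsto\tilde{u}(\bd{0})$ is an order-preserving bijection onto $C$: order-preservation comes from the total order of Lemma \ref{lem:659712}, while injectivity follows because two ordered classical solutions of \eqref{eq:PDE} agreeing at a single point must coincide by Lemma \ref{lem:rab}. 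By Lemma \ref{lem:dlffdf}, the set $\MMM^{rec}(\bar{a}^1)\cup\{u\}$ is totally ordered, and the same maximum-principle argument rules out $u(\bd{0})\in C$ (otherwise $u$ would equal the $\MMM^{rec}$-element sharing its value at $\bd{0}$, contradicting $u\notin\MMM^{rec}(\bar{a}^1)$). Hence $u(\bd{0})$ lies in a complementary interval $(a,b)$ of the closed set $C$ whose endpoints $a,b\in C$ correspond to unique elements $v_0,w_0\in\MMM^{rec}(\bar{a}^1)$. The total order of $\MMM^{rec}(\bar{a}^1)\cup\{u\}$ forces $v_0\leq u\leq w_0$ pointwise, while the gap property $(a,b)\cap C=\emptyset$ translates into adjacency of $\langle v_0,w_0\rangle$ in $\MMM^{rec}(\bar{a}^1)$. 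Thus $u\in\MMM(\bar{a}^1)\cap\Gamma_1(v_0,w_0)=\MM_1(v_0,w_0)$ by Proposition \ref{prop:ped}.

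To see that $\MMM(\bar{a}^1)$ is ordered, I take arbitrary $u_1,u_2\in\MMM(\bar{a}^1)$ and argue according to the decomposition just established. If both belong to $\MMM^{rec}(\bar{a}^1)$, Lemma \ref{lem:659712} applies directly. If only $u_2$ lies in some $\MM_1(v,w)$, then Lemma \ref{lem:dlffdf} ensures that $\MMM^{rec}(\bar{a}^1)\cup\{u_2\}$ is ordered, so $u_1$ and $u_2$ are comparable. If $u_1,u_2$ share the same $\MM_1(v,w)$-strip, the remark following Proposition \ref{prop:2.2} supplies the ordering. Finally, if they live in distinct strips $\MM_1(v,w)$ and $\MM_1(v',w')$, the total order of $\MMM^{rec}(\bar{a}^1)$ forces, say, $w\leq v'$, whence $u_1\leq w\leq v'\leq u_2$.

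The step requiring the most care is the bijection between $\MMM^{rec}(\bar{a}^1)$ and the Cantor set $C$ together with the identification of adjacent pairs in $\MMM^{rec}(\bar{a}^1)$ with complementary intervals of $C$; once this structural picture is set up, the key fact that each non-recurrent $u$ is trapped strictly between two adjacent recurrent elements is a direct consequence of Lemmas \ref{lem:dlffdf} and \ref{lem:rab}, and the rest reduces to a routine application of Proposition \ref{prop:ped} and the orderedness of $\MM_1$.
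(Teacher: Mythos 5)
Your proposal is correct and follows essentially the same route as the paper: the paper's (largely implicit) argument is exactly the combination of Lemma \ref{lem:dlffdf} (orderedness of $\MMM^{rec}(\bar{a}^1)\cup\{u\}$), the Cantor-set/gap structure of Lemma \ref{lem:659712} placing any non-recurrent $u$ between an adjacent pair, and Proposition \ref{prop:ped} identifying $\MM_1(v,w)=\MMM(\bar{a}^1)\cap\Gamma_1(v,w)$, with the ordering of $\MMM(\bar{a}^1)$ then following from the case analysis you give. Your explicit evaluation-map bookkeeping at $\bd{0}$ (injectivity via Lemma \ref{lem:rab}) merely fills in details the paper states without proof, so there is nothing substantive to add.
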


To conclude this subsection, let us point out how to deal with the general case.
Firstly set $\bd{s}=(\bd{s}_{n_2+1}, \cdots, \bd{s}_{n})\in \Z^{n-n_2}$ 
and $\bd{r}=(\bd{r}_{n_2+1}, \cdots, \bd{r}_{n})\in \N^{n-n_2}$.
If $$\bar{a}^1=\frac{(-\alpha_1,\cdots, -\alpha_{n},1)}{\|(-\alpha_1,\cdots, -\alpha_{n},1)\|}$$
with $\alpha_i\not\in\Q$ for $1\leq i\leq n_2$ and $\alpha_1, \cdots, \alpha_{n_2}$ are rationally independent,
and $\alpha_j=\frac{\bd{s}_j}{\bd{r}_j}\in\Q$ for $n_2+1\leq j\leq n$ and $\bd{s}_j, \bd{r}_j$ are relatively prime.
Then following \cite[Section 5.3]{RS},
define
$\Gamma_1:=\Gamma_{1}^{\bd{r}}+\sum_{k=n_2+1}^{n}\alpha_k \cdot x_k$ and
\begin{equation}\label{eq:5.26}
  c_{1}^{\bd{r}, \bd{s}}:=\inf _{u \in \Gamma_{1}^{\bd{r}}} J_{1}^{\bd{r}}(u+ \sum_{k=n_2+1}^{n}\alpha_k \cdot x_k),
\end{equation}
where $\Gamma_{1}^{\bd{r}}$ and $J_{1}^{\bd{r}}$ are defined as in \eqref{eq:gamma} and \eqref{eq:j1l}, respectively.
Set
\begin{equation*}
\mathcal{M}_{1}^{\bd{r}, \bd{s}}=
\left\{u+\sum_{k=n_2+1}^{n}\alpha_k \cdot x_k 
\Big| 
u \in \Gamma_{1}^{\bd{r}} \text { and } J_{1}^{\bd{r}}\left(u+\sum_{k=n_2+1}^{n}\alpha_k \cdot x_k\right)=c_{1}^{\bd{r}, \bd{s}}\right\}.
\end{equation*}
Proceeding as in Section \ref{sec:3.2}, 
we obtain the variational construction of elements of $\MMM(\bar{a}^1)\cap \Gamma_1$.
The reader is referred to \cite[Section 5.3]{RS} for more details related to this case.

Next for the general case, we shall reduce it to the above case by changing coordinate systems.
Since $rank(\bar{\Lambda}_2)=n-n_2\in [1,n-1]$,
$\dim(V_2)=rank(\Lambda_2)=rank(\bar{\Lambda}_2)=n-n_2$.
Recall that $\bar{\Lambda}_2=\Z^{n+1}\cap \langle \bar{a}^1\rangle ^{\perp}$ and
$V_2=span (\Lambda_2)=\pi(span (\bar{\Lambda}_2))$.
Consider the orthogonal decomposition of $\R^n=V_2 ^{\perp}\times V_2 $.

In the subspace $V_2$, choose $n-n_2$ orthogonal vectors $\omega^j\in\Z^n$, $n_2 +1\leq j\leq n$.
Thus $\omega^j=\sum_{k=1}^{n}\alpha_{kj}\bd{e}^k$ with $\alpha_{kj}\in\Z$, $n_2+1\leq j\leq n$, $1\leq k\leq n$.
For fixed $j$, 
it can be assumed that the components $\alpha_{kj}$ of $\omega^j$
have no common factor.
Now suppose $(\alpha_1,\cdots,\alpha_n)-\sum_{j=n_2+1}^{n}c_j\omega^j=:(\beta_1,\cdots,\beta_{n})\in V_2 ^{\perp}\setminus\{\bd{0}\}$ 
with $c_j\in \Q$, $n_2+1\leq j\leq n$.

In the subspace $V_2 ^{\perp}$,
fix any orthogonal vectors $\omega^j\in\Z^n$, $1 \leq j\leq n_2$.
Then there are $c_j\in \R\setminus \Q$, $1 \leq j\leq n_2$, such that
$(\beta_1,\cdots,\beta_{n})=\sum_{j=1}^{n_2}c_j \omega^j$.
We claim that
\begin{equation}\label{eq:claim:kddk}
\textrm{$c_j$, $1 \leq j\leq n_2$,
are rationally independent.}
\end{equation}
Indeed, suppose by contradiction that there is some $c_j$ (without loss of generality, assume $j=1$ and $n_2\geq 2$) such that
\begin{equation}\label{eq:dddddd}
c_1=\sum_{i=2}^{n_2}d_{1i}c_i
\end{equation}
with $d_{1i}\in\Q$ and not all $d_{1i}=0$.
First suppose
\begin{equation}\label{eq:claimllll}
  \textrm{$d_{1i}\neq 0$ for all $i=2,\cdots,n_2$.}
\end{equation}
Thus
\begin{equation*}
\begin{split}
  (\beta_1,\cdots,\beta_{n}) &=\sum_{j=1}^{n_2}c_j \omega^j \\
    &=c_1\omega^1+\sum_{i=2}^{n_2}c_i d_{1i} \frac{1}{d_{1i}}\omega^i\\
    &=c_1\omega^1+\left(c_2 d_{12}
    +\sum_{i=3}^{n_2}c_i d_{1i}\right) \frac{1}{d_{12}}\omega^2
    -\sum_{i=3}^{n_2}c_i d_{1i}\frac{1}{d_{12}}\omega^2
    +\sum_{i=3}^{n_2}c_i d_{1i} \frac{1}{d_{1i}}\omega^i\\
    &=c_1\left(\omega^1+\frac{1}{d_{12}}\omega^2\right)
    +\sum_{i=3}^{n_2}c_i d_{1i}\left[ \frac{1}{d_{1i}}\omega^i-\frac{1}{d_{12}}\omega^2\right].
\end{split}
\end{equation*}
But
\begin{equation}\label{eq:basis}
\omega^1+\frac{1}{d_{12}}\omega^2, \frac{1}{d_{13}}\omega^3-\frac{1}{d_{12}}\omega^2, \cdots, \frac{1}{d_{1n_2}}\omega^{n_2}-\frac{1}{d_{12}}\omega^2
\end{equation}
are linearly independent,
contrary to $\dim(V_2 ^{\perp})=n_2$.

If \eqref{eq:claimllll} is not hold, then there is/are $i$ such that $d_{1i}=0$.
Without loss of generality,
assume $c_1=\sum_{i=2}^{\ell}d_{1i}c_i$ with $2\leq \ell \leq n_2-1$.
Then
\begin{equation*}
\begin{split}
  (\beta_1,\cdots,\beta_{n+1}) &=\sum_{j=1}^{\ell}c_j \omega^j +\sum_{j=\ell+1}^{n_2}c_j \omega^j\\
    &=c_1\left(\omega^1+\frac{1}{d_{12}}\omega^2\right)
    +\sum_{i=3}^{\ell}c_i d_{1i}\left( \frac{1}{d_{1i}}\omega^i-\frac{1}{d_{12}}\omega^2\right)+\sum_{j=\ell+1}^{n_2}c_j \omega^j.
\end{split}
\end{equation*}
But
\begin{equation*}
\omega^1+\frac{1}{d_{12}}\omega^2, \frac{1}{d_{13}}\omega^3-\frac{1}{d_{12}}\omega^2, \cdots, \frac{1}{d_{1\ell}}\omega^{\ell}-\frac{1}{d_{12}}\omega^2, \omega^{\ell+1}, \cdots, \omega^{n_2}
\end{equation*}
are linearly independent,
again contrary to $\dim(V_2 ^{\perp})=n_2$.
Hence \eqref{eq:claim:kddk} is proved.

Now as in the subspace $V_2$, we set
$\omega^j=\sum_{k=1}^{n}\alpha_{kj}\bd{e}^k$ with $\alpha_{kj}\in\Z$, $1\leq j\leq n_2$, $1\leq k\leq n$,
and $\alpha_{kj}\in\Z$ have no common factor.
So $\{\omega^j \,|\, 1\leq j\leq n\}$ is an orthogonal basis of $\R^n$.
Let $B:=(\alpha_{ij})_{i,j=1}^{n}$
and $\lambda_k :=\sum_{j=1}^{n}\alpha_{jk}^2$ ($=\norm{\omega^k}^2$) for $1\leq k\leq n$.
Then
\begin{equation*}
  (\omega^1,\cdots, \omega^n)=(\bd{e}^1,\cdots,\bd{e}^n)B.
\end{equation*}
Assume $\xi=x_1 \bd{e}^1+\cdots x_n \bd{e}^n=y_1\omega^1+\cdots +y_n \omega^n$,
then
\begin{equation}\label{eq:relation}
\left\{
  \begin{array}{l}
    (x_1,\cdots, x_n)=(y_1,\cdots, y_n)B^{T}, \\
    (y_1,\cdots, y_n)=(x_1,\cdots, x_n)(B^{T})^{-1}.
  \end{array}
\right.
\end{equation}
Here $B^{T}$ is the transpose of $B$
and $(B^{T})^{-1}$ is the inverse matrix of $B^{T}$.
By the definition of $B$ and the fact that $\{\omega^i\}_{i=1}^{n}$ is an orthogonal basis,
$B^{T}B=diag(\lambda_1 , \cdots, \lambda_n )$,
where `diag' means a diagonal matrix.
So
\begin{equation}\label{eq:aboutB}
  (B^{T})^{-1}=B\cdot diag(\lambda_1 ^{-1} , \cdots, \lambda_n ^{-1}).
\end{equation}

If $u^*=u^*(x)$ satisfies
\eqref{eq:PDE}, then $u=u(y):=u^* (y B^{T})$ satisfies
\begin{equation}\label{eq:dfdfdv001}
  -\Big(\frac{1}{\lambda_1}  \frac{\partial  ^2 u}{\partial y_1 ^2}
  + \cdots +\frac{1}{\lambda_n}  \frac{\partial  ^2 u}{\partial y_n ^2}\Big)
  +\bar{F}_u(y, u)=0,
\end{equation}
with $\bar{F}_u(y,u):=F_u(y B^{T}, u)$.
Since $F$ satisfies \eqref{eq:F1-F2} and $\alpha_{ij}\in\Z$,
$\bar{F}$ also satisfies \eqref{eq:F1-F2}.
Although there are some additional constants $\lambda_k$ in \eqref{eq:dfdfdv001},
they do not affect our analysis.
By the construction of $B$,
if $u^*=u^*(x)$ satisfies \eqref{eq:PDE} with rotation vector $\alpha$,
then
$$
M\geq |u^*(x)-\alpha \cdot x|= |u^*(yB^{T})-\alpha \cdot yB^{T}|=|u(y)-\alpha \cdot y B^{T}|=|u(y)-\alpha B \cdot y |,
$$
which implies that the  rotation vector of $u$ is $\alpha B$ 
in the coordinate system $\omega^1,\cdots, \omega^n$.
For the rotation vector $\alpha = (\alpha_1,\cdots, \alpha_n)$,
we have
$\alpha_1 \bd{e}^1+\cdots \alpha_n \bd{e}^n= c_1 \omega^1 +\cdots +c_n \omega^n$.
Thus by \eqref{eq:relation} and \eqref{eq:aboutB},
$$(c_1,\cdots,c_n)=(\alpha_1,\cdots,\alpha_n)(B^T)^{-1}=(\alpha_1,\cdots,\alpha_n)B\cdot diag(\lambda_1 ^{-1} , \cdots, \lambda_n ^{-1}).$$
Hence the rotation vector of $u$ is $\alpha B=(c_1,\cdots,c_n)\cdot diag(\lambda_1  , \cdots, \lambda_n ).$
Notice that on the one hand $\lambda_k\in\N$ for all $1\leq k\leq n$,
and on the other hand
$c_i\not\in\Q$ for $1\leq i\leq n_2$ and $c_1, \cdots, c_{n_2}$ are rationally independent,
and $c_j\in\Q$ for $n_2+1\leq j\leq n$.
So we have reduced the general case to the one we already
know how to deal with.
After analyzing in the new coordinate system $\omega^1,\cdots, \omega^n$,
using $u^*(x)=u(y)$
we can return back to the coordinate system $\bd{e}^1,\cdots,\bd{e}^n$.


\subsection{The case $rank(\bar{\Lambda}_2)=0$}\label{sec:3.3}
\

In this case,
$\bar{a}^1$ is rationally independent.
Although in this case there is no basic heteroclinic solutions in higher dimensions,
it is interesting to give the variational construction of adjacent pairs in $\MMM^{rec}(\bar{a}^1)$.
Since given gap conditions,
more complicated solutions can be established by variational methods,
for instance, mountain pass solutions.

Note that in this case,
$n=n_2$.
Replace $\R^{n_2}\times \T^{n-n_2}$ by $\R^n$.
Lemma \ref{lem:3.621} is equivalent to the definition of minimal.
Define $\MMM^{rec}(\bar{a}^1)$ as in \eqref{eq:133434134}.
Lemma \ref{lem:bangert11}
should be replaced by the following lemma.
\begin{lem}[{cf. \cite[Lemma 4.5]{bangert-uniq}}]\label{lem:moser2}
Suppose $\bar{a}^1$ is rationally independent and $v<w$ are adjacent in $\MMM^{rec}(\bar{a}^1)$.
Then
\begin{equation*}
  \int_{\R^n}(w-v)\ud x\leq 1
\end{equation*}
and
\begin{equation*}
  \int_{\R^n}(w-v)^2\ud x\leq 1.
\end{equation*}
\end{lem}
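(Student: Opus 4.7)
The plan is to adapt the standard packing argument: show that the open region between $v$ and $w$ in $\R^{n+1}$ has pairwise disjoint $\Z^{n+1}$-translates, so that it injects into the unit-volume torus $\T^{n+1}$, and then recover the pointwise bound $w - v \leq 1$ from admissibility to reduce the $L^2$ estimate to the $L^1$ one.

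First, the pointwise bound. Taking $\bar{\bd{k}} = \bar{\bd{e}}^{n+1}$, admissibility \eqref{eq:lkdkl22d} gives $\bar{\bd{e}}^{n+1} \cdot \bar{a}^1 > 0$, so (by the key claim below) $\MT_{\bar{\bd{e}}^{n+1}} v = v + 1 \geq w$, i.e.\ $w - v \leq 1$ pointwise. Thus $\int (w-v)^2 \,\ud x \leq \int (w - v)\, \ud x$, and it suffices to prove the $L^1$ estimate.

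The key claim is: for every $\bar{\bd{m}} \in \Z^{n+1}$ with $\bar{\bd{m}} \cdot \bar{a}^1 > 0$, $\MT_{\bar{\bd{m}}} v \geq w$. Indeed $\MT_{\bar{\bd{m}}} v \in \MMM^{rec}(\bar{a}^1)$ (the recurrent set is $\Z^{n+1}$-invariant), Lemma \ref{prop:dk68956} yields $\MT_{\bar{\bd{m}}} v > v$, and $\MMM^{rec}(\bar{a}^1)$ is totally ordered by Lemma \ref{lem:659712}; so the adjacency of $v$ and $w$ forces $\MT_{\bar{\bd{m}}} v \geq w$. From here, set $(v, w) := \{(x, x_{n+1}) \in \R^{n+1} : v(x) < x_{n+1} < w(x)\}$. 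I claim its $\Z^{n+1}$-translates are pairwise disjoint. For $\bar{\bd{m}} \in \Z^{n+1} \setminus \{0\}$, rational independence of $\bar{a}^1$ gives $\bar{\bd{m}} \cdot \bar{a}^1 \neq 0$; up to replacing $\bar{\bd{m}}$ by $-\bar{\bd{m}}$ we may assume $\bar{\bd{m}} \cdot \bar{a}^1 > 0$, whence $\MT_{\bar{\bd{m}}} v \geq w$, and a short pointwise check shows $(v, w) \cap [(v, w) + \bar{\bd{m}}] = \emptyset$. Consequently $\pi : \R^{n+1} \to \T^{n+1}$ is injective on $(v, w)$ up to a null set, and Fubini yields
\[
  \int_{\R^n}(w - v)\,\ud x \;=\; \bigl|(v, w)\bigr| \;=\; \bigl|\pi((v, w))\bigr| \;\leq\; |\T^{n+1}| \;=\; 1.
\]

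The main obstacle, as I see it, is the key claim above: this is where rational independence of $\bar{a}^1$, adjacency in $\MMM^{rec}(\bar{a}^1)$, and the total ordering of $\MMM^{rec}(\bar{a}^1)$ must be combined. Once this is in hand the packing/Fubini step is routine, and the $L^2$ bound is an immediate corollary of the pointwise bound and the $L^1$ bound.
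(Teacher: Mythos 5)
Your proof is correct and follows the standard packing argument that Bangert's cited Lemma 4.5 uses (the paper itself only cites this result without reproducing a proof). The key claim — that $\bar{\bd{m}}\cdot\bar{a}^1>0$ forces $\MT_{\bar{\bd{m}}}v\geq w$ via $\Z^{n+1}$-invariance of $\MMM^{rec}(\bar{a}^1)$, Lemma~\ref{prop:dk68956}, total ordering, and adjacency — is exactly right, and the rational independence of $\bar{a}^1$ is used in precisely the needed place, to rule out $\bar{\bd{m}}\cdot\bar{a}^1=0$ and hence guarantee pairwise disjointness of the $\Z^{n+1}$-translates of the region $(v,w)$; the injection into $\T^{n+1}$ then gives the $L^1$ bound, and the pointwise bound $w-v\leq 1$ reduces the $L^2$ bound to the $L^1$ one.
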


Now
proceeding as in Section \ref{sec:3.2},
we can obtain the same conclusions.
For example, the proof of Proposition \ref{prop:kdk} is easier,
since $\bar{\Lambda}_2=\emptyset$.
To prove a property similar to
Proposition \ref{prop:ped},
one should show that $u\in\MM_1$ is minimal.
This is easier than Proposition \ref{prop:ped}.
Indeed,
in the case of $\bar{a}^1$ is rationally independent,
suppose $u$ is not minimal, then there exists a $\phi$ with compact support such that
\begin{equation*}
  \int_{\R^n}[L(u)-L(u+\phi)]\ud x >0.
\end{equation*}
Thus
\begin{equation}\label{eq:kldfsjglkafdg}
\begin{split}
  0=&J_1(u)=\int_{\R^n}[L(u)-L(v)]\ud x\\
  >&\int_{\R^n} [L(u+\phi)-L(v)]\ud x=J_1(u+\phi).
  \end{split}
\end{equation}
If $u+\phi\in\Gamma_1$,
\eqref{eq:kldfsjglkafdg}
contradicts Proposition \ref{prop:dd}.
If $u+\phi\not\in\Gamma_1$,
the arguments of \eqref{eq:1234dd} show $J_1(u+\phi)\geq J_1(\max(\min(u+\phi,w),v))\geq 0$,
again a contradiction.
So $u$ is minimal.


\section{The second renormalized  functional and basic heteroclinic solution}\label{sec:4}


\subsection{The second renormalized  functional for the case $rank(\bar{\Lambda}_3)=rank(\bar{\Lambda}_2)-1$}
\

If $rank(\bar{\Lambda}_2)= rank(\bar{\Lambda}_1)-1$,
then the case $rank(\bar{\Lambda}_3)=rank(\bar{\Lambda}_2)-1$ has been treated 
by Rabinowitz and Stredulinsky (see \cite[Chapter 4]{RS}).

If $rank(\bar{\Lambda}_2)=0$, then $\bar{\Lambda}_3=\emptyset$.
There do not exist basic heteroclinic solutions.

Next we assume $1\leq rank(\bar{\Lambda}_2)\leq rank(\bar{\Lambda}_1)-2$.
As in Section \ref{sec:3}, without loss of generality,
we assume
$$\bar{a}^1=\frac{(-\alpha_1,\cdots, -\alpha_{n_2}, 0,\cdots,0,1)}{\|(-\alpha_1,\cdots, -\alpha_{n_2}, 0,\cdots,0,1)\|}$$
and $\bar{a}^2=\bar{\bd{e}}^{n_2+1}$, i.e., the $(n_2+1)$-th component is $1$ and other components $0$.
Suppose the following gap condition:
$$ v<w\in\MMM^{rec}(\bar{a}^1) \textrm{ are an adjacent pair}.
$$

Set $S_i:=\R^{n_2}\times [i,i+1]\times \T^{n-n_2-1}$ for $i\in\Z$.
Let
\begin{equation*}
  \hat{\Gamma}_2:=\hat{\Gamma}_2(v,w)
  :=\{u\in W^{1,2}_{loc}(\R^{n_2+1}\times\T^{n-n_2-1})\,|\, v\leq u\leq w\}.
\end{equation*}
For $\bd{p}\leq \bd{q}\in\Z^{n_2}$,
set $[\bd{p},\bd{q}]:=[\bd{p}_{1},\bd{q}_1 ]\times \cdots \times [\bd{p}_{n_2},\bd{q}_{n_2}]$.
Let $\bd{1}:=(1,\cdots, 1)\in \N^{n_2}$.
For $\bd{k}\in\Z^{n_2}$, set
\begin{equation*}
  \begin{split}
    T_{\bd{k}}  :=&[\bd{k},\bd{k}+\bd{1}]\times[0,1]\times \T^{n-n_2-1} \\
     := & [\bd{k}_1, \bd{k}_1+1]\times \cdots [\bd{k}_{n_2}, \bd{k}_{n_2}+1]\times[0,1]\times \T^{n-n_2-1}.
  \end{split}
\end{equation*}
For $u\in\hat{\Gamma}_2$ and $\bd{p}\leq \bd{q}\in\Z^{n_2}$,
we have
\begin{equation}\label{eq:s}
\begin{split}
&J_{1;\bd{p},\bd{q}}(u)\\
:=&\sum^{\bd{q}}_{\bd{k}=\bd{p}}\int_{T_{\bd{k}}}[L(u)-L(v)]\ud x\\
=& \sum^{\bd{q}}_{\bd{k}=\bd{p}}\int_{T_{\bd{k}}}\Big\{\frac{1}{2}|\nabla (u-v)|^2+\nabla (u-v)\nabla v +\left[F(x,u)-F(x,v)\right]\Big\}\ud x\\
=&:I_1+I_2+I_3,
\end{split}
\end{equation}
where $\sum^{\bd{q}}_{\bd{k}=\bd{p}}=\sum^{\bd{q}_1}_{\bd{k}_1=\bd{p}_1}\cdots \sum^{\bd{q}_{n_2}}_{\bd{k}_{n_2}=\bd{p}_{n_2}}$.
First $I_3$ can be estimated as in \eqref{eq:4.5}, so
\begin{equation*}
  \lim_{\bd{p}\to -\infty\atop \bd{q}\to \infty}I_3
\end{equation*}
exists, where
$\bd{p}\to -\infty$ (resp. $\bd{q}\to \infty$) means
$\bd{p}_i\to -\infty$ (resp. $\bd{q}_i\to \infty$) for all $1\leq i\leq n_2$.

Next we have
\begin{equation}\label{eq:dfk}
\begin{aligned}
I_2:=& \sum^{\bd{q}}_{\bd{k}=\bd{p}}\int_{T_{\bd{k}}} \nabla(u-v) \cdot \nabla v \ud x\\
=& \sum^{\bd{q}}_{\bd{k}=\bd{p}}\int_{\partial T_{\bd{k}}}(u-v) 
\frac{\partial v}{\partial \nu} \ud S -\sum^{\bd{q}}_{\bd{k}=\bd{p}}\int_{T_{\bd{k}}}(u-v) \Delta v \ud x.
\end{aligned}
\end{equation}
Since the second term on the right hand of \eqref{eq:dfk} can be estimated as in the proof of Proposition \ref{prop:2.8},
we only need to calculate the first term on the right hand of \eqref{eq:dfk}.
Notice that
\begin{equation*}
\begin{split}
  &\int_{\partial T_{\bd{k}}}(u-v) \frac{\partial v}{\partial \nu} \ud S\\
  =& \sum_{i=1}^{n_2}\int_{{[\bd{k}_1,\bd{k}_1+1]\times\cdots \times\{x_i=\bd{k}_i \textrm{ or }  \bd{k}_i+1\}\times\cdots \times [\bd{k}_{n_2},\bd{k}_{n_2}+1]\times[0,1]\times \T^{n-n_2-1}}}(u-v) \frac{\partial v}{\partial \nu} \ud S\\
  &\quad +\int_{{[\bd{k},\bd{k}+\bd{1}]\times\{x_{n_2+1}=0\}\times \T^{n-n_2-1}}}(u-v) \frac{\partial v}{\partial \nu} \ud S\\
  &\quad +\int_{{[\bd{k},\bd{k}+\bd{1}]\times\{x_{n_2+1}=1\}\times \T^{n-n_2-1}}}(u-v) \frac{\partial v}{\partial \nu} \ud S\\
  =:&I_{2,1}+I_{2,2}+I_{2,3}.
  \end{split}
\end{equation*}
So only $I_{2,2}$ and $I_{2,3}$ are needed to be calculated. Since they are estimated similarly,
we give the estimation of $I_{2,2}$.
Calculate
\begin{equation}\label{eq:lf}
\begin{split}
  |\sum^{\bd{q}}_{\bd{k}=\bd{p}}I_{2,2}|\leq &\sum^{\bd{q}}_{\bd{k}=\bd{p}}
\int_{[\bd{k},\bd{k}+\bd{1}]\times\{x_{n_2+1}=0\}\times \T^{n-n_2-1}}|(u-v) \frac{\partial v}{\partial \nu}| \ud S\\
   \leq &\sum^{\bd{q}}_{\bd{k}=\bd{p}}\norm{\frac{\partial v}{\partial x_{n_{2}+1}}}_{L^{\infty}(\R^n)} \int_{[\bd{k},\bd{k}+\bd{1}]\times\{x_{n_2+1}=0\}\times \T^{n-n_2-1}}(w-v) \ud S.
\end{split}
\end{equation}
Since
\begin{equation*}
  \sum_{\bd{k}\in\Z^{n_2}}\int_{[\bd{k},\bd{k}+\bd{1}]\times\{x_{n_2+1}=0\}\times \T^{n-n_2-1}}(w-v) \ud S
  \leq \textrm{Volume of $\T^{n}$ in $\R^{n}$} = 1,
\end{equation*}
\begin{equation*}
|I_{2,2}|\leq \norm{\frac{\partial v}{\partial x_2}}_{L^{\infty}(\R^{n})}
\end{equation*}
and
\begin{equation*}
  \lim_{\bd{p}\to -\infty \atop \bd{q}\to \infty}I_{2,2}
\end{equation*}
exists.
Hence by Lemma \ref{lem:moser},
\begin{equation*}
  \lim_{\bd{p}\to -\infty \atop \bd{q}\to \infty}\sum^{\bd{q}}_{\bd{k}=\bd{p}}\int_{\partial T_{\bd{k}}} (u-v) \frac{\partial v}{\partial \nu} \ud S
\end{equation*}
exists.
Therefore we have proved that
$J_{1;\bd{p},\bd{q}}(u)$ is bounded from below and
\begin{equation}\label{eq:4.9}
\begin{split}
  J_1(u):=&\lim_{\bd{p}\to -\infty \atop \bd{q}\to \infty}J_{1;\bd{p},\bd{q}}(u)\\
  =&\frac{1}{2}\norm{\nabla(u-v)}^2 _{L^2(S_0)}+ \int_{\R^{n_2}\times \{x_{n_2+1}=0 \,\,\textrm{or}\,\, 1\} \times \T^{n-n_2-1}}(u-v) \frac{\partial v}{\partial \nu} \ud S\\
  &\quad -\int_{S_{0}}\left(u-v\right) \Delta v \ud x +\int_{S_0}[F(x,u)-F(x,v)]\ud x
  \end{split}
\end{equation}
is well-defined.
Note that \eqref{eq:4.8} also holds, i.e.,
\begin{equation}\label{eq:4.81}
  J_{1}(u)=\infty \Longleftrightarrow \norm{\nabla (u-v)}_{L^2 (S_0)}=\infty.
\end{equation}

Now let us introduce the second renormalized functional, $J_2$ on $\hat{\Gamma}_2$.
Using the notation of Rabinowitz and Stredulinsky,
for $j\in\Z$ and $k=1,\cdots, n$,
set
$\tau_{-j}^{k}u(x):=u(x+j\bd{e}^k)$ with $\bd{e}^k$ the $k$-th unit vector of the coordinate systems.
For $u\in\hat{\Gamma}_2$ and $i\in\Z$, let
\begin{equation*}
J_{2, i}(u) := J_{1}\left(\tau_{-i}^{n_2+1} u\right)=J_{1}\left(\left.u\right|_{S_i}\right)
\end{equation*}
and
\begin{equation*}
J_{2;p,q}(u) := \sum_{i=p}^{q}J_{2,i}(u)
\end{equation*}
for $p\leq q\in\Z$.

\begin{prop}\label{prop:4.10}
Suppose $u\in\hat{\Gamma}_2$ and $p\leq q\in\Z$.
Then there is a constant $K_2=K_2(v,w,\alpha) \geq 0$ such that
\begin{equation*}
    J_{2;p,q}(u)\geq -K_2.
\end{equation*}
\end{prop}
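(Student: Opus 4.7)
The plan is to mimic the proof of Proposition \ref{prop:2.8}, now applied to the thickened slab $\Omega := \R^{n_2}\times[p,q+1]\times\T^{n-n_2-1}$, noting that by definition $J_{2;p,q}(u)=\int_\Omega[L(u)-L(v)]\,dx$. First I would write
\[
J_{2;p,q}(u) = \tfrac{1}{2}\|\nabla(u-v)\|_{L^2(\Omega)}^2 + \int_\Omega \nabla(u-v)\cdot\nabla v\,dx + \int_\Omega [F(x,u)-F(x,v)]\,dx,
\]
paralleling the decomposition used in the proof of Proposition \ref{prop:2.8}. The kinetic piece is non-negative and may be dropped. For the remaining two I would integrate by parts in the cross-term, use the identity $\Delta v=F_u(x,v)$, and rewrite them as a sum of boundary integrals on $\partial\Omega$ together with a Taylor remainder of the form $\tfrac{1}{2}\int_\Omega F_{uu}(x,\xi)(u-v)^2\,dx$, exactly as in the opening computation of this section leading to (\ref{eq:4.9}).

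The main organizational step is treating the boundary contribution. Writing $J_{2;p,q}(u)=\sum_{i=p}^{q}J_{2,i}(u)$ and expanding each summand via (\ref{eq:4.9}), the slab-interface integrals along $\{x_{n_2+1}=i\}$ for $p<i\le q$ cancel in pairs between consecutive slabs $S_{i-1}$ and $S_i$, because the outward normals of adjacent slabs point in opposite directions. Only the two end-face integrals at $x_{n_2+1}=p$ and $x_{n_2+1}=q+1$ survive. Each is bounded by $\|\partial v/\partial x_{n_2+1}\|_{L^\infty(\R^n)}\cdot\int_{\R^{n_2}\times\T^{n-n_2-1}}(w-v)\,dS$ along the corresponding slice, which is controlled via Moser's derivative bound (Lemma \ref{lem:moser}) together with the slicing form of Bangert's Lemma \ref{lem:bangert11} already used in estimating $|\sum_{\bd{k}}I_{2,2}|$ above. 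The Taylor remainder is estimated by $\tfrac{1}{2}\|F_{uu}\|_{L^\infty(\T^{n+1})}\int_\Omega(w-v)^2\,dx$ using $0\le u-v\le w-v$, and reduced via the $L^2$-bound in Lemma \ref{lem:bangert11} together with the $1$-periodicity of $w-v$ in $x_{n_2+1},\ldots,x_n$. The boundary contribution along the $\R^{n_2}$-directions at infinity is controlled exactly as in Proposition \ref{prop:2.8}, using that $(w-v)\in L^1(\R^{n_2}\times\T^{n-n_2})$ makes the face integrals decay at infinity. Assembling these estimates yields $J_{2;p,q}(u)\ge -K_2$ for some $K_2=K_2(v,w,\alpha)\ge 0$.

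The main obstacle I anticipate is verifying that the pairwise cancellation of the internal slab boundary integrals truly eliminates any $O(|q-p|)$ accumulation, so that the surviving constant $K_2$ is genuinely uniform in $p,q$; once this cancellation is in hand and combined with Bangert's slice estimates, the rest of the argument reduces to bookkeeping identical in spirit to the proof of Proposition \ref{prop:2.8}.
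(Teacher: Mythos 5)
Your treatment of the boundary terms is not where the trouble lies: summing \eqref{eq:4.9} over $i=p,\dots,q$ does make the interface integrals cancel in pairs, and the two surviving end--face integrals at $x_{n_2+1}=p$ and $x_{n_2+1}=q+1$ are bounded by a constant $C(\alpha)$ via Lemma \ref{lem:moser} and the slice estimate already used in deriving \eqref{eq:4.9}. (Also, $J_{2;p,q}(u)=\int_\Omega[L(u)-L(v)]\,\ud x$ is not ``by definition'' but an identity for an iterated limit; this is a minor point.) The genuine gap is the step where you dispose of the bulk term. After discarding the gradient term, your lower bound is $-\tfrac12\|F_{uu}\|_{L^\infty(\T^{n+1})}\int_\Omega(w-v)^2\,\ud x$ minus the boundary constant, with $\Omega=\R^{n_2}\times[p,q+1]\times\T^{n-n_2-1}$. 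But Lemma \ref{lem:bangert11} controls $\int(w-v)^2$ only over \emph{one} period of the $\bar{\Lambda}_2$--directions, i.e.\ over $\R^{n_2}\times\T^{n-n_2}$; since $w-v$ is $1$-periodic in $x_{n_2+1}$ and $\Omega$ contains $q+1-p$ such periods, the periodicity you invoke gives $\int_\Omega(w-v)^2\,\ud x\le q+1-p$ and nothing better (and generically this integral really is of order $q+1-p$, since $\int_{\R^{n_2}\times\T^{n-n_2}}(w-v)^2>0$). So your argument only yields $J_{2;p,q}(u)\ge -C(q+1-p)-C'$, whereas the assertion is a bound with $K_2$ independent of $p,q$. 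The $O(q-p)$ accumulation you worried about for the interface terms is absent there, but it reappears, fatally, in the Taylor remainder. This is precisely the point where Proposition \ref{prop:2.8} does not transcribe: there the corresponding terms were uniform in the box because Lemma \ref{lem:bangert11} bounds $\int(w-v)$ over the \emph{whole} domain $\R^{n_2}\times\T^{n-n_2}$ by $1$, whereas here the relevant domain $\R^{n_2}\times\R\times\T^{n-n_2-1}$ carries infinitely many periods of $w-v$.

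The missing ingredient is the minimality of $v$ (equivalently the nonnegativity of the first renormalized functional, $c_1=0$, cf.\ Lemma \ref{lem:3.621} and Proposition \ref{prop:dd}), used through a comparison argument rather than through absolute--value estimates of the integrand. Following \cite[Proposition 4.10]{RS}, one interpolates in the $x_{n_2+1}$-variable between $u$ and $v$ across the two extreme slabs $S_p$ and $S_q$, so that the modified function is a perturbation of $v$ supported in $p\le x_{n_2+1}\le q+1$; minimality of $v$ (with the limiting argument in the unbounded $\R^{n_2}$-directions, as in the proof of Proposition \ref{prop:dd}) makes the total energy difference nonnegative, and the only losses are the gluing costs on the two end slabs, which are bounded by a constant depending only on $v,w,\alpha$ thanks to $\int_{S_i}(w-v)\,\ud x\le1$, the slice estimates, and Lemma \ref{lem:moser}. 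Without an argument of this type --- i.e.\ with the gradient term simply dropped --- no pointwise estimate of $F(x,u)-F(x,v)-F_u(x,v)(u-v)$ can produce a constant independent of the number of slabs.
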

\begin{proof}
The proof is similar to that of \cite[Proposition 4.10]{RS} 
(see also Proposition \ref{prop:2.8})  
so we omit it here.
\end{proof}
For $u\in\hat{\Gamma}_2$, set
\begin{equation*}
  J_2(u)=\liminf_{p\to -\infty \atop q\to \infty}J_{2;p,q}(u).
\end{equation*}
Note that by \eqref{eq:4.81},
\begin{equation*}
\|\nabla(u-v)\|_{L^{2}\left(S_{i}\right)}=\infty \textrm{ for some } i \in \mathbb{Z} \Longrightarrow J_{2}(u)=\infty.
\end{equation*}
Similar to Lemma \ref{lem:2.22}, we have
\begin{lem}\label{lem:4.14}
If $u \in \hat{\Gamma}_{2}$, $p, q \in \mathbb{Z}$ with $p\leq q$, then
\begin{equation}\label{eq:4.15}
J_{2 ; p, q}(u) \leq J_{2}(u)+2 K_{2}.
\end{equation}
\end{lem}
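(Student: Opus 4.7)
The plan is to mimic the argument of Lemma 2.22, exploiting the additivity of $J_{2;p,q}$ in the index range together with the uniform lower bound from Proposition 4.10.

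First I would fix $u\in\hat{\Gamma}_2$ and $p\leq q\in\Z$, and consider arbitrary $p'\leq p$ and $q'\geq q$ in $\Z$. By the definition of $J_{2;p',q'}$ as a finite sum $\sum_{i=p'}^{q'}J_{2,i}(u)$, we have the clean decomposition
\begin{equation*}
J_{2;p',q'}(u)=J_{2;p',p-1}(u)+J_{2;p,q}(u)+J_{2;q+1,q'}(u),
\end{equation*}
with the usual convention that an empty sum is zero (which is what allows the two boundary cases $p'=p$ or $q'=q$ to be handled automatically).

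Next I would apply Proposition 4.10 to each of the two outer blocks: since they are of the form $J_{2;\cdot,\cdot}(u)$ with $u\in\hat{\Gamma}_2$, each is bounded below by $-K_2$, where $K_2=K_2(v,w,\alpha)$ is the constant from that proposition (independent of the endpoints). Substituting gives
\begin{equation*}
J_{2;p',q'}(u)\geq -K_2+J_{2;p,q}(u)-K_2 = J_{2;p,q}(u)-2K_2
\end{equation*}
for every such $p'\leq p$ and $q'\geq q$.

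Finally I would take the liminf of the left-hand side as $p'\to-\infty$ and $q'\to+\infty$, noting that the right-hand side is independent of $p',q'$. This yields
\begin{equation*}
J_2(u)=\liminf_{p'\to-\infty,\,q'\to+\infty}J_{2;p',q'}(u)\geq J_{2;p,q}(u)-2K_2,
\end{equation*}
which is precisely \eqref{eq:4.15}. There is no real obstacle here: the only subtlety is the choice to estimate just the two outer blocks (so the constant is $2K_2$ rather than growing with $|q-p|$), which is possible exactly because $J_{2;p,q}$ is an honest finite sum in the discrete index $i$; compare the analogous estimate in Lemma 2.22, where the multidimensional index $\bd{k}\in\Z^{n_2}$ forces the factor $M$ depending on $\dim V_2$, whereas here the one-dimensional index gives the clean factor $2$.
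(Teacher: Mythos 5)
Your proof is correct and is essentially the argument the paper has in mind: Lemma \ref{lem:4.14} is stated as ``similar to Lemma \ref{lem:2.22}'', whose proof (following Rabinowitz--Stredulinsky) is exactly this decomposition of $J_{2;p',q'}$ into the middle block plus the two outer blocks, the uniform lower bound $-K_2$ from Proposition \ref{prop:4.10} applied to each outer block, and passage to the $\liminf$ defining $J_2$. Your closing remark correctly identifies why the one-dimensional index $i\in\Z$ yields the constant $2K_2$ while the multidimensional index in Lemma \ref{lem:2.22} produces the dimension-dependent factor $M$.
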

Set
\begin{equation*}
\begin{split}
\Gamma_{2} := &\Gamma_{2}(v, w) \\
:=&\left\{u \in \hat{\Gamma}_{2} \,\Big|\,\lim_{i\to -\infty}\|u-v\|_{L^{2}\left(S_{i}\right)} = 0 \text { and } \right. \\
&\quad \quad\:\quad\quad \left.\lim_{i\to \infty}\|u-w\|_{L^{2}\left(S_{i}\right)} = 0 \right\}.
\end{split}
\end{equation*}

\begin{prop}\label{prop:2.24}
If $u\in\Gamma_2$ and $J_2(u)<\infty$, then
{\setlength\arraycolsep{2pt}
\begin{eqnarray}
  \lim_{|i| \rightarrow \infty}J_{2, i}(u) &= & 0,  \label{eq:2.25}\\
  \lim_{i \rightarrow -\infty}\left\|\tau_{-i}^{n_2+1} u-v\right\|_{W^{1,2}\left(S_{0}\right)} &=& 0, \label{eq:2.26}\\
  \lim_{i \rightarrow \infty}\left\|\tau_{-i}^{n_2+1} u-w\right\|_{W^{1,2}\left(S_{0}\right)} &= & 0,  \label{eq:2.27}\\
  J_{2}(u)&=&\lim _{p \rightarrow-\infty \atop q \rightarrow \infty} J_{2 ; p, q}(u) . \label{eq:2.28}
\end{eqnarray}}
\end{prop}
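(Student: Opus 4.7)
The plan is to mirror the very short proof of Proposition \ref{prop:2.241}, after first renormalizing the reference profile on the ``positive'' side so as to recover an analogue of \eqref{eq:4.81} separately on each half-strip. The key identity is $J_1(w)=0$. Since $w\in\MMM^{rec}(\bar{a}^1)\subset\MMM(\bar{a}^1)\cap\Gamma_1$, Proposition \ref{prop:dd} and Remark \ref{rem:kdk} give $J_1(w)=\int_{S_0}[L(w)-L(v)]\,dx=0$, and $x_{n_2+1}$-periodicity of $v,w$ then yields $\int_{S_i}[L(w)-L(v)]\,dx=0$ for every $i\in\Z$. Hence, for $p\le -1\le 0\le q$, one has the rearrangement
\begin{equation*}
J_{2;p,q}(u)=\int_{\Omega^{-}_{p}}[L(u)-L(v)]\,dx+\int_{\Omega^{+}_{q}}[L(u)-L(w)]\,dx,
\end{equation*}
with $\Omega^{-}_{p}:=\R^{n_2}\times[p,0]\times\T^{n-n_2-1}$ and $\Omega^{+}_{q}:=\R^{n_2}\times[0,q+1]\times\T^{n-n_2-1}$.

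The next step is to adapt the proof of Proposition \ref{prop:4.10} to each half-strip individually, obtaining uniform lower bounds $\int_{\Omega^{\pm}}\cdots\,dx\ge -C$ independent of $p,q$; the only ingredients needed are the finite-mass estimate of Lemma \ref{lem:bangert11} and Moser's gradient bound, Lemma \ref{lem:moser}. Since $J_2(u)=\liminf_{p\to-\infty,\,q\to\infty}J_{2;p,q}(u)<\infty$, selecting sequences $p_k,q_k$ realizing this $\liminf$ makes each half-strip integral bounded \emph{above} as well. The usual quadratic expansion
\begin{equation*}
\int_{\Omega^{-}_{p}}[L(u)-L(v)]\,dx=\tfrac12\|\nabla(u-v)\|_{L^2(\Omega^{-}_{p})}^2+E^{-}_{p}(u),
\end{equation*}
with $|E^{-}_{p}(u)|$ controlled uniformly in $p$ via Lemmas \ref{lem:bangert11}, \ref{lem:moser} and the $L^1$-decay $\|u-v\|_{L^1(S_i)}\to 0$ as $i\to-\infty$ (which follows from the $L^2$-decay built into $\Gamma_2$ together with dominated convergence against $w-v\in L^1(S_0)$), then produces $\nabla(u-v)\in L^2(\R^{n_2}\times(-\infty,0]\times\T^{n-n_2-1})$, and symmetrically $\nabla(u-w)\in L^2(\R^{n_2}\times[0,\infty)\times\T^{n-n_2-1})$.

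From here everything becomes routine. Tail-summability of $\|\nabla(u-v)\|_{L^2(S_i)}^2$ over $i\le 0$ forces these norms to $0$ as $i\to-\infty$; combining with the $L^2$-decay in $\Gamma_2$ gives \eqref{eq:2.26}, and the mirror argument gives \eqref{eq:2.27}. Writing $J_{2,i}(u)=J_1(\tau_{-i}^{n_2+1}u)$ and using continuity of $J_1$ in the $W^{1,2}(S_0)$ topology together with $J_1(v)=J_1(w)=0$ then yields \eqref{eq:2.25}, and the absolute summability of the series $\sum_i J_{2,i}(u)$ just established upgrades $\liminf$ to $\lim$, proving \eqref{eq:2.28}. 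The most delicate step is the uniform control of the boundary-trace terms at $\{x_{n_2+1}=p\}$ and $\{x_{n_2+1}=q+1\}$ appearing inside $E^{\pm}$: pointwise trace decay of $(u-v)$ on individual hyperplanes does not follow from its decay on the slabs $S_i$, so I would pass to an averaging subsequence $p_k\to-\infty$, $q_k\to\infty$ along which the traces $\int(u-v)\,\partial_\nu v\,dS$ are controlled. This subsequential control is enough because $J_2(u)$ is defined as a $\liminf$ in the first place, and \eqref{eq:2.28} then retroactively recovers the limit along the full sequence.
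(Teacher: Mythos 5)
Your rearrangement of $J_{2;p,q}(u)$ using $J_1(w)=0$ slab by slab is legitimate, and the one-sided lower bounds you want for the half-strip sums are indeed available (take $q=-1$ in Proposition \ref{prop:4.10}, and its $w$-renormalized analogue via Remark \ref{rem:kdk}). The genuine gap is in the next step: you claim the non-gradient errors $E^{\pm}_{p}(u)$ are ``controlled uniformly in $p$'' by Lemma \ref{lem:bangert11}, Lemma \ref{lem:moser} and the decay $\|u-v\|_{L^1(S_i)}\to 0$. But these bulk terms are sums over all slabs between $p$ and $0$ of quantities like $\int_{S_i}|F(x,u)-F(x,v)|\,\ud x\leq \norm{F_u}_{L^\infty}\|u-v\|_{L^1(S_i)}$ (or, after Taylor expansion about $v$, $\norm{F_{uu}}_{L^\infty}\|u-v\|^2_{L^2(S_i)}$), and decay of the individual slab norms built into $\Gamma_2$ does not bound their sum over the unboundedly many slabs in $\Omega^-_p$; Lemma \ref{lem:bangert11} controls $\int_{S_i}(w-v)$ on a single slab only, and summing that bound over $i$ diverges. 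This is precisely the difficulty that makes the second renormalization delicate. Consequently neither $\nabla(u-v)\in L^2$ of the lower half-strip nor $\nabla(u-w)\in L^2$ of the upper half-strip is established, and with them your derivations of \eqref{eq:2.26}--\eqref{eq:2.27}, and then of \eqref{eq:2.25}, collapse. Note also that these square-summability claims are strictly stronger than the proposition's conclusions; the paper records only the reverse implication (the line after the definition of $J_2$).

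The proof of \eqref{eq:2.28} has the same defect and an additional one. You invoke ``absolute summability of $\sum_i J_{2,i}(u)$ just established,'' but even granting summability of the gradient terms you would still need summability of the zeroth-order and trace contributions to $J_{2,i}(u)$, which is not shown; and the closing remark that subsequential control of the traces at $x_{n_2+1}=p,\,q+1$ is acceptable ``because \eqref{eq:2.28} retroactively recovers the limit'' is circular, since \eqref{eq:2.28} is exactly what is being proven and your route to it passes through that control. What \eqref{eq:2.28} (and \eqref{eq:2.25}) really requires is a quantitative tail estimate, e.g.\ that window sums $J_{2;a,b}(u)$ over windows with $b\leq -N$ or $a\geq N$ are bounded below by $-\epsilon(N)$ with $\epsilon(N)\to 0$, or a slabwise bound $J_{2,i}(u)\geq -\epsilon_i$ with $\epsilon_i\to 0$; this is supplied in the argument the paper invokes (\cite[Proposition 4.16]{RS}) by exploiting the minimality of $v$ and $w$ through the variational identities behind $c_1=0$ and Theorem \ref{thm:2.72}, in combination with Proposition \ref{prop:4.10} and Lemma \ref{lem:4.14}, after which the $W^{1,2}$ decay \eqref{eq:2.26}--\eqref{eq:2.27} follows from the coercivity of the per-slab functional. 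Decay of $\|u-v\|_{L^2(S_i)}$ alone, which is all your outline uses at this point, cannot produce such a bound, so the proposal as written does not prove the proposition.
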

\begin{proof}
The proof is similar to that of \cite[Proposition 4.16]{RS}  
so we omit it here.
\end{proof}

\begin{cor}\label{cor:2.49}
Suppose $u\in\hat{\Gamma}_2(v,w)$, $J_2(u)<\infty$, and $u\leq \tau_{-1}^{n_2+1}u$.
Then either (i)  $u\in\MM_1$\footnote[1]{Recall that $\MM_1:=\MM_1(v,w):=\{\phi\in\Gamma_1(v,w) \,|\, J_1(\phi)=c_1=0\}$.},  or  (ii)  there  are $\phi,\psi\in\MM_1$ with $v\leq \phi<\psi\leq w$ such  that
$u\in\Gamma_2(\phi,\psi)$.
\end{cor}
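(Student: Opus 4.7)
Set $u_i := \tau_{-i}^{n_2+1} u$ for $i\in\Z$. Since $\bd{e}^{n_2+1}\in\bar{\Lambda}_2$, both $v$ and $w$ are fixed by $\tau_{-1}^{n_2+1}$, so the hypothesis $u\leq\tau_{-1}^{n_2+1}u$ iterates to $u_i\leq u_{i+1}$ with the uniform sandwich $v\leq u_i\leq w$. Monotone convergence supplies the pointwise limits
$$\phi(x):=\lim_{i\to-\infty}u_i(x),\qquad \psi(x):=\lim_{i\to+\infty}u_i(x),$$
which are $\bd{e}^{n_2+1}$-periodic and satisfy $v\leq\phi\leq u\leq\psi\leq w$. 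If $\phi\equiv\psi$, then $u_0=u_1$, i.e.\ $u\equiv\tau_{-1}^{n_2+1}u$, so $u\in\Gamma_1(v,w)$. Translation invariance then gives $J_{2,i}(u)=J_1(u)$ for every $i\in\Z$; combined with $J_2(u)<\infty$ and $J_1(u)\geq c_1=0$ (Proposition~\ref{prop:dd}), this forces $J_1(u)=0$, i.e.\ $u\in\MM_1$, which is conclusion~(i).

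In the nontrivial case $\phi\not\equiv\psi$ the goal is to show $\phi,\psi\in\MM_1$ with $\phi<\psi$ strictly and $u\in\Gamma_2(\phi,\psi)$. The pointwise bound $|u_i-v|\leq w-v$ together with $\|w-v\|_{L^2(\R^{n_2}\times\T^{n-n_2})}\leq 1$ (Lemma~\ref{lem:bangert11}) gives, by dominated convergence, $u_{-i}\to\phi$ and $u_i\to\psi$ in $L^2(S_0)$ as $i\to+\infty$. Writing, as in~\eqref{eq:ssddhj},
$$J_{2,i}(u)=\tfrac12\|\nabla(u-v)\|_{L^2(S_i)}^2+R_i,$$
with $R_i$ uniformly bounded in $i$ (using Lemma~\ref{lem:moser} to bound $\nabla v$ and Lemma~\ref{lem:bangert11} to bound $\|w-v\|_{L^1}$ on each slab), and combining with the lower bound $J_{2,i}(u)\geq-K_2$ (Proposition~\ref{prop:4.10}) and the partial-sum bound $J_{2;p,q}(u)\leq J_2(u)+2K_2$ (Lemma~\ref{lem:4.14}), one obtains uniform $W^{1,2}(S_0)$-bounds on $(u_i)$. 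Weak compactness then places $\phi,\psi\in\Gamma_1(v,w)$. The $S_0$-formula~\eqref{eq:4.9} applied to the non-periodic inputs $u_i$ gives $J_1(u_i)=J_{2,i}(u)$, and weak lower semicontinuity yields
$$0\leq J_1(\psi)\leq \liminf_{i\to+\infty}J_{2,i}(u).$$
If $J_{2,i}(u)\geq\varepsilon>0$ for all sufficiently large $i$, then $J_{2;0,q}(u)\to+\infty$, contradicting Lemma~\ref{lem:4.14}; hence $\liminf_{i\to+\infty}J_{2,i}(u)\leq 0$, so $J_1(\psi)=0$ and $\psi\in\MM_1$ by Proposition~\ref{prop:ped}. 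The argument for $\phi$ is symmetric.

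The strict inequality $\phi<\psi$ on $\R^n$ then follows from Lemma~\ref{lem:rab} applied to the classical solutions $\phi,\psi\in\MMM(\bar{a}^1)$, since $\phi\leq\psi$ and $\phi\not\equiv\psi$. The $L^2$-convergences already established translate by translation invariance to $\|u-\psi\|_{L^2(S_i)}\to 0$ as $i\to+\infty$ and $\|u-\phi\|_{L^2(S_i)}\to 0$ as $i\to-\infty$, which is precisely $u\in\Gamma_2(\phi,\psi)$. The main obstacle is the step $\liminf_{i\to+\infty}J_{2,i}(u)\leq 0$: Proposition~\ref{prop:2.24} yields the stronger conclusion $J_{2,i}(u)\to 0$, but only under the $\Gamma_2(v,w)$-boundary conditions, which are not available here, so one must pass through the weaker $\liminf$ argument and then invoke the lower bound $J_1\geq 0$ on $\Gamma_1(v,w)$ (Proposition~\ref{prop:dd}) to extract the full conclusion.
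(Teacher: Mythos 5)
Your argument is correct and follows essentially the same route as the proof the paper defers to (\cite[Corollary 2.49]{RS}): the monotone translates $\tau_{-i}^{n_2+1}u$ converge to $x_{n_2+1}$-periodic limits $\phi,\psi$, which the uniform bounds from Proposition \ref{prop:4.10} and Lemma \ref{lem:4.14} together with weak lower semicontinuity of $J_{2,i}$ (Lemma \ref{lem:4.26}) force into $\MM_1$, while Lemma \ref{lem:rab} and the $L^2(S_i)$-convergence of the translates give $\phi<\psi$ and $u\in\Gamma_2(\phi,\psi)$, with the degenerate case $\phi\equiv\psi$ yielding $u\in\MM_1$. Your explicit remark that Proposition \ref{prop:2.24} is not applicable without the $\Gamma_2(v,w)$-asymptotics, so that one must instead use the $\liminf_{|i|\to\infty}J_{2,i}(u)\le 0$ argument combined with $J_1\ge c_1=0$ on $\Gamma_1$, is precisely the point requiring care, and you handle it correctly.
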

\begin{proof}
The proof is similar to that of \cite[Corollary 2.49]{RS}  
so we omit it here.
\end{proof}

We also need to show that $J_{2,i}$ is weakly lower semicontinuous.

\begin{lem}\label{lem:4.26}
Suppose $i\in\Z$ and $\MY\subset \hat{\Gamma}_2$ with $J_{2,i}(u)<\infty$ for all $u\in\MY$.
Then $J_{2,i}$ is weakly lower semicontinuous (with respect to $\norm{\cdot}_{W^{1,2}(S_i)}$) on $\MY$.
\end{lem}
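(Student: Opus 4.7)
The plan is to reduce to $i=0$ by translation invariance, since $\tau^{n_2+1}_{-i}$ is an isometry of $W^{1,2}(S_i)$ onto $W^{1,2}(S_0)$ that preserves the functional, and then to split $J_{2,0}(u)=J_1(u|_{S_0})$ according to the explicit formula \eqref{eq:4.9} as
\begin{equation*}
J_{2,0}(u)=E(u)+B(u)+V(u)+G(u),
\end{equation*}
where $E(u):=\tfrac{1}{2}\|\nabla(u-v)\|^2_{L^2(S_0)}$, $B(u)$ is the surface integral of $(u-v)\,\partial v/\partial\nu$ over the two flat faces $\{x_{n_2+1}=0\}\cup\{x_{n_2+1}=1\}$, $V(u):=-\int_{S_0}(u-v)\Delta v\,\ud x$, and $G(u):=\int_{S_0}[F(x,u)-F(x,v)]\,\ud x$. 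I will show that $E$ is weakly lower semicontinuous on $W^{1,2}(S_0)$ and that each of $B$, $V$, $G$ is weakly continuous on $\MY$, which together yield the desired lower semicontinuity of $J_{2,0}$.

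For $E$ the standard convexity argument applies: weak convergence $u_k\rightharpoonup u$ in $W^{1,2}(S_0)$ implies $\nabla(u_k-v)\rightharpoonup\nabla(u-v)$ in $L^2(S_0)$ (since $v$ is fixed), and the squared $L^2$-norm is convex and norm-continuous, hence weakly lower semicontinuous.

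For the three lower-order terms I would use a truncate-and-control-tails strategy. Set $Q_R:=[-R,R]^{n_2}\times[0,1]\times\T^{n-n_2-1}\subset S_0$. Because $v,w\in\MMM^{rec}(\bar{a}^1)$ are adjacent, Lemma \ref{lem:bangert11} together with the boundary-face estimate already employed in the proofs of Propositions \ref{prop:2.8} and \ref{prop:4.10} provides $\|w-v\|_{L^1(S_0)}\leq 1$ and $\int_{\R^{n_2}\times\{x_{n_2+1}=j\}\times\T^{n-n_2-1}}(w-v)\,\ud S\leq 1$ for $j=0,1$; combining these with Moser's bound $\|\nabla v\|_{L^\infty}\leq C(\alpha)$ from Lemma \ref{lem:moser} and $|\Delta v|=|F_u(\cdot,v)|\leq \|F_u\|_{L^\infty(\T^{n+1})}$, and using $v\leq u\leq w$ for every $u\in\hat{\Gamma}_2$, shows that
\begin{equation*}
\sup_{u\in\hat{\Gamma}_2}\bigl[|B(u)-B_{Q_R}(u)|+|V(u)-V_{Q_R}(u)|+|G(u)-G_{Q_R}(u)|\bigr]\to 0 \quad \text{as }R\to\infty,
\end{equation*}
where the subscript $Q_R$ indicates restriction of the domain of integration to $Q_R$ (or to its relevant boundary piece). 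On the bounded Lipschitz cylinder $Q_R$, the Rellich--Kondrachov theorem yields $u_k\to u$ strongly in $L^2(Q_R)$ and compactness of the trace operator yields $u_k\to u$ strongly in $L^2(\partial Q_R)$; combined with boundedness of $\partial v/\partial\nu$ and $\Delta v$ and the Lipschitz bound $|F(x,u_k)-F(x,u)|\leq\|F_u\|_{L^\infty(\T^{n+1})}|u_k-u|$, this gives $B_{Q_R}(u_k)\to B_{Q_R}(u)$, $V_{Q_R}(u_k)\to V_{Q_R}(u)$, $G_{Q_R}(u_k)\to G_{Q_R}(u)$. Choosing $R=R(\varepsilon)$ first and then letting $k\to\infty$ concludes that $B$, $V$, $G$ are weakly continuous on $\MY$.

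The main obstacle is the unboundedness of $S_0$ in the horizontal $\R^{n_2}$ directions: Rellich--Kondrachov and compactness of the trace fail on unbounded domains, so weak convergence in $W^{1,2}(S_0)$ does not directly provide strong convergence of $u_k-v$ on $S_0$. The uniform integrability of $w-v$ on $S_0$ and on its flat boundary pieces, inherited from the adjacency of $v,w$ in $\MMM^{rec}(\bar{a}^1)$ through Lemma \ref{lem:bangert11} and Moser's derivative bound, is exactly what permits trading the unbounded domain for the bounded cylinder $Q_R$ plus a uniformly small remainder, after which the argument becomes standard.
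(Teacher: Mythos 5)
Your proof is correct and follows essentially the argument the paper delegates to \cite[Lemma 4.26]{RS}: split $J_{2,i}$ via the representation \eqref{eq:4.9}, use weak lower semicontinuity of the quadratic gradient term, and obtain weak continuity of the volume and boundary lower-order terms from the uniform tail bounds supplied by Lemma \ref{lem:bangert11}, the face estimate, and Lemma \ref{lem:moser}, combined with Rellich and trace compactness on the bounded cylinders $Q_R$. Nothing further is needed.
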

\begin{proof}
The proof is similar to that of \cite[Lemma 4.26]{RS}  
so we omit it here.
\end{proof}

Following \cite{RS} a compactness property of $J_2$ is needed.

\begin{prop}\label{prop:4.29}
Let $\MY\subset \hat{\Gamma}_2(v,w)$ with the property
\begin{enumerate}
  \item[$(Y^2 _1)$] \label{eq:Y12}
if $u\in\MY$ and $\chi_{R}\in \hat{\Gamma}_2(v,w)$ with
$\chi_R (x)=u(x)$ for $|x_{n_2+1}|\geq R$,
then $\chi_R \in\MY $ for all large $R$.
\end{enumerate}
Define
\begin{equation}\label{eq:4.30}
  c(\MY)=\inf_{u\in\MY}J_2(u).
\end{equation}
If $c(\MY)<\infty$ and $(u_k)$ is a minimizing sequence for
\eqref{eq:4.30},
then there is a $U\in\hat{\Gamma}_2(v,w)$ such that along a subsequence,
$u_k-v\to U-v$ in $W^{1,2}(S_i)$ for all $i\in\Z$.
\end{prop}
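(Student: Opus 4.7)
\noindent
\emph{Proof plan.}
The plan is to follow the strategy used by Rabinowitz--Stredulinsky for the analogous compactness statement \cite[Proposition 4.29]{RS}, adapting it to our setting where each strip $S_i=\R^{n_2}\times[i,i+1]\times\T^{n-n_2-1}$ is unbounded in the $\R^{n_2}$-directions.

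First, I would extract a weak subsequential limit. Since $v\leq u_k\leq w$ pointwise, the sequence is uniformly bounded in $L^{\infty}$, and the tail bound for $w-v$ from Lemma \ref{lem:bangert11} gives, for every $\epsilon>0$, a finite box $B_{\bd{p}}^{\bd{q}}=\{\bd{k}\in\Z^{n_2}:\bd{p}\le\bd{k}\le\bd{q}\}$ outside of which $\|u_k-v\|_{L^2(T_{\bd{k}})}\leq \|w-v\|_{L^2(T_{\bd{k}})}$ is uniformly small in $k$. Combined with the minimizing property $J_2(u_k)\to c(\MY)$ and Lemma \ref{lem:4.14}, this yields a uniform bound on $J_{2;-p,p}(u_k)$ for each $p$, hence on $\|\nabla(u_k-v)\|_{L^2(S_i)}$ for each $i\in\Z$ via the expression \eqref{eq:4.9}. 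A standard diagonal extraction in $i$ then produces a subsequence (still written $u_k$) and a function $U$ with $v\leq U\leq w$ such that $u_k-v\rightharpoonup U-v$ weakly in $W^{1,2}(S_i)$ for every $i\in\Z$; clearly $U\in\hat{\Gamma}_2(v,w)$.

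Second, I would upgrade weak convergence to strong convergence by a cut-and-paste argument exploiting property $(Y_1^2)$. For large $R\in\N$, pick a smooth $\eta_R:\R\to[0,1]$ with $\eta_R(t)=0$ for $|t|\leq R$, $\eta_R(t)=1$ for $|t|\geq R+1$, and $|\eta_R'|\leq 2$, and set
\begin{equation*}
  \chi_R^k(x):=(1-\eta_R(x_{n_2+1}))\,U(x)+\eta_R(x_{n_2+1})\,u_k(x).
\end{equation*}
Since this is a convex combination, $v\leq \chi_R^k\leq w$, so $\chi_R^k\in\hat{\Gamma}_2(v,w)$, and $\chi_R^k=u_k$ on $|x_{n_2+1}|\geq R+1$. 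By property $(Y_1^2)$, $\chi_R^k\in\MY$ for all large $R$, hence $c(\MY)\leq J_2(\chi_R^k)$. On the interior $|x_{n_2+1}|\leq R$ we have $\chi_R^k=U$, contributing $J_{2;-R,R-1}(U)$; on the tail $|x_{n_2+1}|\geq R+1$, $\chi_R^k$ and $u_k$ coincide. The transition strips $S_{-R-1}$ and $S_R$ contribute terms that, for each fixed $R$, tend to $J_{2,-R-1}(U)+J_{2,R}(U)$ as $k\to\infty$; this is where weak convergence in $W^{1,2}(S_{\pm R})$, the $L^\infty$ bound, and the uniform smallness of $\|u_k-v\|_{L^2(T_{\bd{k}})}$ for $|\bd{k}|$ large all enter, to pass to the limit in the quadratic, boundary, and potential terms of \eqref{eq:4.9}.

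Assembling these estimates yields
\begin{equation*}
  c(\MY)\leq \limsup_{k\to\infty}J_2(\chi_R^k)\leq J_{2;-R-1,R}(U)+\limsup_{k\to\infty}\sum_{|i|\geq R+1}J_{2,i}(u_k).
\end{equation*}
Subtracting this from $J_2(u_k)\to c(\MY)$ and using the weak lower semicontinuity of each $J_{2,i}$ (Lemma \ref{lem:4.26}) together with Lemma \ref{lem:4.14}, I would deduce
$\lim_k J_{2,i}(u_k)=J_{2,i}(U)$ for every $i\in\Z$. In view of the explicit form of $J_{2,i}$ in \eqref{eq:4.9}, this forces $\|\nabla(u_k-v)\|_{L^2(S_i)}\to\|\nabla(U-v)\|_{L^2(S_i)}$, and combined with the weak convergence this gives $u_k-v\to U-v$ strongly in $W^{1,2}(S_i)$.

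The main obstacle is controlling the transition strips in the cut-and-paste estimate: because $S_i$ is unbounded in $\R^{n_2}$, one cannot invoke Rellich--Kondrachov directly, and instead must combine the uniform $L^1$-tail bound for $w-v$ from Lemma \ref{lem:bangert11} with the $L^\infty$ control $v\le u_k\le w$ to pass to the limit in the cross terms and the boundary integrals of \eqref{eq:4.9}.
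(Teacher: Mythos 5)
Your skeleton (uniform bounds from Lemma \ref{lem:4.14}, Proposition \ref{prop:4.10} and \eqref{eq:4.9} to extract a weak subsequential limit $U\in\hat\Gamma_2(v,w)$; a cut-and-paste comparison through $(Y^2_1)$; treatment of the unbounded cross-sections of $S_i$ by Rellich on bounded pieces combined with $v\le u_k\le w$ and the tail bound from Lemma \ref{lem:bangert11}) is exactly the Rabinowitz--Stredulinsky strategy the paper defers to, and the first step is fine. But there is a genuine gap at the decisive point: the claim that, for fixed $R$, the transition strips contribute terms tending to $J_{2,-R-1}(U)+J_{2,R}(U)$ as $k\to\infty$. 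On $S_R$ one has $\chi^k_R-v=(U-v)+\eta_R\,(u_k-U)$, hence
\begin{equation*}
\tfrac12\bigl|\nabla(\chi^k_R-v)\bigr|^2
=\tfrac12|\nabla(U-v)|^2
+\nabla(U-v)\cdot\bigl(\eta_R\nabla(u_k-U)+\eta_R'(u_k-U)\bd{e}^{n_2+1}\bigr)
+\tfrac12\bigl|\eta_R\nabla(u_k-U)+\eta_R'(u_k-U)\bd{e}^{n_2+1}\bigr|^2 .
\end{equation*}
The cross term and all $\eta_R'$-terms do vanish in the limit by the tools you list, but the remaining term $\tfrac12\int_{S_R}\eta_R^2|\nabla(u_k-U)|^2\ud x$ does not: weak convergence in $W^{1,2}(S_R)$ gives only lower semicontinuity of the Dirichlet part, and the $L^\infty$ and $L^1$/$L^2$-tail bounds control zeroth-order quantities only. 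Asserting that this term tends to $0$ is asserting strong $W^{1,2}(S_R)$-convergence on the transition strips, i.e.\ precisely what the proposition is to prove; as written the step is circular, and consequently the displayed inequality $c(\MY)\le J_{2;-R-1,R}(U)+\limsup_k\sum_{|i|\ge R+1}J_{2,i}(u_k)$ is not justified.

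The repair keeps your structure but compares the transition cost with $u_k$ rather than with $U$. Writing $\nabla(u_k-v)=\nabla(U-v)+\nabla(u_k-U)$ and using $\eta_R^2\le 1$, the same expansion gives $J_{2,R}(\chi^k_R)\le J_{2,R}(u_k)+o_k(1)$ (the term $(\eta_R^2-1)|\nabla(u_k-U)|^2$ has a sign; the cross term tends to $0$ by weak convergence; the $\eta_R'$-terms, the boundary term of \eqref{eq:4.9} and the $F$-terms tend to $0$ by the strong $L^2(S_R)$-convergence of $u_k-U$, obtained as you indicate from Rellich on bounded pieces plus $|u_k-U|\le w-v$ and the tails of $\int(w-v)$), and similarly on $S_{-R-1}$. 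Since $\chi^k_R$ differs from $u_k$ on finitely many strips, $J_2(\chi^k_R)=J_2(u_k)+J_{2;-R-1,R}(\chi^k_R)-J_{2;-R-1,R}(u_k)$, so $(Y^2_1)$ and $c(\MY)\le J_2(\chi^k_R)$ yield $\limsup_k J_{2;-R,R-1}(u_k)\le J_{2;-R,R-1}(U)$. Combined with the weak lower semicontinuity of each $J_{2,i}$ (Lemma \ref{lem:4.26}) this gives $J_{2;-R,R-1}(u_k)\to J_{2;-R,R-1}(U)$ for every $R$; then, by \eqref{eq:4.9} and the convergence of the lower-order terms, $\norm{\nabla(u_k-v)}_{L^2}\to\norm{\nabla(U-v)}_{L^2}$ on each finite union of strips, and weak convergence plus norm convergence gives the asserted strong convergence in $W^{1,2}(S_i)$. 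With this single change your plan closes and coincides with the argument of \cite[Proposition 4.29]{RS} that the paper cites.
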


\begin{proof}
The proof is similar to that of \cite[Proposition 4.29]{RS}  
so we omit it here.
\end{proof}

We have an analogue of Proposition \ref{prop:2.64}.

\begin{prop}\label{prop:4.35}
Under the hypotheses of Proposition \ref{prop:4.29},
suppose
\begin{enumerate}
  \item[$(Y^2 _2)$] \label{eq:Y22}
there is a minimizing sequence $(u_k)$ for \eqref{eq:4.30} such that for some $r\in (0,\frac{1}{2})$,
some $z\in\R$, all smooth $\phi$ with support in $B_r(z)$, and associated $t_0(\phi)>0$,
\begin{equation}\label{eq:4.35-1}
c(\MY) \leq J_{2}\left(u_{k}+t \phi\right)+\delta_{k}
\end{equation}
for all $|t|\leq t_0(\phi)$, where $\delta_k=\delta_k(\phi)\to 0$ as $k\to \infty$.
\end{enumerate}
Then the weak limit $U$ of $u_k$ satisfies \eqref{eq:PDE} in $B_r(z)$.
\end{prop}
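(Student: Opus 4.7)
The plan is to follow the scheme of Proposition \ref{prop:2.64}, but with the slab decomposition of $J_2$ replacing the cubical decomposition used for $J_1$. First I would invoke Proposition \ref{prop:4.29} to extract, along a subsequence still denoted $(u_k)$, a limit $U\in\hat{\Gamma}_2(v,w)$ with $u_k-v\to U-v$ in $W^{1,2}(S_i)$ for every $i\in\Z$. Since $r<\frac{1}{2}$, the ball $B_r(z)$ meets at most two consecutive slabs, so let $\mathcal{I}\subset\Z$ denote this finite index set.

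Next I would exploit the fact that $\phi$ is supported in $B_r(z)$ and that the boundary integrals appearing in \eqref{eq:4.9} involve only $v$ (not $u$), so they cancel in any difference of $J_{2,i}$-values. This yields the clean formula
\begin{equation*}
J_2(u_k+t\phi)-J_2(u_k)=\sum_{i\in\mathcal{I}}\int_{S_i\cap B_r(z)}[L(u_k+t\phi)-L(u_k)]\,\ud x.
\end{equation*}
Combining $J_2(u_k)\to c(\MY)$ with the hypothesis \eqref{eq:4.35-1} then gives $J_2(u_k+t\phi)-J_2(u_k)\geq -\varepsilon_k$ with $\varepsilon_k\to 0$ uniformly for $|t|\leq t_0(\phi)$.

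The third step is the passage to the limit $k\to\infty$ inside this finite sum on the bounded domain $B_r(z)$. Weak $W^{1,2}$ convergence handles the linear cross-term $\int t\,\nabla u_k\cdot\nabla\phi$, while Rellich compactness together with the uniform sandwich $v\leq u_k\leq w$ allows dominated convergence in the potential term $\int[F(x,u_k+t\phi)-F(x,u_k)]\,\ud x$. The outcome is the variational inequality
\begin{equation*}
\int_{B_r(z)}[L(U+t\phi)-L(U)]\,\ud x\geq 0 \quad \textrm{for all } |t|\leq t_0(\phi).
\end{equation*}
Since the left-hand side is a smooth function of $t$ attaining its minimum at $t=0$, differentiating at $t=0$ yields the weak Euler--Lagrange identity $\int_{B_r(z)}[\nabla U\cdot\nabla\phi+F_u(x,U)\phi]\,\ud x=0$ for every smooth $\phi$ with support in $B_r(z)$; standard elliptic regularity then upgrades $U$ to a classical solution of \eqref{eq:PDE} in $B_r(z)$.

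The hardest part will be making sure that the convergence delivered by Proposition \ref{prop:4.29} really does pass to the limit in each term of the finite sum in a uniform way for $|t|\leq t_0(\phi)$; because $B_r(z)$ is bounded and $u_k$ is uniformly trapped between $v$ and $w$, Rellich compactness and dominated convergence cover this routinely. All remaining ingredients, including the cancellation of the boundary integrals in \eqref{eq:4.9}, mirror the argument already carried out for $J_1$ in Proposition \ref{prop:2.64}.
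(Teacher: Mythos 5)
Your overall architecture is the right one and matches what the paper intends by deferring to Proposition \ref{prop:2.64} and to \cite[Propositions 2.64 and 4.35]{RS}: extract the limit $U$ via Proposition \ref{prop:4.29}, reduce $J_2(u_k+t\phi)-J_2(u_k)$ to an integral over the bounded set $\operatorname{supp}\phi$, pass to the limit there, and differentiate in $t$ at $0$ to get the weak Euler--Lagrange equation, followed by elliptic regularity.

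One piece of your justification is off, though the formula you derive from it is correct. You assert that the boundary integrals in \eqref{eq:4.9} "involve only $v$ (not $u$), so they cancel in any difference of $J_{2,i}$-values." Looking at \eqref{eq:4.9}, the boundary term is $\int (u-v)\,\partial v/\partial\nu\,\ud S$, which does involve $u$; in the difference $J_{2,i}(u+t\phi)-J_{2,i}(u)$ it contributes $\int t\phi\,\partial v/\partial\nu\,\ud S$, which is not zero. The correct justification for your clean identity
\begin{equation*}
J_2(u_k+t\phi)-J_2(u_k)=\int_{\operatorname{supp}\phi}\bigl[L(u_k+t\phi)-L(u_k)\bigr]\,\ud x
\end{equation*}
is more direct: since $J_{2,i}(u)$ is, when finite, exactly $\int_{S_i}[L(u)-L(v)]\,\ud x$, the common $-L(v)$ subtracts out of $J_{2,i}(u_k+t\phi)-J_{2,i}(u_k)$ and the remaining integrand $L(u_k+t\phi)-L(u_k)$ is supported in $\operatorname{supp}\phi$, so the improper integrals over $\R^{n_2}\times\T^{n-n_2}$ collapse to a proper integral over a bounded set. (Equivalently, the residual boundary contribution $\int t\phi\,\partial v/\partial\nu\,\ud S$ combines via integration by parts with the $-\int t\phi\,\Delta v$ and cross-gradient terms to reconstitute $\int t\,\nabla u\cdot\nabla\phi$.) With this fix the rest of your passage to the limit — weak $W^{1,2}$ convergence for the gradient cross-term, Rellich plus the uniform sandwich $v\le u_k\le w$ for the $F$-term — goes through as you describe.
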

\begin{proof}
The proof is similar to that of Proposition \ref{prop:2.64} 
(see also \cite[Propositions 2.64 and 4.35]{RS}) so we omit it here.
\end{proof}

As in \cite{RS}, for $\tilde{v}\in\MM_1\setminus \{v,w\}$, set
\begin{equation*}
\Gamma_{2}(\tilde{v})=\left\{u \in \hat{\Gamma}_{2}\left(v, w \right) |\left\|\tau_{-i}^{n_2+1} u-\tilde{v}\right\|_{L^{2}\left(S_{0}\right)} \rightarrow 0 \text { as }|i| \rightarrow \infty\right\}.
\end{equation*}
For $\tilde{v}\in\{v,w\}$, let
\begin{equation}\label{eq:gamma1}
\begin{split}
  \Gamma_2(\tilde{v})=\{u\in W^{1,2}_{loc}(\R^{n_2+1}\times\T^{n-n_2-1})\,|\,
  &\textrm{either (a) } u \in \hat{\Gamma}_{2}\left(v, w \right) \textrm{ such that } \\
  &\left\|\tau_{-i}^{n_2+1} u-\tilde{v}\right\|_{L^{2}\left(S_{0}\right)} \rightarrow 0 \text { as }|i| \rightarrow \infty, \\
  &\textrm{ or (b) $u-\tilde{v}$ has compact support}\}.
  \end{split}
\end{equation}

\begin{rem}
Although our definition of ${\Gamma}_2(\tilde{v})$ is
different with \cite[Theorem 2.72]{RS},
it is enough for the arguments in \cite{RS}.
\end{rem}

By Remark \ref{rem:kddfff}, $J_2$ is well-defined on $\Gamma_2(\tilde{v})$ for all $\tilde{v}\in \MM_1$.
\begin{rem}\label{rem:2.70}
As in \cite{RS}, 
it is easy to see that \eqref{eq:2.25} and \eqref{eq:2.28} in Proposition \ref{prop:2.24} hold 
for $u\in \Gamma_2(\tilde{v})$ satisfying $J_2(u)<\infty$, and \eqref{eq:2.26} replaced by 
$\lim\limits_{|i|\to\infty}\|\tau^{n_2 +1}_{-i}u-v\|_{W^{1,2}(S_0)}=0$.
\end{rem}
Define
\begin{equation*}
c_{2}(\tilde{v}):=\inf _{u \in \Gamma_{2}(\tilde{v})} J_{2}(u)
\end{equation*}
and set
\begin{equation*}
\mathcal{M}_{2}(\tilde{v})=\left\{u \in \Gamma_{2}(\tilde{v}) \,|\, J_{2}(u)=c_{2}(\tilde{v})\right\}.
\end{equation*}
Then we have:

\begin{thm}\label{thm:2.72}
$c_2(\tilde{v})=0$ and $\MM_2(\tilde{v})=\{\tilde{v}\}$.
\end{thm}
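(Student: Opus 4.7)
The plan is to establish $c_2(\tilde{v})=0$ and $\MM_2(\tilde{v})=\{\tilde{v}\}$ separately. For the upper bound, note that $\tilde{v}\in\MM_1=\MMM(\bar{a}^1)\cap\Gamma_1(v,w)$ implies $\MT_{\bar{\bd{k}}}\tilde{v}=\tilde{v}$ for every $\bar{\bd{k}}\in\bar{\Lambda}_2$; since $\bar{\Lambda}_2=\{0\}\times\Z^{n-n_2}\times\{0\}$ in the setting of Section \ref{sec:3.2}, $\tilde{v}$ is $1$-periodic in $x_{n_2+1},\ldots,x_n$. Periodicity in $x_{n_2+1}$ gives $J_{2,i}(\tilde{v})=J_1(\tilde{v}|_{S_0})=J_1(\tilde{v})=0$ for every $i\in\Z$, so $J_2(\tilde{v})=0$, and $\tilde{v}\in\Gamma_2(\tilde{v})$ trivially. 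Hence $c_2(\tilde{v})\le 0$.

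For the matching lower bound, I would first rewrite, using the $x_{n_2+1}$-periodicity of $\tilde{v}$,
\begin{equation*}
J_{2;p,q}(u)=\int_{S_p\cup\cdots\cup S_q}[L(u)-L(v)]\,\ud x=\int_{S_p\cup\cdots\cup S_q}[L(u)-L(\tilde{v})]\,\ud x
\end{equation*}
for every $u\in\Gamma_2(\tilde{v})$ and $p\le q$. Given $u\in\Gamma_2(\tilde{v})$ with $J_2(u)<\infty$, Remark \ref{rem:2.70} yields $\|\tau_{-i}^{n_2+1}u-\tilde{v}\|_{W^{1,2}(S_0)}\to 0$ as $|i|\to\infty$. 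I would introduce a cut-off $u_R=\tilde{v}+\eta_R(x_{n_2+1})(u-\tilde{v})$ with $\eta_R$ equal to $1$ on $|x_{n_2+1}|\le R$ and to $0$ on $|x_{n_2+1}|\ge R+1$. Then $u_R-\tilde{v}$ has compact $x_{n_2+1}$-support and, as in Proposition \ref{prop:2.24}, $J_2(u_R)\to J_2(u)$ as $R\to\infty$. After a further truncation in the $\R^{n_2}$ directions (to get genuinely compactly supported perturbations, exploiting the $W^{1,2}$ decay and the $C^1$ bound of Lemma \ref{lem:moser}), Lemma \ref{lem:3.6211} applied with $V_t=\R^{n-n_2-1}$ to the minimal $\tilde{v}$ gives $J_2(u_R)\ge 0$. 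Passing to the limit yields $J_2(u)\ge 0$, so $c_2(\tilde{v})=0$.

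For $\MM_2(\tilde{v})=\{\tilde{v}\}$, let $u\in\MM_2(\tilde{v})$. Adapting the arguments of Proposition \ref{prop:2.2}(\ref{prop:2.2-1}) and Proposition \ref{prop:4.35} with the clip $\max(\min(u+t\phi,w),v)\in\Gamma_2(\tilde{v})$, $u$ is a classical solution of \eqref{eq:PDE}. Suppose for contradiction $u\ne\tilde{v}$. Both $\max(u,\tilde{v})$ and $\min(u,\tilde{v})$ lie in $\Gamma_2(\tilde{v})$ and satisfy $J_2(\max(u,\tilde{v}))+J_2(\min(u,\tilde{v}))=J_2(u)+J_2(\tilde{v})=0$; each summand being $\ge 0$, both vanish, so both are elements of $\MM_2(\tilde{v})$, hence classical solutions. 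Lemma \ref{lem:rab} applied to $\max(u,\tilde{v})\ge\tilde{v}$ and to $\tilde{v}\ge\min(u,\tilde{v})$, together with the connectedness of $\R^n$, forces $u$ to be comparable with $\tilde{v}$ globally, and then $u<\tilde{v}$ strictly everywhere (WLOG). A change of variable $y=x+\bd{e}^{n_2+1}$ combined with the $x_{n_2+1}$-periodicity of $v$ and $F$ shows $J_2$ is $\tau_{-1}^{n_2+1}$-invariant, so $\tau_{-1}^{n_2+1}u\in\MM_2(\tilde{v})$. Applying the same max/min reasoning to the pair $(u,\tau_{-1}^{n_2+1}u)$ shows they are comparable. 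If $u\equiv\tau_{-1}^{n_2+1}u$, then $u$ is $1$-periodic in $x_{n_2+1}$ and the tail condition forces $u=\tilde{v}$, contradicting $u<\tilde{v}$. If $u<\tau_{-1}^{n_2+1}u$ strictly, iteration produces a strictly increasing sequence $(\tau_{-i}^{n_2+1}u)_{i\in\Z}$ having limit $\tilde{v}$ at both $i\to\pm\infty$, which by monotonicity forces the sequence to be identically $\tilde{v}$, a contradiction. The case $u>\tau_{-1}^{n_2+1}u$ yields a strictly decreasing sequence whose pointwise monotone limit equals $\tilde{v}$, so every term lies $\ge\tilde{v}$; in particular $u\ge\tilde{v}$, contradicting $u<\tilde{v}$. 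Thus $u=\tilde{v}$. The main obstacles will be (i) verifying the classical regularity of $u\in\MM_2(\tilde{v})$ via Proposition \ref{prop:4.35}, since the clip $\max(\min(\cdot,w),v)$ must preserve membership in $\Gamma_2(\tilde{v})$ and hypothesis $(Y_2^2)$, and (ii) upgrading the $L^2$ tail decay to pointwise convergence through elliptic regularity so that the monotone translate argument can be closed against $u<\tilde{v}$.
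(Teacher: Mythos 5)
Your proposal is correct and follows essentially the same route as the paper, which, after establishing the clip inequality and invoking Propositions \ref{prop:4.29}--\ref{prop:4.35} to obtain classical regularity of minimizers, delegates both the lower bound $J_2\geq 0$ and the uniqueness $\MM_2(\tilde{v})=\{\tilde{v}\}$ to the corresponding arguments in Rabinowitz--Stredulinsky (\cite[Theorems 2.72, 4.38]{RS}). You supply explicit versions of these two steps: a cutoff-plus-minimality argument (cutting off in $x_{n_2+1}$ and then in $\R^{n_2}$, using the rewriting $J_{2;p,q}(u)=\int[L(u)-L(\tilde{v})]$, which is licensed by $J_1(\tilde{v})=0$ from Remark \ref{rem:kdk}) and a max/min plus translate-monotonicity argument for uniqueness; these are in the spirit of what the cited RS proofs do, so the approaches match rather than diverge.

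Two expository remarks. First, the paper explicitly splits $\tilde{v}\in\MM_1\setminus\{v,w\}$ from $\tilde{v}\in\{v,w\}$ because of the nonstandard two-part definition \eqref{eq:gamma1} of $\Gamma_2(\tilde{v})$ at the endpoints; you never mention case (b), though your max/min step actually handles it more cleanly than the paper's (if $u-\tilde{v}$ has compact support, the dichotomy from Lemma \ref{lem:rab} that $u\gtrless\tilde{v}$ globally already contradicts compact support, with no translate iteration needed). It would be cleaner to note this explicitly, since the clip argument you use to get regularity implicitly relies on $c_2(v)=c_2(w)=0$ on the case-(b) branches. Second, the phrase ``Lemma \ref{lem:3.6211} applied with $V_t=\R^{n-n_2-1}$'' is a slight abuse: $V_t$ is a subspace of $\R^n$, and what is actually needed is the substitution $n_2\to n_2+1$ in Lemma \ref{lem:3.621} as indicated in Remark \ref{rem:lwwwwfkfk}; also, as you anticipate in your obstacle (ii), the monotone-translate step really only needs $L^2$ convergence together with the monotone convergence theorem (monotone in $i$, $L^2$-convergent to $\tilde{v}$ at both ends forces the whole family to equal $\tilde{v}$ a.e.), not elliptic upgrading.
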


\begin{proof}
  Note $J_2(\tilde{v})=0$, so $c_{2}(\tilde{v}) \leq 0$.
If $\tilde{v}\in\MM_1\setminus\{v,w\}$,
similar to \cite[Theorem 4.38]{RS} (see also \cite[Theorem 2.72]{RS}),
we have $J_{2}(u) \geq 0$ for all $u\in\Gamma_2(\tilde{v})$.
Next suppose $\tilde{v}=w$.
If $u\in\Gamma_2(w)$ satisfying (a) of \eqref{eq:gamma1},
the arguments of \cite{RS} still hold and thus $J_2(u)\geq 0$ for all $u\in\Gamma_2(\tilde{v})$.
Assume $u\in\Gamma_2(w)$ satisfying (b) of \eqref{eq:gamma1}.
If $u=w$, $J_2(u)= 0$.
If $u\neq w$,
by Lemma \ref{lem:3.621} and Remark \ref{rem:lwwwwfkfk} (replace $n_2$ by $n_2+1$),
$J_2(u)\geq 0$.
Similarly the case $\tilde{v}=v$ also implies that $J_2(u)\geq 0$ for $u\in \Gamma_2(v)$.
Thus $c_{2}(\tilde{v})=0$.

Next to show 
$\MM_2(\tilde{v})=\{\tilde{v}\}$, again firstly suppose $\tilde{v}\in\MM_1\setminus\{v,w\}$.
Let $u\in\MM_2(\tilde{v})$.
Then $v\leq  u\leq w $.
\begin{itemize}
  \item If $z\in\R^n$ such that $v(z)<  u(z)< w(z) $,
choose $r\in (0, 1/2)$, such that for any $\phi$ smooth with support in $B_r(z)$,
and $|t|$ small (depending on $\phi$) satisfying $v\leq u+t\phi\leq w$.
Hence $J_2 (u+t\phi)\geq c_2(\tilde{v})=0$.
  \item If $z\in\R^n$ such that $u(z)=v(z)$ or $u(z)= w(z)$.
For any fixed $r\in (0, 1/2)$, $\phi$ smooth with support in $B_r(z)$,
and $|t|$ small (depending on $\phi$),
\begin{equation}\label{eq:dgklfd223}
J_2 (u+t\phi)
\geq
J_2 \left( \max\left(\min(u+t\phi, w),v\right) \right).
\end{equation}
Indeed,
\begin{equation*}
  \begin{split}
  &J_2\left(\max\left(\min(u +t\phi, w),v\right)\right)+
  J_2\left(\min\left(\min(u +t\phi, w),v\right)\right)\\
  =&J_2(\min(u +t\phi, w))+J_2(v)\\
  =&J_2(\min(u +t\phi, w)),
  \end{split}
\end{equation*}
and
\begin{equation*}
\begin{split}
&J_2(\max(u +t\phi, w))+J_2(\min(u +t\phi, w))\\
=&J_2(u +t\phi)+J_2(w)\\
=&J_2(u +t\phi).
 \end{split}
 \end{equation*}
Since $\min\left(\min(u +t\phi, w),v\right)$ and $\max(u +t\phi, w)$ satisfy
(b) of \eqref{eq:gamma1} for $\Gamma_2(v)$ and $\Gamma_2(w)$, respectively,
by $c_2(w)=c_2(v)=0$,
\begin{equation*}
  J_2\left(\min\left(\min(u +t\phi, w),v\right)\right) \geq 0,
   \quad J_2(\max(u +t\phi, w)) \geq 0.
\end{equation*}
Since $\max(\min(u+t\phi, w),v )\in \hat{\Gamma}_2(v,w)$,
\eqref{eq:dgklfd223} holds and
$J_2 (u+t\phi)\geq c_2(\tilde{v})=0$.
\end{itemize}
Hence for $u_k=u$,
note that \eqref{eq:4.35-1} of Proposition \ref{prop:4.35} (with $\delta_k=0$) is satisfied.
Consequently, $u$ satisfies \eqref{eq:PDE} for all $z\in\R^n$.
Repeating the proof of \cite[Theorem 2.72]{RS} (see also \cite[Theorem 4.38]{RS}),
the proof of Theorem \ref{thm:2.72} is complete for the case $\tilde{v}\in\MM_1\setminus\{v,w\}$.

Assume $\tilde{v}=w$.
If $u\in\MM_2(w)$ satisfying (a) of \eqref{eq:gamma1},
$v\leq u\leq w$.
Similar to \eqref{eq:dgklfd223},
$0\leq J_2(u+t\phi)$.
So setting $u_k=u$ implies $u$ is a solution of \eqref{eq:PDE}.
Proceeding as for the case of $\tilde{v}\in\MM_1\setminus\{v,w\}$ shows $u=w$.
If $u(\neq w)\in\MM_2(w)$ satisfying (b) of
\eqref{eq:gamma1}, we claim $J_2(u)>0$.
Indeed,
If $J_2(u)=0$,
then for any $z\in\R^n$,
for some $r\in (0,\frac{1}{2})$, all smooth $\phi$ with support in $B_r(z)$,
and associated $t_0(\phi)>0$,
$u+t\phi$ satisfies (b) of \eqref{eq:gamma1}.
Since $c_2(w)=0$,
$J_2(u+t\phi)\geq 0$.
Setting $u_k\equiv u$ implies $u$ is a solution of \eqref{eq:PDE}.
Since $u\neq w$, 
the argument of showing $\MM_1 ^{\bd{l}}$ is an ordered set in 
the proof of Proposition \ref{prop:2.2}
can be applied to show
that $u<w$ or $u>w$.
Both cases contradict $u-w$ has compact support.
So $J_2(u)>0$ and
thus $\MM_2(w)=\{w\}$.
The case of $\tilde{v}=v$ can be proved similarly.
\end{proof}


\subsection{Heteroclinic solution for the case $rank(\bar{\Lambda}_3)=rank(\bar{\Lambda}_2)-1$}\label{sec:4.4}
\

Assume
\begin{equation}\label{eq:*0}
\textrm{$v_1,w_1\in \mathfrak{M}(\bar{a}^1)$ are an adjacent pair}, \tag{$*_1$}
\end{equation}
i.e.,
there does not exist $u\in\mathfrak{M}(\bar{a}^1)$ such that $v_1\leq u\leq w_1$.
%
Then for all $i\in\Z$,
$\tau_{i}^{1}v_1, \tau_{i}^{1}w_1\in\mathfrak{M}(\bar{a}^1)$
are also adjacent.
Thus without loss of generality,
suppose $\tau_{-1}^{1}w_1>\tau_{-1}^{1}v_1\geq w_1>v_1 \geq \tau_{1}^{1}w_1>\tau_{1}^{1}v_1$.

With the above preliminaries,
we obtain the main existence result as in \cite[Theorems 3.2, 4.40]{RS}.
To formulate it,
set $\Gamma_{2}:=\Gamma_{2}\left(v_{1}, w_{1}\right)$ and
\begin{equation}\label{eq:3.1}
c_{2}:=c_{2}\left(v_{1}, w_{1}\right):=\inf _{u \in \Gamma_{2}\left(v_{1}, w_{1}\right)} J_{2}(u).
\end{equation}

\begin{thm}\label{thm:3.2}
If $F$ satisfies \eqref{eq:F1-F2} and \eqref{eq:*0} holds,
then
\begin{enumerate}
  \item \label{thm:3.2-1} There  is  a $U\in \Gamma_2$ such  that $J_2(U)=c_2$,
  i.e., $\MM_2:= \MM_2(v_1,w_1):=\{u\in\Gamma_2(v_1,w_1)\,|\, J_2(u)=c_2\}\neq \emptyset$.
  \item \label{thm:3.2-2} Any $U\in\MM_2$ satisfies
  \begin{enumerate}
    \item \label{thm:3.2-2-a} $U$ is a solution of \eqref{eq:PDE}.
    \item \label{thm:3.2-2-b} $\norm{U-v_1}_{C^2(S_i)}\to 0,  \quad i\to -\infty,$\\
    $\norm{U-w_1}_{C^2(S_i)}\to 0,  \quad i\to \infty,$\\
    i.e., $U$ is heteroclinic in $x_{n_2+1}$ from $v_1$ to $w_1$,
    \item $v_1<U<\tau_{-1}^{n_2+1}U<w_1$, and $U<\MT_{\bar{\bd{k}}}U$
    for $\bar{\bd{k}}\in \bar{\Lambda}_1$ with $\bar{\bd{k}}\cdot \bar{a}^1>0$. \label{thm:3.2-2-c}
  \end{enumerate}
  \item \label{thm:3.2-3} $\MM_2$ is an ordered set.
\end{enumerate}
\end{thm}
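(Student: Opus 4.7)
The plan follows the blueprint of Rabinowitz--Stredulinsky's \cite[Theorems 3.2, 4.40]{RS}, tailored to the renormalized $J_2$ built on top of the $J_1$ developed in Section \ref{sec:3}.

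First I would establish \eqref{thm:3.2-1}. Take a minimizing sequence $(u_k)\subset \Gamma_2(v_1,w_1)$; truncating by $\max(\min(u_k,w_1),v_1)$ (which lies in $\Gamma_2$ and has no larger $J_2$, by the standard sum-of-two-equals-sum identity combined with $J_2(v_1)=J_2(w_1)=0$) I may assume $v_1\le u_k\le w_1$. The class $\Gamma_2$ satisfies $(Y^2_1)$ because the asymptotic $L^2(S_i)$ conditions are insensitive to compactly supported changes, so Proposition \ref{prop:4.29} produces $U\in\hat{\Gamma}_2(v_1,w_1)$ with $u_k-v_1\to U-v_1$ in $W^{1,2}(S_i)$ for every $i\in\Z$. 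Weak lower semicontinuity slice-by-slice (Lemma \ref{lem:4.26}) together with Lemma \ref{lem:4.14} gives $J_{2;p,q}(U)\le \liminf_k J_{2;p,q}(u_k)\le c_2+2K_2$ for every $p\le q$; passing $p\to-\infty,q\to\infty$ and using Proposition \ref{prop:2.24}(\eqref{eq:2.28}) yields $J_2(U)\le c_2$.

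Next I would verify that $U$ actually belongs to $\Gamma_2(v_1,w_1)$, which is the main obstacle. The subtle point is that the asymptotic conditions defining $\Gamma_2$ are not preserved under mere $W^{1,2}(S_i)$ convergence, so $U$ could a priori approach an intermediate element of $\MM_1$ at $\pm\infty$. I would arrange monotonicity $U\le \tau^{n_2+1}_{-1}U$ by a lattice manipulation on $(u_k)$ (replace $u_k$ by $\min(u_k,\tau^{n_2+1}_{-1}u_k)$ etc., which preserves $\Gamma_2$ and does not raise $J_2$) and then invoke Corollary \ref{cor:2.49}: either $U\in\MM_1$, or $U\in\Gamma_2(\phi,\psi)$ for some adjacent $\phi<\psi$ in $\MM_1\cap[v_1,w_1]$. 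But Theorem \ref{thm:ldfjldf} identifies $\MM_1$ with $\MMM(\bar{a}^1)\cap\Gamma_1$, and the adjacency hypothesis $(*_1)$ forces $\MM_1\cap[v_1,w_1]=\{v_1,w_1\}$. Since $U\equiv v_1$ or $U\equiv w_1$ would violate the opposite asymptotic, the only surviving alternative is $\phi=v_1,\psi=w_1$, so $U\in\Gamma_2(v_1,w_1)$ and consequently $J_2(U)=c_2$.

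Item \eqref{thm:3.2-2-a} follows from Proposition \ref{prop:4.35} applied with $u_k\equiv U$: for any smooth $\phi$ supported in some $B_r(z)$ with $r<\tfrac12$ and $|t|\le t_0(\phi)$ small, the cut-and-paste calculation \eqref{eq:dgklfd223} (repeated verbatim in the $J_2$-setting and using $c_2(v_1)=c_2(w_1)=0$ from Theorem \ref{thm:2.72}) gives $J_2(U+t\phi)\ge J_2(\max(\min(U+t\phi,w_1),v_1))\ge c_2$, which is exactly hypothesis $(Y^2_2)$ with $\delta_k=0$; hence $U$ solves \eqref{eq:PDE} on every $B_r(z)$. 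For \eqref{thm:3.2-2-c}, Remark \ref{rem:lsdjfld} gives $\MT_{\bar{\bd{k}}}v_1\ge w_1>U$ for any $\bar{\bd{k}}\in\bar{\Lambda}_1$ with $\bar{\bd{k}}\cdot\bar{a}^1>0$, so $\MT_{\bar{\bd{k}}}U\ge \MT_{\bar{\bd{k}}}v_1\ge w_1\ge U$ and $U\le \tau^{n_2+1}_{-1}U<w_1$ from Step 2; the strong maximum principle Lemma \ref{lem:rab} then upgrades every one of these weak inequalities to a strict one (alternative $U\equiv v_1$ is excluded by the asymptotic at $+\infty$, and similarly for $w_1$). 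Finally \eqref{thm:3.2-2-b}: the $W^{1,2}(S_i)$ asymptotics from Proposition \ref{prop:2.24} (and Remark \ref{rem:2.70}) combined with interior elliptic regularity bootstrapped from $-\Delta(U-v_1)+[F_u(x,U)-F_u(x,v_1)]=0$ promotes the convergence to $C^2(S_i)$.

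For \eqref{thm:3.2-3}, given $U_1,U_2\in\MM_2$ the functions $\max(U_1,U_2),\min(U_1,U_2)\in\Gamma_2$ and $J_2(\max)+J_2(\min)=J_2(U_1)+J_2(U_2)=2c_2$, while each summand is $\ge c_2$; therefore both are minimizers and solve \eqref{eq:PDE} by part \eqref{thm:3.2-2-a}. Since $\max\ge\min$ as solutions of the same elliptic equation, Lemma \ref{lem:rab} yields $\max\equiv\min$ or $\max>\min$ on $\R^n$, i.e. $U_1,U_2$ are comparable. I expect the truly delicate step to be Step 2, where the hypothesis $(*_1)$ together with the monotonisation trick must jointly rule out the minimizer escaping to an intermediate element of $\MM_1$; all other pieces reduce by the cut-and-paste machinery to the corresponding arguments in \cite{RS}.
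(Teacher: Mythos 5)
Your skeleton (cut-and-paste comparisons, Propositions \ref{prop:4.29} and \ref{prop:4.35}, Lemma \ref{lem:4.26}, Corollary \ref{cor:2.49}, the max/min identity plus Lemma \ref{lem:rab} for ordering) is exactly the machinery the paper relies on: its proof of Theorem \ref{thm:3.2} simply defers to \cite[Theorem 4.40]{RS}, adding only that the last assertion of \eqref{thm:3.2-2-c} follows as in Proposition \ref{prop:kdk}, which matches your derivation of $U<\MT_{\bar{\bd{k}}}U$. Parts \eqref{thm:3.2-2-a}, \eqref{thm:3.2-2-b} and \eqref{thm:3.2-3} of your sketch are in order. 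The existence part \eqref{thm:3.2-1}, however, has genuine gaps as written. First, the chain ``$J_{2;p,q}(U)\le\liminf_k J_{2;p,q}(u_k)\le c_2+2K_2$ for all $p\le q$, hence $J_2(U)\le c_2$'' is a non sequitur: letting $p\to-\infty$, $q\to\infty$ only gives $J_2(U)\le c_2+2K_2$. Removing the error $2K_2$ is precisely the delicate tail estimate of the Rabinowitz--Stredulinsky proof (one must show that the contributions $\sum_{i\notin[p,q]}J_{2,i}(u_k)$ are bounded below by a quantity tending to $0$, e.g.\ by splicing $v_1$ and $w_1$ into the tails of a modified minimizing sequence and using $c_2(v_1)=c_2(w_1)=0$ from Theorem \ref{thm:2.72}); this step cannot be skipped.

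Second, nothing in your argument prevents the minimizing sequence from sliding in the $x_{n_2+1}$-direction: $J_2$ and $\Gamma_2(v_1,w_1)$ are invariant under the integer translations $\tau_{-j}^{n_2+1}$, so the weak limit $U$ could perfectly well be $v_1$ or $w_1$ themselves, and your exclusion ``$U\equiv v_1$ or $w_1$ would violate the opposite asymptotic'' is circular, since the weak limit inherits no asymptotics from the $u_k$. The standard remedy is a normalization of the minimizing sequence by these integer translations (as in \cite[(3.5), (4.41)]{RS}; the present paper uses the same device in \eqref{eq:3.511}), which then passes to $U$ through the $W^{1,2}(S_i)$-convergence. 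Third, your entry point into Corollary \ref{cor:2.49} is not justified: replacing $u_k$ by $\min(u_k,\tau_{-1}^{n_2+1}u_k)$ does keep a minimizing sequence, but it does not produce a function satisfying $u\le\tau_{-1}^{n_2+1}u$, and Corollary \ref{cor:2.49} requires exactly this monotonicity; in \cite{RS} the relation $U<\tau_{-1}^{n_2+1}U$ is obtained a posteriori from the ordering of $\MM_2$ and translation invariance, not by monotonizing the minimizing sequence. These three points are the substantive content hidden behind the paper's ``repeat the proof of \cite[Theorem 4.40]{RS}'', so as it stands your proposal does not yet establish part \eqref{thm:3.2-1}.
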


\begin{proof}
After endowed the notations in \cite{RS} with new meanings,
the proof of Theorem \ref{thm:3.2} can be 
obtained by reapting the proof of \cite[Theorem 4.40]{RS} (see also \cite[Theorem 3.2]{RS}),
so we omit it here.
Noting by the proof of Proposition \ref{prop:kdk},
we have 
$U<\MT_{\bar{\bd{k}}}U$
for $\bar{\bd{k}}\in \bar{\Lambda}_1$ with $\bar{\bd{k}}\cdot \bar{a}^1>0$.
\end{proof}

\begin{rem}\label{rem:3.31}
Similar to \cite{RS},
we have 
$$\MM_2(w_1, v_1)=\{u\in \Gamma_2(w_1,v_1)\,|\,J_2(u)=c_2(w_1,v_1)\} \neq \emptyset$$
provided gap condition \eqref{eq:*0}.
The minimum $c_2=c_2(v_1,w_1)$ can be characterized by
$$c_2= \inf_{u\in \mathcal{S}} J_2(u),$$
where 
$$\mathcal{S}=\{u\in \hat{\Gamma}_2(v_1,w_1)\,|\,u\leq \tau_{-1}^{n_2+1}u \textrm{ and $v_1\neq u\neq w_1$}\}.$$
See \cite[Corollary 3.32]{RS} for the proof.
\end{rem}

As in \cite{RS},
the next theorem shows the gap condition \eqref{eq:*0} 
is both necessary and sufficient in order that $\MM_2\neq \emptyset$.
\begin{thm}\label{thm:3.34}
Suppose $F$ satisfies \eqref{eq:F1-F2}, and
$\bar{v}<\bar{w}\in \MMM^{rec}(\bar{a}^1)$ are adjacent.
Assume $v,w\in\MM_1:=\MM_1(\bar{v},\bar{w})$ with $v\neq w$.
Then
$\MM_2(v,w)\neq \emptyset$  if and only if $v$ and $w$ are adjacent members of $\MM_1$.
\end{thm}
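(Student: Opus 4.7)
The proof has two directions. For the \emph{if} direction, suppose $v,w\in\MM_1$ are adjacent. Any $u\in\MMM(\bar{a}^1)$ with $v\leq u\leq w$ satisfies $\bar{v}\leq u\leq\bar{w}$ and $\MT_{\bar{\bd{k}}}u=u$ for every $\bar{\bd{k}}\in\bar{\Lambda}_2$ by Lemma \ref{prop:dk68956}, so $u\in\Gamma_1(\bar{v},\bar{w})$; Proposition \ref{prop:ped} then gives $u\in\MM_1$. Hence adjacency of $v,w$ in $\MM_1$ coincides with their adjacency in $\MMM(\bar{a}^1)$, which is precisely the gap condition \eqref{eq:*0}, and Theorem \ref{thm:3.2} delivers $\MM_2(v,w)\neq\emptyset$.

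For the \emph{only if} direction, assume $v,w$ are not adjacent in $\MM_1$ and pick $\phi\in\MM_1\setminus\{v,w\}$ with $v\leq\phi\leq w$; by Lemma \ref{lem:rab} applied to the classical solutions $v,\phi,w$, these inequalities are strict pointwise. Suppose for contradiction that $U\in\MM_2(v,w)$. The plan is to split $U$ by $\phi$ and force a contradiction via the strong maximum principle. Set $U_1:=\max(U,\phi)$ and $U_2:=\min(U,\phi)$; using the pointwise bound $\max(U-\phi,0)\leq U-v$ (and its dual near $w$) together with the $C^2$-asymptotics from Theorem \ref{thm:3.2}, one checks $U_1\in\Gamma_2(\phi,w)$ and $U_2\in\Gamma_2(v,\phi)$.

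I would then establish three comparisons. (a) Since $(U_1,U_2)$ is a pointwise reordering of $(U,\phi)$, one has $L(U_1)+L(U_2)=L(U)+L(\phi)$ a.e.; integrating over each box and using the superadditivity of $\liminf$,
\[
J_2(U_1)+J_2(U_2)\leq J_2(U)+J_2(\phi)=c_2(v,w),
\]
where $J_2(\phi)=0$ follows from $J_1(\phi)=0$ and the $1$-periodicity of $\phi$ in $x_{n_2+1}$. (b) By definition $J_2(U_1)\geq c_2(\phi,w)$ and $J_2(U_2)\geq c_2(v,\phi)$. (c) A gluing inequality $c_2(v,w)\leq c_2(v,\phi)+c_2(\phi,w)$ follows by concatenating near-minimizers of the two smaller infima through a long central plateau at $\phi$, the transition cost going to zero by the $W^{1,2}$-decay estimates of Proposition \ref{prop:2.24}. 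Combining (a), (b), (c) forces $J_2(U_1)=c_2(\phi,w)$, i.e.\ $U_1\in\MM_2(\phi,w)$; rerunning the variational argument of Propositions \ref{prop:4.29} and \ref{prop:4.35} on $\MY=\Gamma_2(\phi,w)$ then shows that $U_1$ is a classical solution of \eqref{eq:PDE}.

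Finally, since $U$ is heteroclinic in $x_{n_2+1}$ from $v$ to $w$ (Theorem \ref{thm:3.2}) and $v<\phi<w$ pointwise, the continuous function $U-\phi$ takes negative values on any fixed slice for $x_{n_2+1}$ sufficiently negative and positive values for $x_{n_2+1}$ sufficiently positive, so by the intermediate value theorem there exists $x^\ast\in\R^n$ with $U(x^\ast)=\phi(x^\ast)$; then $U_1(x^\ast)=U(x^\ast)$. Applying Lemma \ref{lem:rab} to the smooth solutions $U_1\geq U$ forces $U_1\equiv U$, i.e.\ $U\geq\phi$ everywhere. But then $\|U-v\|^2_{L^2(S_i)}\geq\|\phi-v\|^2_{L^2(S_0)}>0$ uniformly in $i$, contradicting $\|U-v\|_{L^2(S_i)}\to 0$ as $i\to-\infty$. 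The main obstacle will be the gluing step (c) together with verifying that Propositions \ref{prop:4.29} and \ref{prop:4.35} apply to $\Gamma_2(\phi,w)$ when $\phi,w$ need not themselves be adjacent in $\MM_1$; both are technical but follow the template already set up for Theorem \ref{thm:3.2}.
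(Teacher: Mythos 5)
Your \emph{if} half is fine: via Proposition \ref{prop:ped} any $u\in\MMM(\bar a^1)$ squeezed between $v$ and $w$ lies in $\MM_1$, so adjacency in $\MM_1$ coincides with the gap condition \eqref{eq:*0} and Theorem \ref{thm:3.2} applies. The \emph{only if} half, however, contains a circularity as written: you invoke the conclusions of Theorem \ref{thm:3.2} for the putative minimizer $U\in\MM_2(v,w)$ --- the $C^2$ asymptotics, the smoothness of $U$ needed to apply Lemma \ref{lem:rab} to the pair $U_1\ge U$, and the continuity behind your intermediate-value step --- but the hypothesis of Theorem \ref{thm:3.2} is precisely the gap condition for $(v,w)$, which is what you are denying in this direction. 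The $L^2$ asymptotics you actually use are definitional for $\Gamma_2(v,w)$, but the regularity of $U$ is not free. Either re-derive ``every element of $\MM_2(v,w)$ solves \eqref{eq:PDE}'' by the truncation argument in the proof of Theorem \ref{thm:2.72} (which uses only $v,w\in\MM_1$ and $c_2(v)=c_2(w)=0$, not adjacency), or, simpler, finish by comparing $U_1$ with $\phi$ instead of with $U$: if $U<\phi$ on a set of positive measure then $U_1=\phi$ there, and Lemma \ref{lem:rab} applied to the solutions $U_1\ge\phi$ forces $U_1\equiv\phi$, i.e.\ $U\le\phi$, contradicting $\|U-w\|_{L^2(S_i)}\to0$ as $i\to\infty$; otherwise $U\ge\phi$ a.e.\ and you contradict $\|U-v\|_{L^2(S_i)}\to0$ as $i\to-\infty$. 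This removes every appeal to Theorem \ref{thm:3.2} and to pointwise continuity of $U$.

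The second genuine soft spot is that the two load-bearing steps of your scheme --- the gluing inequality (c), $c_2(v,w)\le c_2(v,\phi)+c_2(\phi,w)$, and the transfer of Propositions \ref{prop:4.29}--\ref{prop:4.35} (and of Proposition \ref{prop:2.24}, stated only for $\Gamma_2(v,w)$) to the non-adjacent pairs $(v,\phi)$ and $(\phi,w)$ --- are asserted rather than proved, and in the present paper they are exactly where the new difficulty sits: the slabs $S_i$ are unbounded in the $\R^{n_2}$ directions, so $W^{1,2}(S_i)$-smallness of $u-\phi$ on the gluing slab does not by itself make the $L^1$-type bulk and face terms in $J_{2,i}$ of the interpolant small. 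One must combine it with the uniform tail bound coming from Lemma \ref{lem:bangert11} and the periodicity of $v,w,\phi$ in $x_{n_2+1}$ (the integral of $w-v$ over $S_i\cap\{|\tilde x|>R\}$, and over the faces, is independent of $i$ and tends to $0$ as $R\to\infty$), splitting each slab into $\{|\tilde x|\le R\}$ and its complement. Note also that the omitted proof the paper points to (Rabinowitz--Stredulinsky, Theorem 3.34) has the monotone-class characterization of $c_2$ recorded in Remark \ref{rem:3.31} available, for which the intermediate $\phi$ is itself a competitor and gives $c_2(v,w)\le J_2(\phi)=0$ at once; establishing that characterization in the present setting would let you lighten, though not wholly bypass, your gluing step. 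As it stands the proposal is a reasonable skeleton, but the two items above are genuine gaps that must be filled.
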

\begin{proof}
  The proof is similar to that of \cite[Theorem 3.34]{RS}  
  so we omit it here.
\end{proof}

An interesting question is how the gap condition \eqref{eq:*0} depends on $F$.
For rationally rotation vector, \eqref{eq:*0} continuously depends on $F$ (cf. \cite{RS}),
and if \eqref{eq:*0} does not hold, one can perturb $F$ to regain \eqref{eq:*0}.
But for irrationally rotation vector, things are quite different.
For instance, if the rotation vector $\alpha$ is Diophantine,
a theorem of Moser \cite{moser} which generalizes the famous KAM-Theory
tells us that small perturbation can not break up the foliation.
If the perturbation is large enough, Bangert \cite{bangert1987} showed that
the gap condition can be regained.
See \cite{bangert1987} for more discussion on this topic.

In the rest of this subsection,
we explore the relationship between $\MM_2(v_1,w_1)$ and $\MMM(\bar{a}^1, -\bar{\bd{e}}^{n_2+1})$.
The following lemma of Bangert will be used.
\begin{lem}[{cf. \cite[Proposition 4.2]{Bangert}}]\label{lem:bangertdkd}
Suppose $u\in\MMM(\bar{a}^1,\cdots, \bar{a}^t)$ and $t>1$.
Then there exist $u^{-}$ and $u^{+}$ in $\MMM(\bar{a}^1,\cdots, \bar{a}^{t-1})$ with
the following properties:
\begin{enumerate}
  \item If $\bar{\bd{k}}^i\in\bar{\Lambda}_t$ and $\lim_{i\to\infty}\bar{\bd{k}}^i\cdot \bar{a}^t=\infty$ (resp. $\lim_{i\to\infty}\bar{\bd{k}}^i\cdot \bar{a}^t=-\infty$), then $\lim_{i\to\infty}\MT_{\bar{\bd{k}}^i}u=u^{+}$ (resp. $\lim_{i\to\infty}\MT_{\bar{\bd{k}}^i}u=u^{-}$).
  \item $u^-<u<u^+$ and $\MT_{\bar{\bd{k}}}u^- \geq u^+$ if $\bar{\bd{k}}\in\bar{\Lambda}_s$ and $\bar{\bd{k}}\cdot \bar{a}^s>0$ for some $1\leq s<t$.
  \item If $\lim_{i\to\infty}\MT_{\bar{\bd{k}}^i}u=v$ for a sequence $\bar{\bd{k}}^i\in\bar{\Lambda}_t$ such that $\bar{\bd{k}}^i\cdot \bar{a}^t$ is bounded then $v\in\MMM(\bar{a}^1,\cdots, \bar{a}^t)$.
\end{enumerate}
\end{lem}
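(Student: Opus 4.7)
The plan is to build $u^+$ (and symmetrically $u^-$) as a monotone limit of $\mathcal{T}_{\bar{\bd{k}}^i}u$ as $\bar{\bd{k}}^i\cdot\bar{a}^t\to\infty$ along $\bar{\bd{k}}^i\in\bar{\Lambda}_t$, then verify that this limit has second invariants exactly $\bar{a}^1,\ldots,\bar{a}^{t-1}$. First, I would observe that for $\bar{\bd{k}},\bar{\bd{k}}'\in\bar{\Lambda}_t$ we have $\bar{\bd{k}}-\bar{\bd{k}}'\in\bar{\Lambda}_t$; by Lemma \ref{prop:dk68956} applied to $u$, the sign of $(\bar{\bd{k}}-\bar{\bd{k}}')\cdot\bar{a}^t$ determines the ordering of $\mathcal{T}_{\bar{\bd{k}}}u$ and $\mathcal{T}_{\bar{\bd{k}}'}u$ (with equality precisely on $\bar{\Lambda}_{t+1}$). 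Thus the family $\{\mathcal{T}_{\bar{\bd{k}}}u\,:\,\bar{\bd{k}}\in\bar{\Lambda}_t\}$ is totally ordered by $\bar{\bd{k}}\cdot\bar{a}^t$. Since Moser's gradient bound (Lemma \ref{lem:moser}) and Lemma \ref{lem:dldqwloplo} give compactness in $C^1_{\text{loc}}$, any monotone increasing subfamily with $\bar{\bd{k}}^i\cdot\bar{a}^t\to\infty$ converges (by monotonicity plus compactness) to a single function $u^+$; uniqueness of the limit, independent of the sequence, follows because any two such sequences can be interlaced into a single monotone one.

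Second, I would check $u^+\in\mathfrak{M}(\bar{a}^1,\ldots,\bar{a}^{t-1})$. Minimality and WSI pass to the $C^1_{\text{loc}}$ limit by Lemma \ref{lem:dldqwloplo}. For $1\le s<t$ and $\bar{\bd{k}}\in\bar{\Lambda}_s$ with $\bar{\bd{k}}\cdot\bar{a}^s>0$, Lemma \ref{prop:dk68956} gives $\mathcal{T}_{\bar{\bd{k}}}u>u$; translating by $\bar{\bd{k}}^i$ and passing to the limit yields $\mathcal{T}_{\bar{\bd{k}}}u^+\ge u^+$, and then Lemma \ref{lem:rab} forces either $\mathcal{T}_{\bar{\bd{k}}}u^+>u^+$ or equality. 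Invoking Lemma \ref{lem:dldloplrrtyto} recursively identifies $\bar{a}^s(u^+)=\bar{a}^s$ for each $s\le t-1$. Crucially, for $\bar{\bd{k}}\in\bar{\Lambda}_t$ the uniqueness of the limit in the first step gives $\mathcal{T}_{\bar{\bd{k}}}u^+=u^+$ (since $\bar{\bd{k}}+\bar{\bd{k}}^i\in\bar{\Lambda}_t$ still satisfies $(\bar{\bd{k}}+\bar{\bd{k}}^i)\cdot\bar{a}^t\to\infty$), so $t(u^+)\le t-1$ and $u^+\in\mathfrak{M}(\bar{a}^1,\ldots,\bar{a}^{t-1})$.

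Third, for the gap property $\mathcal{T}_{\bar{\bd{k}}}u^-\ge u^+$ when $\bar{\bd{k}}\in\bar{\Lambda}_s$ with $\bar{\bd{k}}\cdot\bar{a}^s>0$ and $s<t$, I would take two sequences $\bar{\bd{l}}^j,\bar{\bd{m}}^i\in\bar{\Lambda}_t$ with $\bar{\bd{l}}^j\cdot\bar{a}^t\to-\infty$ and $\bar{\bd{m}}^i\cdot\bar{a}^t\to\infty$. Then $\bar{\bd{k}}+\bar{\bd{l}}^j-\bar{\bd{m}}^i\in\bar{\Lambda}_s$ and its $\bar{a}^s$-component equals $\bar{\bd{k}}\cdot\bar{a}^s>0$, so Lemma \ref{prop:dk68956} yields $\mathcal{T}_{\bar{\bd{k}}+\bar{\bd{l}}^j}u>\mathcal{T}_{\bar{\bd{m}}^i}u$; passing to the double limit and applying Lemma \ref{lem:rab} gives $\mathcal{T}_{\bar{\bd{k}}}u^-\ge u^+$. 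The strict inequalities $u^-<u<u^+$ follow by comparing $u$ with a single translate in each direction and then passing to the limit, using Lemma \ref{lem:rab} to upgrade $\le$ to $<$ (strict, since $u\notin\mathfrak{M}(\bar{a}^1,\ldots,\bar{a}^{t-1})$).

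Finally, for item (3), if $\bar{\bd{k}}^i\in\bar{\Lambda}_t$ with $\bar{\bd{k}}^i\cdot\bar{a}^t$ bounded and $\mathcal{T}_{\bar{\bd{k}}^i}u\to v$, the limit $v$ is minimal and WSI. For each $s<t$ the relation $\mathcal{T}_{\bar{\bd{k}}}v\ge v$ (when $\bar{\bd{k}}\in\bar{\Lambda}_s$, $\bar{\bd{k}}\cdot\bar{a}^s>0$) is inherited from $u$, and Lemmas \ref{lem:rab}, \ref{lem:dldloplrrtyto} pin the first $t-1$ invariants of $v$. For the $t$-th invariant, one uses that translates of $v$ by $\bar{\bd{k}}\in\bar{\Lambda}_t$ are limits of $\mathcal{T}_{\bar{\bd{k}}+\bar{\bd{k}}^i}u$, which either equal or are strictly ordered with respect to $v$ according to $\bar{\bd{k}}\cdot\bar{a}^t$; this identifies $\bar{a}^t(v)=\bar{a}^t$. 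I expect the main obstacle to be the uniqueness of $u^+$ (independence from the extracting sequence) and, relatedly, the upgrade from the weak limit inequality $\mathcal{T}_{\bar{\bd{k}}}u^+\ge u^+$ for $\bar{\bd{k}}\in\bar{\Lambda}_t$ to equality; both hinge on fully exploiting the monotonicity inherited from the ordering of the $\bar{\Lambda}_t$-orbit.
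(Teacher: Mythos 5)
Your construction is, in outline, exactly Bangert's own proof of this statement (the paper does not prove the lemma at all; it simply cites \cite[Proposition 4.2]{Bangert}), and most of the steps are sound: the $\bar{\Lambda}_t$-orbit of $u$ is totally ordered by $\bar{\bd{k}}\cdot\bar{a}^t$ by Lemma \ref{prop:dk68956}; the translates stay locally bounded because $\bar{\Lambda}_t\perp\bar{a}^1$ keeps $|\MT_{\bar{\bd{k}}}u(x)-\alpha\cdot x|$ uniformly bounded, so Lemma \ref{lem:dldqwloplo} plus monotonicity gives the limits $u^{\pm}$, their minimality and WSI, and the independence of the extracting sequence; invariance of $u^{\pm}$ under $\bar{\Lambda}_t$ follows from uniqueness of the limit as you say; and your double-limit argument with $\bar{\bd{k}}+\bar{\bd{l}}^j-\bar{\bd{m}}^i$ correctly yields $\MT_{\bar{\bd{k}}}u^-\geq u^+$.

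The genuine gap is in the identification of the second invariants. Lemma \ref{lem:dldloplrrtyto} may only be applied at level $s$ if $s\leq t(u^+)$, and that is precisely what is in doubt: the limit only gives $\MT_{\bar{\bd{k}}}u^+\geq u^+$, and Lemma \ref{lem:rab} leaves open the alternative $\MT_{\bar{\bd{k}}}u^+=u^+$ for some $\bar{\bd{k}}\in\bar{\Lambda}_s$ with $\bar{\bd{k}}\cdot\bar{a}^s>0$, $s<t$; were that to happen, $t(u^+)<t-1$ and $u^+\notin\MMM(\bar{a}^1,\cdots,\bar{a}^{t-1})$, so your ``recursive'' invocation is not yet legitimate (your closing remark flags the wrong obstacle: the equality on $\bar{\Lambda}_t$ is the easy part). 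The equality case can in fact be excluded with ingredients you already have: if $\MT_{\bar{\bd{k}}}u^+=u^+$ as above, then $u<u^+$ gives $\MT_{\bar{\bd{k}}}u\leq\MT_{\bar{\bd{k}}}u^+=u^+$ with strict inequality at some point, while $u\geq u^-$ together with the gap relation gives $\MT_{\bar{\bd{k}}}u\geq\MT_{\bar{\bd{k}}}u^-\geq u^+$, a contradiction; with strictness at every level $s<t$ the symmetry lattice of $u^+$ is exactly $\bar{\Lambda}_t$ and the invariants are pinned down. The same issue recurs, unaddressed, in item (3): you assert $\bar{a}^t(v)=\bar{a}^t$, but must rule out $\MT_{\bar{\bd{k}}}v=v$ for $\bar{\bd{k}}\in\bar{\Lambda}_t$ with $\bar{\bd{k}}\cdot\bar{a}^t>0$ (and similarly at levels $s<t$). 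This needs the additional observation that $v\leq\MT_{\bar{\bd{l}}}u<u^+$ for any $\bar{\bd{l}}\in\bar{\Lambda}_t$ with $\bar{\bd{l}}\cdot\bar{a}^t\geq\sup_i\bar{\bd{k}}^i\cdot\bar{a}^t$ (finite since $\bar{\bd{k}}^i\cdot\bar{a}^t$ is bounded), hence $v\neq u^+$, and symmetrically $v\neq u^-$; only then does the contradiction scheme above (or a diagonal argument showing invariance would force $v=u^{\pm}$) close item (3).
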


\begin{rem}\label{rem:dfdd}
Recall that the limit is taken with respect to $C^1$-convergence on compact sets.
For any $\ell\in\Z$,
$\norm{\MT_{\bar{\bd{k}}^i}u-u^{+}}_{L^2(S_{\ell})}\to 0$
(resp. $\norm{\MT_{\bar{\bd{k}}^i}u-u^{-}}_{L^2(S_{\ell})}\to 0$)
as
$\bar{\bd{k}}^i\in\bar{\Lambda}_t$ and
$\bar{\bd{k}}^i\cdot \bar{a}^t\to \infty$
(resp. $\bar{\bd{k}}^i\cdot \bar{a}^t\to -\infty$).
\end{rem}

\begin{thm}\label{thm:kdfdfdf}
Let F satisfy \eqref{eq:F1-F2}
and assume \eqref{eq:*0} holds.
Suppose
$$\alpha=(\alpha_1,\cdots, \alpha_{n_2},0,\cdots,0)\in\R^n \setminus\Q^n$$
with $\alpha_i\not\in\Q$ for $1\leq i\leq n_2$ and $\alpha_1,\cdots, \alpha_{n_2}$ are linearly independent.
Then
$$\MMM(\bar{a}^1, -\bar{\bd{e}}^{n_2+1})\cap \Gamma_2(v_1,w_1)=\MM_2(v_1,w_1).$$
\end{thm}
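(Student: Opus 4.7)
The plan is to prove the two inclusions separately, using Theorem~\ref{thm:3.2} as structural input on $\MM_2$ and comparing an arbitrary basic heteroclinic in $\MMM(\bar a^1,-\bar{\bd{e}}^{n_2+1})\cap\Gamma_2$ with a distinguished $U^*\in\MM_2$ via a cutoff test function.

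For the inclusion $\MM_2\subseteq\MMM(\bar a^1,-\bar{\bd{e}}^{n_2+1})$, let $U\in\MM_2$. Theorem~\ref{thm:3.2} immediately gives $U\in\Gamma_2$, $U$ solves \eqref{eq:PDE}, $U$ is heteroclinic from $v_1$ to $w_1$, and the strict orderings $v_1<U<\tau_{-1}^{n_2+1}U<w_1$ together with $\MT_{\bar{\bd{k}}}U>U$ for $\bar{\bd{k}}\in\bar{\Lambda}_1$ with $\bar{\bd{k}}\cdot\bar a^1>0$. Iterating $U<\tau_{-1}^{n_2+1}U$ and using $\bar{\Lambda}_3$-periodicity of $U$ extends this to $\MT_{\bar{\bd{k}}}U>U$ for every $\bar{\bd{k}}\in\bar{\Lambda}_2$ with $\bar{\bd{k}}\cdot(-\bar{\bd{e}}^{n_2+1})>0$, so $U$ is WSI. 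For minimality, suppose some compactly supported $\phi$ (periodised in $x_{n_2+2},\ldots,x_n$ if necessary) satisfies $\int_{\R^n}[L(U+\phi)-L(U)]\,\ud x<0$; the truncation $\tilde U:=\max(\min(U+\phi,w_1),v_1)$ lies in $\Gamma_2$, and the pointwise identity $L(\max)+L(\min)=L(\cdot)+L(\cdot)$ combined with $J_2(v_1)=J_2(w_1)=0$ from Theorem~\ref{thm:2.72} and the minimality of $v_1,w_1$ gives $J_2(\tilde U)\le J_2(U+\phi)=J_2(U)+\int[L(U+\phi)-L(U)]\,\ud x<c_2$, contradicting $c_2=\inf_{\Gamma_2}J_2$. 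Finally, Lemma~\ref{lem:dldloplrrtyto}, applied first with $s=1,\bar a=\bar a^1$ and then with $s=2,\bar a=-\bar{\bd{e}}^{n_2+1}$, identifies the second invariants of $U$ as $(\bar a^1,-\bar{\bd{e}}^{n_2+1})$.

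For the reverse inclusion, fix $U\in\MMM(\bar a^1,-\bar{\bd{e}}^{n_2+1})\cap\Gamma_2(v_1,w_1)$ and $U^*\in\MM_2$ from Theorem~\ref{thm:3.2}; since $U\in\Gamma_2$ we already have $J_2(U)\ge c_2$, so the task is $J_2(U)\le c_2$. Choose cutoffs $\rho_P\colon\R^{n_2}\to[0,1]$ and $\sigma_R\colon\R\to[0,1]$ equal to $1$ on $\{|x'|\le P\}$ respectively $\{|x_{n_2+1}|\le R\}$, vanishing past distance one, with uniformly bounded derivatives, and set $\phi_{R,P}(x):=\rho_P(x')\sigma_R(x_{n_2+1})(U^*(x)-U(x))$. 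Then $U+\phi_{R,P}=(1-\rho_P\sigma_R)U+\rho_P\sigma_R U^*$ is a pointwise convex combination of $U$ and $U^*$, so $U+\phi_{R,P}\in\hat\Gamma_2$, while $\phi_{R,P}$ has compact support in $\R^{n_2+1}$ and is $\T^{n-n_2-1}$-periodic. Lemma~\ref{lem:3.6211} upgrades the minimality of $U$ on $\R^n$ to
\[
\int_{\R^{n_2+1}\times\T^{n-n_2-1}}\bigl[L(U+\phi_{R,P})-L(U)\bigr]\,\ud x\ge 0.
\]
Split this into an interior piece $A_{P,R}$ over $[-P,P]^{n_2}\times[-R,R]\times\T^{n-n_2-1}$, on which $\phi_{R,P}=U^*-U$ and the integrand equals $[L(U^*)-L(v_1)]-[L(U)-L(v_1)]$, and an annular transition piece $B_{P,R}$. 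Letting $P\to\infty$ and then $R\to\infty$, Proposition~\ref{prop:2.24} applied to $U^*$ together with the definition of $J_2$ gives $A_{P,R}\to c_2-J_2(U)$, so $J_2(U)\le c_2$ reduces to showing $B_{P,R}\to 0$.

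The main obstacle is the estimate $B_{P,R}\to 0$. On the $x_{n_2+1}$-annulus $\{R\le|x_{n_2+1}|\le R+1\}$, integrating the linear-in-$\phi$ terms by parts using $\Delta U=F_u(\cdot,U)$ reduces the integrand modulo quadratic errors to $\tfrac12|\nabla\phi_{R,P}|^2+O(\phi_{R,P}^2)$, which is bounded over the annulus by a constant times $\|\nabla(U^*-U)\|_{L^2(S_{\pm R})}^2+\|U^*-U\|_{L^2(S_{\pm R})}^2$; this vanishes as $R\to\infty$ uniformly in $P$ by Proposition~\ref{prop:2.24} for $U^*$ and by an interior Caccioppoli upgrade of the $\Gamma_2$ definition for $U$. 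On the $\R^{n_2}$-annulus $\{P\le|x'|\le P+1\}$, where no heteroclinic decay is available and which is the genuinely new ingredient compared with \cite{RS}, Lemma~\ref{lem:bangert11} in both its $L^1$ and $L^2$ forms yields $\int_{|x'|>P,\,\T^{n-n_2}}(w_1-v_1)^p\,\ud x\to 0$ for $p=1,2$; combined with $|\phi_{R,P}|\le w_1-v_1$ and bounded $|\nabla\rho_P|$ this handles the $\phi_{R,P}^2$ and $|\nabla\rho_P|^2(U^*-U)^2$ contributions, while a Caccioppoli estimate majorising $\|\nabla(U-v_1)\|_{L^2(\text{unit cube})}^2$ by $\|w_1-v_1\|_{L^2(\text{enlarged cube})}^2$ (and similarly for $U^*$), summed over a cover of the $\R^{n_2}$-annulus, controls the remaining $\rho_P^2|\nabla(U^*-U)|^2$ contribution. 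This produces $B_{P,R}\to 0$ without the a priori assumption $J_2(U)<\infty$, whence $J_2(U)=c_2$ and $U\in\MM_2$.
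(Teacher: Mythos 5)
Your forward inclusion ($\MM_2\subseteq\MMM(\bar a^1,-\bar{\bd e}^{n_2+1})\cap\Gamma_2$) is essentially the paper's argument: WSI from Theorem \ref{thm:3.2}(c) plus $\bar\Lambda_3$-periodicity, minimality by the truncation--comparison contradiction (the paper invokes the argument of Proposition \ref{prop:ped}, you run its $J_2$-analogue using $c_2(v_1)=c_2(w_1)=0$ from Theorem \ref{thm:2.72}), and identification of the invariants via Lemma \ref{lem:dldloplrrtyto} (the paper uses Lemma \ref{prop:dk68956}). For the reverse inclusion, however, you take a genuinely different route. The paper starts from an arbitrary $u\in\MMM(\bar a^1,-\bar{\bd e}^{n_2+1})$, produces its asymptotes $u^\pm$ by Bangert's Lemma \ref{lem:bangertdkd} and Remark \ref{rem:dfdd}, cites \cite[(3.76)]{RS} for the equality $J_2(u)=c_2(u^-,u^+)$, and only then pins down $(u^-,u^+)=(v_1,w_1)$ through Theorem \ref{thm:3.34} and the gap condition \eqref{eq:*0}; this stronger formulation is what feeds Corollary \ref{cor:dfdfd}. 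You instead exploit the hypothesis $U\in\Gamma_2(v_1,w_1)$ directly and prove $J_2(U)\le c_2$ by an explicit two-parameter cutoff comparison with a minimizer $U^*$, supplying yourself the adaptation of the Rabinowitz--Stredulinsky comparison to the unbounded $\R^{n_2}$ cross-section: tails via both forms of Lemma \ref{lem:bangert11}, a Caccioppoli upgrade of the $L^2(S_i)$ decay of $U$, and the iterated limit $P\to\infty$, $R\to\infty$ matching the definition of $J_2$. What the paper's route buys is the unconditional statement about all of $\MMM(\bar a^1,-\bar{\bd e}^{n_2+1})$ at the cost of outsourcing the analytic core to \cite{RS}; what yours buys is a self-contained proof of exactly the stated identity, making explicit the estimates that are new relative to the compact cross-section setting of \cite{RS}.

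One point in your transition estimate deserves more care than your sketch gives it: the term linear in $\phi_{R,P}$ with coefficient $\nabla U$ is not controlled by the $L^1$/$L^2$ tails of $w_1-v_1$ alone, because the transition regions have volume growing in $P$ and $R$; after integrating it by parts (using $\Delta U=F_u(x,U)$ and Lemma \ref{lem:moser}) you must also account for the interface surface integrals at $\{|x'|=P\}$ and $\{|x_{n_2+1}|=R\}$, which your bulk bounds do not cover. These are handled, e.g., by choosing good radii and slice heights via the integrability of $w_1-v_1$ (so that the slice integrals tend to zero along a subsequence, which suffices for the inequality $J_2(U)\le c_2$) and by trace plus dominated convergence in the heteroclinic direction. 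This is a fixable omission of detail rather than a flaw in the approach.
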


\begin{proof}
For $u\in\MM_2(v_1,w_1)$,
by \eqref{thm:3.2-2-c} of Theorem \ref{thm:3.2} and the periodicity of $u$,
$u$ is WSI.
Proceeding as in the proof of Proposition \ref{prop:ped} 
shows that $u$ is minimal.
By Lemma \ref{prop:dk68956},
$u\in\MMM(\bar{a}^1, -\bar{\bd{e}}^{n_2+1}) $.

Next suppose $u\in\MMM(\bar{a}^1, -\bar{\bd{e}}^{n_2+1}) $.
By Lemma \ref{lem:bangertdkd},
there exist $v$, $w\in \MMM(\bar{a}^1)$ with $v<w$ satisfying:
if $\bar{\bd{k}}\in\bar{\Lambda}_1$ and $\bar{\bd{k}}\cdot \bar{a}^1>0$,
then
$\MT_{\bar{\bd{k}}}v \geq w$.
So $\MM_1$, $J_1$, $J_2$, etc. are well-defined.
By Remark \ref{rem:dfdd},
$u \in \Gamma_{2}(v, w)$.
Next it will be shown that
\begin{equation}\label{eq:3.76}
J_{2}(u)=c_{2}(v, w)=:c_2,
\end{equation}
so
$u\in\MM_2(v,w)$.
But \eqref{eq:3.76} can be proved as \cite[(3.76)]{RS}.
By Theorem \ref{thm:3.34},
$v, w$ are adjacent in $\MMM(\bar{a}^1)$.
Thus by \eqref{eq:*0},
$v=v_1, w=w_1$.
Hence $u\in \MM_2(v_1,w_1)$,
i.e.,
$\MMM(\bar{a}^1, -\bar{\bd{e}}^{n_2+1})\cap\hat{\Gamma}_2(v_1,w_1)\subset \MM_2(v_1,w_1)$.
\end{proof}

\begin{cor}\label{cor:dfdfd}
Suppose
$$\alpha=(\alpha_1,\cdots, \alpha_{n_2},0,\cdots,0)\in\R^n \setminus\Q^n$$
with $\alpha_i\not\in\Q$ for $1\leq i\leq n_2$ and $\alpha_1,\cdots, \alpha_{n_2}$ are linearly independent.
Then
$$\MMM(\bar{a}^1, -\bar{\bd{e}}^{n_2+1})=\cup_{\langle v_1,w_1 \rangle\in \mathcal{A}_1}\MM_2(v_1,w_1)$$
with
$$\mathcal{A}_1=\{\langle v_1,w_1 \rangle\,|\, v_1<w_1\in \MMM(\bar{a}^1) \textrm{ are an adjacent pair} \}.$$
\end{cor}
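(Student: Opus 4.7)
The plan is to deduce the corollary as a packaging of Theorem \ref{thm:kdfdfdf}. We need two inclusions. The inclusion $\supset$ is essentially immediate: for any adjacent pair $\langle v_1,w_1\rangle\in\mathcal{A}_1$, the gap condition \eqref{eq:*0} holds by definition, so Theorem \ref{thm:kdfdfdf} applies and gives $\MM_2(v_1,w_1)=\MMM(\bar{a}^1,-\bar{\bd{e}}^{n_2+1})\cap\Gamma_2(v_1,w_1)\subset \MMM(\bar{a}^1,-\bar{\bd{e}}^{n_2+1})$. Taking the union over $\mathcal{A}_1$ yields the easy direction.

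For the opposite inclusion $\subset$, I take any $u\in\MMM(\bar{a}^1,-\bar{\bd{e}}^{n_2+1})$ and must exhibit an adjacent pair $\langle v_1,w_1\rangle\in\mathcal{A}_1$ with $u\in\MM_2(v_1,w_1)$. Applying Lemma \ref{lem:bangertdkd} with $t=2$ produces asymptotic limits $u^-,u^+\in\MMM(\bar{a}^1)$ satisfying $u^-<u<u^+$, together with the relation $\MT_{\bar{\bd{k}}}u^-\geq u^+$ whenever $\bar{\bd{k}}\in\bar{\Lambda}_1$ with $\bar{\bd{k}}\cdot\bar{a}^1>0$. By Remark \ref{rem:dfdd} the convergence is strong enough in $L^2(S_\ell)$ on each slab to conclude $u\in\Gamma_2(u^-,u^+)$. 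Thus $J_1$ and $J_2$ are well-defined relative to $(u^-,u^+)$, and the same computation used to prove \eqref{eq:3.76} in Theorem \ref{thm:kdfdfdf} (following \cite[(3.76)]{RS}) shows $J_2(u)=c_2(u^-,u^+)$, i.e.\ $u\in\MM_2(u^-,u^+)$.

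It remains to check $\langle u^-,u^+\rangle\in\mathcal{A}_1$, that is, that $u^-$ and $u^+$ are adjacent in $\MMM(\bar{a}^1)$. Since we have just shown $\MM_2(u^-,u^+)\neq\emptyset$, Theorem \ref{thm:3.34} forces $u^-,u^+$ to be adjacent members (in fact, of $\MM_1(\bar{v},\bar{w})$ for the ambient $\bar{v}<\bar{w}\in\MMM^{rec}(\bar{a}^1)$, but in the present setting $\MMM(\bar{a}^1)$ is the relevant totally ordered set, and Theorem \ref{thm:3.34} yields adjacency there). Consequently $u\in\MM_2(v_1,w_1)$ for the adjacent pair $(v_1,w_1):=(u^-,u^+)\in\mathcal{A}_1$, completing the inclusion.

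The real content is thus entirely in Theorem \ref{thm:kdfdfdf} and Theorem \ref{thm:3.34}; the corollary is just an index-free restatement. The only place where one must be slightly careful is verifying $u\in\Gamma_2(u^-,u^+)$ from Lemma \ref{lem:bangertdkd}: Bangert's convergence is $C^1$ on compact sets, whereas $\Gamma_2$ is defined via $L^2$-convergence on unit slabs $S_i$, so I would invoke Remark \ref{rem:dfdd} explicitly to bridge the two notions. Beyond this, no new analysis is needed and the proof is a direct assembly of results already established.
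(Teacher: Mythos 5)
Your argument is correct and is essentially the paper's own proof: the inclusion $\supset$ follows directly from Theorem \ref{thm:kdfdfdf}, and the inclusion $\subset$ repeats the second half of the proof of Theorem \ref{thm:kdfdfdf} (Lemma \ref{lem:bangertdkd} plus Remark \ref{rem:dfdd} to get $u\in\Gamma_2(u^-,u^+)$, the \eqref{eq:3.76} computation to get $u\in\MM_2(u^-,u^+)$, and Theorem \ref{thm:3.34} for adjacency). Your extra care about bridging $C^1$-convergence on compacta with the $L^2(S_i)$ definition of $\Gamma_2$ via Remark \ref{rem:dfdd} matches what the paper intends, so no further changes are needed.
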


\begin{proof}
By Theorem \ref{thm:kdfdfdf},
$$\MMM(\bar{a}^1, -\bar{\bd{e}}^{n_2+1})\supset\cup_{\langle v_1,w_1 \rangle\in \mathcal{A}_1}\MM_2(v_1,w_1).$$

For any $u\in \MMM(\bar{a}^1, -\bar{\bd{e}}^{n_2+1})$,
by the proof of Theorem \ref{thm:kdfdfdf},
$u\in\MM_2(u^-,u^+)$,
where
$u^-<u^+$ are given by Lemma \ref{lem:bangertdkd},
and $u^-,u^+$ are adjacent in $\MMM(\bar{a}^1)$.
This proves $\MMM(\bar{a}^1, -\bar{\bd{e}}^{n_2+1})\subset \cup_{\langle v_1,w_1 \rangle\in \mathcal{A}_1}\MM_2(v_1,w_1).$
\end{proof}


\subsection{The second renormalized  functional for the case $1\leq rank(\bar{\Lambda}_3)\leq rank(\bar{\Lambda}_2)-2$}\label{sec:4.3}
\

The case $1\leq rank(\bar{\Lambda}_3)\leq rank(\bar{\Lambda}_2)-2$ 
will be treated as in Section \ref{sec:3.2}.
First we construct a minimal and WSI solution $u\in\MMM(\bar{a}^1,\bar{a}^2)$.
The proof is a combination of the methods of Moser \cite{moser}, Bangert \cite{Bangert} and Rabinowitz--Stredulinsky \cite{RS}.

\begin{thm}\label{thm:dklkkff}
Suppose \eqref{eq:*0} holds. Then
$\MMM(\bar{a}^1,\bar{a}^2)\neq \emptyset.$
\end{thm}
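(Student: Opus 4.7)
The plan is to imitate Moser's approximation argument from Step 1(II) of the introduction: approximate $\bar{a}^2$ by unit vectors $\bar{a}^2_m$ for which the case already treated in Section \ref{sec:4.4} produces heteroclinic solutions $u_m\in\MMM(\bar{a}^1,\bar{a}^2_m)$, and then extract a $C^1_{loc}$-limit. Let $W:=span(\bar{\Lambda}_2)\cap\bar{\Lambda}_3^{\perp}$. By hypothesis $\dim W = rank(\bar{\Lambda}_2)-rank(\bar{\Lambda}_3)\geq 2$, and $\bar{a}^2\in W$. Because $V_2$ and $V_3^{\perp}$ are rational subspaces of $\R^{n+1}$, so is $W$, and hence $\bar{\Lambda}_2\cap W=\Z^{n+1}\cap W$ is a lattice of full rank $\dim W$ in $W$. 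Thus I may choose $\bar{\bd{k}}_m\in(\bar{\Lambda}_2\cap\bar{\Lambda}_3^{\perp})\setminus\{\bd{0}\}$ with $\bar{a}^2_m:=\bar{\bd{k}}_m/|\bar{\bd{k}}_m|\to\bar{a}^2$; then $(\bar{a}^1,\bar{a}^2_m)$ is admissible, $rank(\bar{\Lambda}_3^m)=rank(\bar{\Lambda}_2)-1$, and $\bar{\Lambda}_3\subset\bar{\Lambda}_3^m$ for every $m$.

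Since the adjacency of $v_1<w_1$ in $\MMM(\bar{a}^1)$ is independent of $\bar{a}^2$, the gap condition \eqref{eq:*0} holds for each pair $(\bar{a}^1,\bar{a}^2_m)$. After a linear change of coordinates carrying $\bar{a}^2_m$ to $-\bar{\bd{e}}^{n_2+1}$ (exactly as in Section \ref{sec:3.2}), Theorem \ref{thm:3.2} together with Theorem \ref{thm:kdfdfdf} yields $u_m\in\MMM(\bar{a}^1,\bar{a}^2_m)$ with $v_1<u_m<w_1$. Because $\bar{\Lambda}_3\subset\bar{\Lambda}_3^m$, Lemma \ref{prop:dk68956}(2) gives $\MT_{\bar{\bd{k}}}u_m=u_m$ for every $\bar{\bd{k}}\in\bar{\Lambda}_3$. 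Picking $\bar{\bd{j}}_m\in\bar{\Lambda}_2$ with $\bar{\bd{j}}_m\cdot\bar{a}^2_m\neq 0$ (possible because $rank(\bar{\Lambda}_3^m)<rank(\bar{\Lambda}_2)$) and noting that $\MT_{\bar{\bd{j}}_m}$ fixes $v_1,w_1$ while strictly shifting $u_m$, I iterate this translation to arrange the normalization
\begin{equation*}
v_1(\bd{0})+\tfrac{1}{3}\bigl(w_1(\bd{0})-v_1(\bd{0})\bigr)\leq u_m(\bd{0})\leq v_1(\bd{0})+\tfrac{2}{3}\bigl(w_1(\bd{0})-v_1(\bd{0})\bigr).
\end{equation*}

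By Lemma \ref{lem:dldqwloplo}(1), a subsequence $u_m\to u$ in $C^1_{loc}(\R^n)$ to a minimal and WSI solution $u$ of \eqref{eq:PDE} satisfying $v_1\leq u\leq w_1$, $\MT_{\bar{\bd{k}}}u=u$ for $\bar{\bd{k}}\in\bar{\Lambda}_3$, and the normalization at $\bd{0}$. Lemma \ref{lem:dldqwloplo}(2) yields $\bar{a}^1(u)=\bar{a}^1$, so $\bar{\Lambda}_2(u)=\bar{\Lambda}_2$. For any $\bar{\bd{k}}\in\bar{\Lambda}_2$ with $\bar{\bd{k}}\cdot\bar{a}^2>0$, eventually also $\bar{\bd{k}}\cdot\bar{a}^2_m>0$; Lemma \ref{prop:dk68956}(1) then gives $\MT_{\bar{\bd{k}}}u_m>u_m$, which passes to $\MT_{\bar{\bd{k}}}u\geq u$ in the limit.

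The hardest step, which I regard as the main obstacle, is to show $t(u)\geq 2$; granted this, Lemma \ref{lem:dldloplrrtyto} applied with $s=2$ forces $\bar{a}^2(u)=\bar{a}^2$, and $u\in\MMM(\bar{a}^1,\bar{a}^2)$ follows. Suppose for contradiction $t(u)=1$; then $u\in\MMM(\bar{a}^1)$. Applying Lemma \ref{lem:rab} separately to the solutions $u\leq w_1$ and $v_1\leq u$, each comparison is either strict everywhere or an identity. The normalization at $\bd{0}$ excludes $u\equiv v_1$ and $u\equiv w_1$, leaving $v_1<u<w_1$ strictly, which contradicts the adjacency of $v_1<w_1$ in $\MMM(\bar{a}^1)$ assumed by \eqref{eq:*0}. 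Hence $t(u)\geq 2$ and the proof is complete.
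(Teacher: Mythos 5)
Your overall strategy is exactly the paper's: approximate $\bar{a}^2$ by rational directions $\bar{a}^2_m$ inside $span(\bar{\Lambda}_2)\cap \bar{\Lambda}_3^{\perp}$ (so that $(\bar{a}^1,\bar{a}^2_m)$ is admissible, $rank(\bar{\Lambda}_3^m)=rank(\bar{\Lambda}_2)-1$ and $\bar{\Lambda}_3\subset\bar{\Lambda}_3^m$), obtain $u_m\in\MMM(\bar{a}^1,\bar{a}^2_m)$ between $v_1$ and $w_1$ from Theorem \ref{thm:3.2} (with Theorem \ref{thm:kdfdfdf}), pass to a $C^1_{loc}$ limit, and identify the invariants; your use of Lemma \ref{lem:dldloplrrtyto} with $s=2$ in place of Bangert's separating-vector argument for $\bar{a}^2(u)=\bar{a}^2$ is a harmless variant. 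The genuine gap is your normalization. You claim that by iterating the translation $\MT_{\bar{\bd{j}}_m}$ you can force $u_m(\bd{0})$ into the middle third of $[v_1(\bd{0}),w_1(\bd{0})]$. But the orbit $\{\MT_{\ell\bar{\bd{j}}_m}u_m(\bd{0})\}_{\ell\in\Z}$ is only a strictly monotone sequence with limits $v_1(\bd{0})$ and $w_1(\bd{0})$; nothing controls the size of an individual step, and the heteroclinic transition can occur essentially within a single lattice translation, so the orbit may jump over the middle third entirely. Since this normalization is the sole mechanism preventing the limit $u$ from degenerating to $v_1$ or $w_1$ --- the crux of any such approximation argument --- the proof as written is incomplete at its decisive point.

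The repair is the standard two-sided (crossing) normalization, which is what the paper uses in \eqref{eq:3.511}: translate $u_k$ so that its average over a cube shifted by one lattice step lies below $\tfrac12\int(v_1+w_1)$ while its average over the unshifted cube lies above it; such a translate always exists because consecutive translates differ by exactly one step, both inequalities survive the $C^1_{loc}$ limit, and together they exclude $u=v_1$ and $u=w_1$ (using the strict inequality $v_1<w_1$). In your pointwise setting one can equally fix a single $\bar{\bd{j}}\in\bar{\Lambda}_2$ with $\bar{\bd{j}}\cdot\bar{a}^2\neq 0$ (hence $\bar{\bd{j}}\cdot\bar{a}^2_m\neq 0$ for large $m$) and normalize so that $u_m(\bd{0})\leq\tfrac12\bigl(v_1(\bd{0})+w_1(\bd{0})\bigr)\leq\MT_{\bar{\bd{j}}}u_m(\bd{0})$; since $\MT_{\bar{\bd{j}}}$ fixes $v_1$ and $w_1$, the limiting inequalities again rule out $u\in\{v_1,w_1\}$. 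With a normalization of this type, your remaining steps (passage of $\MT_{\bar{\bd{k}}}u\geq u$ to the limit, exclusion of $t(u)=1$ by adjacency, and the exclusion of $t(u)\geq 3$, which you leave implicit but which follows from $\MT_{\bar{\bd{k}}}u=u$ for $\bar{\bd{k}}\in\bar{\Lambda}_3$ together with Lemma \ref{prop:dk68956}) go through as in the paper.
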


\begin{proof}
Choose $\bar{a}^2(k)$ such that (cf. \cite[the proof of (6.5)]{moser})
\begin{enumerate}
  \item $(\bar{a}^1,\bar{a}^2(k))$ is admissible for all $k\in\N$;
  \item $rank(\bar{\Lambda}_3 (k))=rank(\bar{\Lambda}_2) -1$ with $\bar{\Lambda}_3(k):=\Z^{n+1}\cap \langle \bar{a}^1, \bar{a}^2(k)\rangle ^{\perp}$;
  \item  $\bar{\Lambda}_3\subset \bar{\Lambda}_3(k)$; \label{eq:dllgdf}
  \item $\bar{a}^2(k)\to \bar{a}^2$ as $k\to \infty$.
\end{enumerate}
By Theorem \ref{thm:3.2}, there are $u_k\in\MMM(\bar{a}^1,\bar{a}^2(k))$ such that $v_1\leq u_k \leq w_1$.
Similar to \cite[(3.5) or (4.41)]{RS},
we can normalize $u_k$ as follows.
Without loss of generality,
assume $rank(\Lambda_3)=n-n_2-n_3$,
and $\bd{e}^1, \cdots, \bd{e}^{n_2}$ are perpendicular to $\Lambda_2$,
and $\bd{e}^{n_2+1}, \cdots, \bd{e}^{n_2+n_3}$ are perpendicular to $\Lambda_3$ in the subspace $V_2$.
Thus by the asymptotic behavior of $u_k$,
\begin{equation}\label{eq:3.511}
\begin{split}
\int_{[0,1]^{n_2}\times [-1,0]^{n_3}\times \T^{n-n_2-n_3}} u_k \ud x
\leq &\frac{1}{2} \int_{[0,1]^{n_2+n_3}\times \T^{n-n_2-n_3}}\left(v_{1}+w_{1}\right) \ud x\\
\leq &\int_{[0,1]^{n_2+n_3}\times \T^{n-n_2-n_3}} u_k \ud x.
\end{split}
\end{equation}
By Lemma \ref{lem:dldqwloplo},
there is a minimal and WSI solution $u$ such that
$u_k\to u$
with respect to $C^1$-convergence on compact sets
as $k\to \infty$.
Thus
\begin{equation}\label{eq:3.52211}
\begin{split}
\int_{[0,1]^{n_2}\times [-1,0]^{n_3}\times \T^{n-n_2-n_3}} u \ud x
\leq &\frac{1}{2} \int_{[0,1]^{n_2+n_3}\times \T^{n-n_2-n_3}}\left(v_{1}+w_{1}\right) \ud x\\
\leq &\int_{[0,1]^{n_2+n_3}\times \T^{n-n_2-n_3}} u \ud x.
\end{split}
\end{equation}
So $u\neq v_1$ and $w_1$.

Since $\bar{a}^1 (u_k)\equiv \bar{a}^1$, by Lemma \ref{lem:dldqwloplo},
$\bar{a}^1 (u)= \bar{a}^1$.
By \eqref{eq:dllgdf}, $u$ satisfies $\MT_{\bar{\bd{k}}}u=u$ for $\bar{\bd{k}}\in\bar{\Lambda}_3$.
We claim:
\begin{equation}\label{eq:dlldldd}
  \textrm{$t(u)=2$ and $\bar{a}^2 (u)= \bar{a}^2$.}
\end{equation}
If $t(u)=1$,
$v_1\leq u \leq w_1$, since $v_1\leq u_k \leq w_1$.
Note $v_1$ and $w_1$ are adjacent, then $u=v_1$ or $w_1$, a contradiction.
Thus $t(u)=2$.
To prove \eqref{eq:dlldldd},
we use Bangert's argument (\cite[the proof of Lemma 3.10]{Bangert}).
If $\bar{a}^2 (u)\neq \bar{a}^2$,
choose $\bar{\bd{k}}\in\Z^{n+1}\setminus \{0\}$
such that
$\bar{\bd{k}}\cdot\bar{a}^2(u)>0$ and $\bar{\bd{k}}\cdot\bar{a}^2(u_k)<0$ for large $k$.
Then
$\MT_{\bar{\bd{k}}}u>u$ and $\MT_{\bar{\bd{k}}}u_k< u_k$ for large $k\in\N$.
This contradicts $u_k\to u$ on compact set.
Thus \eqref{eq:dlldldd} is proved and
$u\in\MMM(\bar{a}^1,\bar{a}^2)$.
\end{proof}

With Theorem \ref{thm:dklkkff} in hand,
proceeding as in Section \ref{sec:3.2},
by Lemma \ref{lem:659712},
either
$\MMM^{rec}(\bar{a}^1,\bar{a}^2)$ foliates the gap between $v_1$ and $w_1$,
or
$\MMM^{rec}(\bar{a}^1,\bar{a}^2)$ constitutes a lamination of $(v_1, w_1):=\{(x,x_{n+1}) \,|\, v_1(x)<x_{n+1}<w_1(x)\}$.
In the latter case,
$$\{u(\bd{0})\,|\, u\in \MMM^{rec}(\bar{a}^1,\bar{a}^2)\cap \Gamma_2(v_1, w_1)\}(\subset \{u(\bd{0})\,|\, u\in \MMM(\bar{a}^1,\bar{a}^2)\cap \Gamma_2(v_1, w_1)\})$$ is a Cantor set.
If $\{u(\bd{0})\,|\, u\in \MMM^{rec}(\bar{a}^1,\bar{a}^2)\cap \Gamma_2(v_1, w_1)\}$ is a Cantor set,
we can construct the functional $J_1$ as in Section \ref{sec:3}
and obtain the variational construction of $u\in \MMM(\bar{a}^1,\bar{a}^2)\setminus \MMM^{rec}(\bar{a}^1,\bar{a}^2)$.
That is (similar to Theorem \ref{thm:ldfjldf}):
\begin{thm}\label{thm:dlfjldfjldgf}
 $\MMM(\bar{a}^1,\bar{a}^2)=\cup_{\langle v,w\rangle \in  \mathcal{A}_2} \MM_1(v,w) \cup \MMM^{rec}(\bar{a}^1,\bar{a}^2)$,
with
\begin{equation*}
\mathcal{A}_2:=\{\langle v,w\rangle\,|\,
  \textrm{$v<w\in\MMM^{rec}(\bar{a}^1,\bar{a}^2)$ are adjacent}\}.
\end{equation*}
Note here $\MM_1(v,w)$ is corresponding to the new functional $J_1$.
\end{thm}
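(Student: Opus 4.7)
The proof follows the template of Section \ref{sec:3.2} applied one level deeper, replacing the single invariant $\bar{a}^1$ by the pair $(\bar{a}^1,\bar{a}^2)$ and the periodicity lattice $\bar{\Lambda}_2$ by $\bar{\Lambda}_3$. The plan is to carry out the same minimization program on the gap between an adjacent pair in $\MMM^{rec}(\bar{a}^1,\bar{a}^2)$ and then splice this local description with the elements of $\MMM^{rec}(\bar{a}^1,\bar{a}^2)$ themselves.

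First I would settle the easy inclusion. Elements of $\MMM^{rec}(\bar{a}^1,\bar{a}^2)$ lie in $\MMM(\bar{a}^1,\bar{a}^2)$ by definition \eqref{eq:133434134}. For each adjacent pair $\langle v,w\rangle\in\mathcal{A}_2$, the elements of $\MM_1(v,w)$, defined analogously to \eqref{eq:mm1} with respect to the new functional, will turn out to be minimal and WSI classical solutions. Membership in $\MMM(\bar{a}^1,\bar{a}^2)$ then follows from Lemma \ref{lem:dldloplrrtyto} and Lemma \ref{prop:dk68956}, exactly as in the reasoning for Proposition \ref{prop:ped}, once the WSI property is harvested from $v\le u\le w$, the adjacency of $v,w$ and Lemma \ref{lem:rab} (the verbatim analogue of Proposition \ref{prop:kdk}).

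The heart of the proof is constructing the new $J_1$ and establishing the analogue of Proposition \ref{prop:ped}. For an adjacent pair $v<w\in\MMM^{rec}(\bar{a}^1,\bar{a}^2)$, Remark \ref{rem:lsdjfld} tells us that $v,w$ satisfy \eqref{eq:ddlpp} at level $t=3$, so Lemma \ref{lem:bangert11} furnishes
\begin{equation*}
\int_{V_3^{\perp}\times (V_3/\Lambda_3)}(w-v)\,\ud x\leq 1,
\end{equation*}
which is exactly what replaces \eqref{eq:dkkldldl} one level up. Setting
\begin{equation*}
\Gamma_1(v,w):=\{u\in W^{1,2}_{loc}(\R^n)\,|\, v\leq u\leq w,\ \MT_{\bar{\bd{k}}}u=u \textrm{ for } \bar{\bd{k}}\in\bar{\Lambda}_3\}
\end{equation*}
and $J_1(u):=\lim_{\bd{p}\to-\infty,\bd{q}\to\infty} J_{1;\bd{p},\bd{q}}(u)$ where now the cubes are summed against a fundamental domain of $V_3^{\perp}\times (V_3/\Lambda_3)$, I would re-run the estimates of Proposition \ref{prop:2.8} verbatim, using Moser's gradient bound (Lemma \ref{lem:moser}) on $v\in\MMM_{\alpha}$ to control the boundary-flux term. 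This yields a uniform lower bound $J_{1;\bd{p},\bd{q}}(u)\ge -K_1$ and the well-definedness of $J_1$. The analogues of Propositions \ref{prop:2.241}, \ref{prop:dd}, \ref{prop:2.50}, \ref{prop:2.64} and \ref{prop:2.2} then go through with only notational changes, producing $c_1=0$ and
\begin{equation*}
\MM_1(v,w)=\MMM(\bar{a}^1,\bar{a}^2)\cap \Gamma_1(v,w).
\end{equation*}

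For the reverse inclusion, take any $u\in\MMM(\bar{a}^1,\bar{a}^2)\setminus\MMM^{rec}(\bar{a}^1,\bar{a}^2)$. Since $u$ is WSI, Lemma \ref{lem:dlffdf} tells us $\MMM^{rec}(\bar{a}^1,\bar{a}^2)\cup\{u\}$ is totally ordered. Defining
\begin{equation*}
v:=\sup\{\varphi\in\MMM^{rec}(\bar{a}^1,\bar{a}^2)\mid \varphi\le u\},\qquad w:=\inf\{\varphi\in\MMM^{rec}(\bar{a}^1,\bar{a}^2)\mid \varphi\ge u\},
\end{equation*}
which exist because $\MMM^{rec}(\bar{a}^1,\bar{a}^2)$ is closed under pointwise monotone limits (Lemma \ref{lem:dldqwloplo}), the assumption $u\notin\MMM^{rec}(\bar{a}^1,\bar{a}^2)$ forces $v<w$ to be an adjacent pair, i.e.\ $\langle v,w\rangle\in\mathcal{A}_2$. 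Since $u$ is $\bar{\Lambda}_3$-invariant (being in $\MMM(\bar{a}^1,\bar{a}^2)$), one has $u\in\Gamma_1(v,w)$, and so by the characterization above $u\in\MM_1(v,w)$, completing the proof.

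The main obstacle is technical rather than conceptual: verifying that the lower-bound estimate in the analogue of Proposition \ref{prop:2.8} and the truncation/no-loss-of-mass arguments (\eqref{eq:621644} and Proposition \ref{prop:dd}) still work when the integration domain $V_3^{\perp}\times (V_3/\Lambda_3)$ is higher-dimensional and genuinely non-compact. The linchpin is precisely that Lemma \ref{lem:bangert11} at level $t=3$ delivers $L^1$ (and therefore $L^2$) summability of $w-v$ on that domain, so the boundary-flux term in the calculation of Proposition \ref{prop:2.8} remains uniformly bounded via Lemma \ref{lem:moser}, and the cutoff functions used in the proofs of Propositions \ref{prop:dd} and \ref{prop:2.50} still converge to the desired limits.
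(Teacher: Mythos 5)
Your proposal is correct and follows essentially the same route as the paper: the paper's own treatment of Theorem \ref{thm:dlfjldfjldgf} is precisely to repeat the Section \ref{sec:3.2} program one level deeper (Lemma \ref{lem:bangert11} via Remark \ref{rem:lsdjfld}, the new $J_1$ on $\bar{\Lambda}_3$-periodic functions in the gap of an adjacent pair of $\MMM^{rec}(\bar{a}^1,\bar{a}^2)$, and the ordering argument from Lemma \ref{lem:dlffdf} to place any non-recurrent element into such a gap), which is exactly what you do, and in fact you spell out more of the details than the paper does.
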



\subsection{The second renormalized  functional for the case $rank(\bar{\Lambda}_3)=0$}
\

Proceeding as in Section \ref{sec:4.3}
and using the same idea of
Section \ref{sec:3.3},
we obtain the second renormalized  functional for the case $rank(\bar{\Lambda}_3)=0$.

\bigskip

For higher dimensions,
if more gap conditions are given,
we have more basic heteroclinic solutions.
Moreover, more complicated heteroclinic and homoclinic solutions can be constructed as in
\cite{RS, Rabi2006, Rabi2007, Rabi2011, Rabi2014}.
The details for establishing these solutions will not be given anymore.


\subsection*{Acknowledgments}
The author is greatly indebted to
Professor Zhi-Qiang Wang (Utah State University) 
for his active interest, valuable encouragements and helpful discussions 
during the preparation of the paper. 
The author is supported by NSFC: 12201162.


\end{CJK*}
\end{document}